\newcommand{\BibTeX}{{\scshape Bib}\kern-.0em\TeX}
\newcommand{\T}{\S\kern .0em\relax }
\newcommand{\AMS}{$\mathcal{A}$\kern-.0em\lower.0ex\hbox
        {$\mathcal{M}$}\kern-.0em$\mathcal{S}$}
\def\ng{\mathrm{ng}}
\def\irr{\mathrm{irr}}
\def\LA{\mathrm{LA}}
\def\D{\mathrm{D}}
\def\calL{\mathcal{L}}
\def\SA{\mathscr{A}}
\def\SS{\mathscr{S}}
\def\SL{\mathscr{L}}
\def\SC{\mathscr{C}}
\def\SD{\mathscr{D}}
\def\SO{\mathscr{O}}
\def\rig{\mathrm{rig}}
\def\an{\mathrm{an}}
     \newcommand{\BN}{{\mathbb {N}}}
    \newcommand{\BQ}{{\mathbb {Q}}} \newcommand{\BR}{{\mathbb {R}}}
     \newcommand{\BZ}{{\mathbb {Z}}}
     \newcommand{\RB}{{\mathrm {B}}}
     \newcommand{\RD}{{\mathrm {D}}}
     \newcommand{\RP}{{\mathrm {P}}}
     \newcommand{\RT}{{\mathrm {T}}}
     \newcommand{\RZ}{{\mathrm {Z}}}
    \newcommand{\Gal}{{\mathrm{Gal}}} \newcommand{\GL}{{\mathrm{GL}}}
    \newcommand{\Hom}{{\mathrm{Hom}}} 
    \newcommand{\Ind}{{\mathrm{Ind}}}
    \newcommand{\Res}{{\mathrm{Res}}}
\newcommand{\cris}{\mathrm{cris}}
\newcommand{\wvec}[4]{{\scriptsize{\big [ \!\!
    \begin{array}{cc} #1 \!\!\! & \!\!\! #2 \\ #3 \!\!\! & \!\!\! #4 \end{array} \!\! \big ] }}}
\def\res{\mathrm{res}}
\def\Rep{\mathrm{Rep}}
\def\LP{\mathrm{LP}}
\def\SE{\mathscr{E}}
\def\SR{\mathscr{R}}
\def\SO{\mathscr{O}}
\def\limproj{\mathop{\oalign{{\rm lim}\cr
\hidewidth$\longleftarrow$\hidewidth\cr}}}
     \newcommand{\Sym}{{\mathrm{Sym}}}
    \newcommand{\st}{{\mathrm{st}}}
    \newcommand{\ra}{\rightarrow}
    \theoremstyle{plain}
    \newtheorem{thm}{Theorem}[section] \newtheorem{cor}[thm]{Corollary}
    \newtheorem{lem}[thm]{Lemma}  \newtheorem{prop}[thm]{Proposition}
    \newtheorem {conj}[thm]{Conjecture}
    \theoremstyle{definition}
    \theoremstyle{remark}
    \newtheorem {rem}[thm]{Remark}
    \newtheorem {example}[thm]{Example}
    \numberwithin{equation}{section}
 \newcommand{\binc}[2]{ \bigg (\!\! \begin{array}{c} #1\\
    #2 \end{array}\!\! \bigg )}
\begin{document}
\title{Locally analytic vectors of unitary principal series of $\GL_2(\BQ_p)$ }

\author{Ruochuan Liu\\ University of Michigan, Ann Arbor\\ ruochuan@umich.edu\\  \\
Bingyong Xie\\ East China Normal University, Shanghai\\byxie@math.ecnu.edu.cn\\  \\
Yuancao Zhang\\ Peking University, Beijing\\zhangyc@math.pku.edu.cn}
\maketitle

\begin{abstract}
The $p$-adic local Langlands correspondence for
$\GL_2(\BQ_p)$ attaches to any $2$-dimensional irreducible $p$-adic representation $V$ of
$G_{\BQ_p}$ an admissible unitary
representation $\Pi(V)$ of $\GL_2(\BQ_p)$. The unitary principal series of $\GL_2(\BQ_p)$
are those $\Pi(V)$ corresponding to trianguline
representations. In this article, for $p>2$, using the machinery of Colmez,
we determine the space of locally analytic vectors $\Pi(V)_\an$ for all non-exceptional
unitary principal series $\Pi(V)$ of $\GL_2(\BQ_p)$
by proving a conjecture of Emerton.
\end{abstract}

\tableofcontents

\section{Introduction}

Let $F$ be a finite extension of $\BQ_p$. The aim of the $p$-adic local Langlands programme initiated by Breuil is to look for a ``natural'' correspondence between certain $n$-dimensional $p$-adic representations of $\mathrm{Gal}(\overline{\BQ}_p/F)$ and certain Banach space representations of $\GL_n(F)$. Thanks to much recent work, especially that of Colmez and Pa\v{s}k\={u}nas, we have gained a fairly clear picture in the case $F=\BQ_p$ and $n=2$ which is the so-called $p$-adic local Langlands correspondence for $\GL_2(\BQ_p)$ establishing a functorial bijection between $2$-dimensional
irreducible $p$-adic representations of $\mathrm{Gal}(\overline{\BQ}_p/\BQ_p)$ and non-ordinary irreducible admissible unitary representations of $\GL_2(\BQ_p)$.

Although the present version of $p$-adic local Langlands correspondence is formulated at the level of Banach space representations, it is very useful, as in Breuil's initial work \cite{Breuil-DS}, to have information of the subspace of locally analytic vectors. Fix a finite extension $L$ of $\BQ_p$ as the coefficient field, and we denote by $\Pi(V)$
the corresponding unitary representation of $\GL_2(\BQ_p)$ for any $2$-dimensional
irreducible $L$-linear representation $V$ of $\mathrm{Gal}(\overline{\BQ}_p/\BQ_p)$.
The \emph{unitary principal series of $\GL_2(\BQ_p)$}, which are the simplest ones among these $\Pi(V)$,
are those corresponding to trianguline representations. In \cite{Emerton-lg}, Emerton made a conjectural description of the subspace of locally analytic vectors $\Pi(V)_\an$ for all unitary principal series $\Pi(V)$. We recall his conjecture below.

Let $\SS_{\mathrm{irr}}$ be the parameterizing space of $2$-dimensional irreducible trianguline representations of $\mathrm{Gal}(\overline{\BQ}_p/\BQ_p)$ introduced by Colmez in \cite{Colmez08}. For any $s\in\SS_{\mathrm{irr}}$, let $V(s)$ be the corresponding trianguline representation. We may write $s=(\delta_1,\delta_2,\SL)$ so that the \'etale $(\varphi,\Gamma)$-module $\D_{\rig}(V(s))$ is isomorphic to the extension of $\SR(\delta_2)$ by $\SR(\delta_1)$ defined by $\SL$. For any such $s$, if $\delta_1\delta_2^{-1}=x^k|x|$ for some $k\in\BZ_{+}$, then we set $\Sigma(s)$ to be the locally analytic $\GL_2(\BQ_p)$-representation $\Sigma(k+1,\SL)\otimes((\delta_2|x|^{\frac{2-k}{2}})\circ\det)$ where $\{\Sigma(k+1,\SL)\}$ is the family of locally analytic $\GL_2(\BQ_p)$-representations introduced by Breuil in \cite{Breuil-DS}. Otherwise, we define $\Sigma(s)$ to be the locally analytic principal series
$\left(\Ind^{\GL_2(\BQ_p)}_{\mathrm{B}(\BQ_p)}\delta_2\otimes\delta_1(x|x|)^{-1}\right)
_\an$. The conjecture of Emerton is:

\begin{conj}$($\cite[Conjecture 6.7.3, 6.7.7]{Emerton-lg}$)$ \label{conj:Emerton-intro}
 For any $s\in\SS_{\irr}$, $\Pi(V(s))_\an$ sits in an
exact sequence
\begin{equation} \label{eq:Emerton-conj-intro}
 0 \longrightarrow \Sigma(s) \longrightarrow\Pi(V(s))_\an \longrightarrow \left(\Ind^{\GL_2(\BQ_p)}_{\mathrm{B}(\BQ_p)}\delta_1\otimes\delta_2(x|x|)^{-1}\right)_\an\longrightarrow 0.
\end{equation}
\end{conj}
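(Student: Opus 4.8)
The plan is to follow Colmez's machinery relating the $p$-adic local Langlands correspondence to $(\varphi,\Gamma)$-modules. Given $s=(\delta_1,\delta_2,\SL)\in\SS_{\irr}$, write $D=\D_{\rig}(V(s))$, an extension of $\SR(\delta_2)$ by $\SR(\delta_1)$. The starting point is Colmez's functor attaching to $V(s)$ (equivalently to $D$) the $\GL_2(\BQ_p)$-Banach representation $\Pi(V(s))$, together with the description of $\Pi(V(s))_\an$ in terms of the space $D^{\natural}\boxtimes\BP^1$ (or rather its locally analytic avatar $D_{\rig}\boxtimes\BP^1$) equipped with its $\GL_2(\BQ_p)$-action. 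The key exact sequence to exploit is the $\GL_2(\BQ_p)$-equivariant one
\begin{equation}
 0\longrightarrow \Pi(V(s))_\an^{*}\otimes\delta_V \longrightarrow D_{\rig}\boxtimes\BP^1\longrightarrow \Pi(V(s))_\an\longrightarrow 0,
\end{equation}
where $\delta_V=\delta_1\delta_2$ is the central character twist. So the first step is to recall these constructions precisely and reduce the conjecture to identifying the $\GL_2$-subquotients of $D_{\rig}\boxtimes\BP^1$.

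Second, I would use the triangulation of $D$ to produce a filtration on $D_{\rig}\boxtimes\BP^1$. The sub-$(\varphi,\Gamma)$-module $\SR(\delta_1)\subset D$ and the quotient $\SR(\delta_2)$ each give rise to pieces $\SR(\delta_i)\boxtimes\BP^1$, but these are $(\varphi,\Gamma)$-module constructions for characters, and Colmez (and Breuil for the locally algebraic part) computed the corresponding locally analytic $\GL_2(\BQ_p)$-representations: one obtains locally analytic principal series $\bigl(\Ind^{\GL_2(\BQ_p)}_{\mathrm{B}(\BQ_p)}\delta_2\otimes\delta_1(x|x|)^{-1}\bigr)_\an$ and its ``dual'' partner, except in the special case $\delta_1\delta_2^{-1}=x^k|x|$ where the principal series is reducible and the relevant piece is Breuil's $\Sigma(k+1,\SL)$ (this is exactly where $\SL$ enters, via the extension class). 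Concretely, I expect the argument to run: apply $-\boxtimes\BP^1$ to $0\to\SR(\delta_1)\to D\to\SR(\delta_2)\to 0$, take the resulting four-term diagram of $\boxtimes\BP^1$'s, and match the sub/quotient structure against the two principal series on the ``analytic'' side and against $\Sigma(s)$ on the ``dual'' side, using that $\Sigma(s)^{*}$ sits inside $\SR(\delta_1)\boxtimes\BP^1$ (suitably twisted).

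Third — and this is where the main work lies — I would need to prove that the extension class in the short exact sequence \eqref{eq:Emerton-conj-intro} is the correct one and in particular that the sequence does not split (or splits exactly in the expected degenerate cases), and that no locally analytic vectors are lost or gained in passing from the Banach completion back to $D_{\rig}\boxtimes\BP^1$. The non-splitting and the precise identification of the middle term require controlling $\Ext^1$ groups in the category of locally analytic $\GL_2(\BQ_p)$-representations between the relevant principal series, and comparing with $\Ext^1$ computations for $(\varphi,\Gamma)$-modules over the Robba ring; Colmez's dictionary translates one into the other, but the bookkeeping of twists $(\delta_1,\delta_2)\leftrightarrow(\delta_2,\delta_1)$, the factor $x|x|$, and the central character is delicate. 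The main obstacle I anticipate is handling the boundary/exceptional locus — when $\delta_1/\delta_2$ or $\delta_2/\delta_1$ is of the form $x^k$ or $x^k|x|$ — since there the principal series become reducible, the functor $D\mapsto \Pi(V)$ behaves less uniformly, and one must check that Breuil's $\Sigma(k+1,\SL)$ glues correctly; the hypothesis ``non-exceptional'' in the paper's title presumably is exactly what is needed to sidestep the worst of these, but even so the $k\in\BZ_+$ case with $\delta_1\delta_2^{-1}=x^k|x|$ must be treated with care.

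Finally, I would assemble the pieces: the filtration of $D_{\rig}\boxtimes\BP^1$ together with the dictionary gives the two-step structure, Colmez's short exact sequence converts ``half'' of $D_{\rig}\boxtimes\BP^1$ into $\Pi(V(s))_\an$, and matching the graded pieces yields precisely \eqref{eq:Emerton-conj-intro}. A final check is that the map $\Pi(V(s))_\an\to\bigl(\Ind^{\GL_2(\BQ_p)}_{\mathrm{B}(\BQ_p)}\delta_1\otimes\delta_2(x|x|)^{-1}\bigr)_\an$ is surjective, which should follow from the corresponding surjectivity $D_{\rig}\boxtimes\BP^1\twoheadrightarrow \SR(\delta_2)\boxtimes\BP^1$ on the $(\varphi,\Gamma)$-side combined with right-exactness of the relevant functors.
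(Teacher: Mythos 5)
Your high-level strategy — filter $D_{\rig}\boxtimes\BP^1$ by the triangulation of $D_{\rig}(V(s))$ and match the graded pieces against principal series and $\Sigma(s)$ — is exactly the shape of the paper's argument. But the step you describe as ``the main work,'' namely pinning down which extension occurs, is where your proposal diverges and where the gap lies. You propose to control $\Ext^1$ groups in the category of locally analytic $\GL_2(\BQ_p)$-representations and compare them with $\Ext^1$ for $(\varphi,\Gamma)$-modules over $\SR$. This is not what the paper does, and it is unclear it could be carried out: those locally analytic $\Ext^1$ spaces are not readily computable, and Emerton's conjecture anyway only asserts the existence of the short exact sequence, not a particular extension class, so no $\Ext^1$ bookkeeping is needed. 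The actual mechanism in the paper is the duality $\{\cdot,\cdot\}_{\BP^1}$ and the orthogonality statement that $D^\natural_{\rig}(s)\boxtimes\BP^1$ and $D^\natural_{\rig}(\check s)\boxtimes\BP^1$ are mutual orthogonal complements inside $D_{\rig}\boxtimes\BP^1\times D_{\rig}(\check s)\boxtimes\BP^1$ (Proposition \ref{prop:Drig-orthogonal}, together with Lemma \ref{lem:orthogonal}); combined with the commutative diagram of Proposition \ref{prop-comm-diag}, this lets one read off both the image $j_{\BP^1}(D^\natural_{\rig}(s)\boxtimes\BP^1)$ and the intersection $\SR(\delta_1)\boxtimes\BP^1\cap D^\natural_{\rig}(s)\boxtimes\BP^1$, with a codimension-$k$ discrepancy relative to $\SR^+(\delta_i)\boxtimes\BP^1$ exactly when $\delta_s=x^{k-1}$, and then one dualizes.

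There are two further ingredients you elide that are not formal. First, the short exact sequence $0\to\SR(\delta_1)\boxtimes\BP^1\to D_{\rig}\boxtimes\BP^1\to\SR(\delta_2)\boxtimes\BP^1\to 0$ is \emph{not} obtained by ``right-exactness of the relevant functors'': the construction of $\boxtimes\BP^1$ involves the involution $w_\delta$, and one must prove that $j$ and $i$ intertwine $w_{D}$ with $w_{\delta_s}$. In the paper this requires the explicit Amice-transform isomorphism $\mathscr{A}_s:\Pi(\check s)^*\stackrel{\sim}{\to}D^\natural(s)\boxtimes\BP^1$ from \cite{Colmez-principal,BB} (Theorem \ref{thm:Amice-rank2}), which is precisely where the hypotheses $p>2$ and non-exceptional enter and which you never invoke. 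Second, the identification $\SR^+(\eta)\boxtimes_\delta\BP^1\cong\widetilde\Sigma(\eta^{-1}\epsilon,\delta^{-1}\eta)^*$ (Proposition \ref{prop:map-A}) is a genuine construction the paper has to build — Colmez's $\boxtimes\BP^1$ machinery is set up only for \'etale rank-$2$ modules, and extending it to rank-$1$ $(\varphi,\Gamma)$-modules and matching the result with the dual of a locally analytic principal series is one of the paper's contributions, not something one can cite. Without these two pieces and the duality argument replacing your $\Ext^1$ plan, the proof does not close.
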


In the special cases when $V(s)$ are twists of crystabelian representations and \emph{non-exceptional}, there is a more precise conjectural description of $\Pi(V(s))_\an$ due to Breuil. In \cite{Liu}, the first author proved Breuil's conjecture. The main result of this paper is:
\begin{thm}\label{thm:Emerton-intro} $(\mathrm{Theorem}\ \ref{thm:emerton})$
For $p>2$, Conjecture \ref{conj:Emerton-intro} is true if $V(s)$ is non-exceptional.
\end{thm}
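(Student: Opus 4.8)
The plan is to realize $\Pi(V(s))_\an$ explicitly through Colmez's Montreal functor machinery and then identify its constituents with the representations appearing in Emerton's sequence. First I would recall Colmez's construction: for $V=V(s)$ with $\D=\D_{\rig}(V)$, the representation $\Pi(V)$ is built from a $\GL_2(\BQ_p)$-equivariant sheaf on $\BP^1(\BQ_p)$ whose sections over $\BZ_p$ are (essentially) $\D$ itself, and whose global sections $\D\boxtimes\BP^1$ sit in a $\GL_2(\BQ_p)\times$(duality)-equivariant exact sequence $0\to\Pi(V)^*\otimes\chi\to\D\boxtimes\BP^1\to\Pi(V)\to 0$, where $\chi$ records the central character. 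Passing to locally analytic vectors commutes with the relevant functors on the level of $\m$-modules over the Robba ring, so the computation of $\Pi(V)_\an$ reduces to understanding $\D_{\rig}(V)\boxtimes\BP^1$ and its locally analytic analogue $\D_{\rig}(V)^{\la}\boxtimes\BP^1$; this is where the triangulation $0\to\SR(\delta_1)\to\D_{\rig}(V)\to\SR(\delta_2)\to 0$ enters.

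The second step is to use the triangulation to produce the filtration on $\Pi(V(s))_\an$. Applying Colmez's $\boxtimes\BP^1$ construction to the sub-object $\SR(\delta_1)$ and the quotient $\SR(\delta_2)$ gives, by functoriality, a two-step filtration on $\D_{\rig}(V)\boxtimes\BP^1$ whose graded pieces are computed from rank-one $\m$-modules over the Robba ring. For a rank-one $\SR(\delta)$, Colmez identifies the associated $\GL_2(\BQ_p)$-representation as (a subquotient related to) a locally analytic principal series or, in the ``special'' case $\delta=x^k|x|$, a Breuil representation $\Sigma(k+1,\SL)$; the bookkeeping of the two characters $\delta_1,\delta_2$ and the normalizing twist by $(\delta_2|x|^{(2-k)/2})\circ\det$ or by $\delta_i\otimes\delta_j(x|x|)^{-1}$ is exactly what yields the terms $\Sigma(s)$ and $\bigl(\Ind^{\GL_2(\BQ_p)}_{\RB(\BQ_p)}\delta_1\otimes\delta_2(x|x|)^{-1}\bigr)_\an$ in \eqref{eq:Emerton-conj-intro}. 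The non-exceptionality hypothesis is used here to guarantee that the relevant Ext-groups and irreducibility statements behave as expected — in particular that the locally analytic principal series involved are irreducible or have the expected socle/cosocle, and that the extension class does not degenerate — so that the sequence is genuinely short exact rather than having extra constituents.

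The third step is to actually match the output of the $\boxtimes\BP^1$ computation with $\Pi(V(s))_\an$ rather than with $\D_{\rig}(V)\boxtimes\BP^1$ or its dual piece: one must extract, from the four-term exact sequence relating $\D\boxtimes\BP^1$, $\Pi(V)$ and $\Pi(V)^*$, precisely the contribution that lands in $\Pi(V)_\an$. This is done by taking locally analytic vectors throughout, using that $(\Pi(V)^*)_\an = ((\Pi(V)_\an)')_b$-type dualities interact correctly with the Robba-ring side, and then reading off that the sub $\Sigma(s)$ comes from the $\SR(\delta_1)$-part while the quotient principal series comes from the $\SR(\delta_2)$-part (after the duality-induced swap $\delta_i\mapsto \delta_i(x|x|)^{-1}$, which accounts for why $\delta_1$ rather than $\delta_2$ labels the quotient). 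Since Breuil's conjecture in the crystabelian non-exceptional case is already known by \cite{Liu}, I would invoke that as the base case and extend to the general non-exceptional trianguline parameter by a density/deformation argument in the space $\SS_{\irr}$: the formation of $\Pi(V(s))_\an$ varies analytically in $s$, the graded terms $\Sigma(s)$ vary in a controlled family (Breuil's family $\Sigma(k+1,\SL)$ specializes correctly), so exactness over a Zariski-dense set of crystabelian points propagates.

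The main obstacle I anticipate is the third step — controlling the passage to locally analytic vectors through Colmez's $\BP^1$-construction and verifying that no spurious constituents appear. Concretely, one must show that $\bigl(\D_{\rig}(V)\boxtimes\BP^1\bigr)$ has locally analytic vectors computed by the naive Robba-ring analogue (a statement about overconvergence and analyticity of the $\GL_2$-action on the global sections), and that the induced filtration stays short exact after this functor; the degeneration of extension classes at exceptional parameters is precisely the phenomenon the hypothesis excludes, so the heart of the argument is a careful case analysis of the rank-one pieces $\SR(\delta_i)$ and the Ext$^1$ between the resulting principal series, checking that ``non-exceptional'' is exactly the condition making Emerton's three-term sequence exact with no extra pieces.
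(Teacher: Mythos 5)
Your opening two steps follow the same general road as the paper: use Colmez's formula $(\Pi(\check{V})_\an)^*\cong D_\rig^\natural\boxtimes\RP^1$, and attempt to propagate the triangulation $0\to\SR(\delta_1)\to\D_\rig(V)\to\SR(\delta_2)\to 0$ through the $\boxtimes\RP^1$ construction. But from there you diverge, and the divergences are where the real difficulties live.

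First, ``by functoriality'' does not give you the exact sequence $0\to\SR(\delta_1)\boxtimes\RP^1\to D(s)\boxtimes\RP^1\to\SR(\delta_2)\boxtimes\RP^1\to 0$. The issue is that the involution $w_\delta$ defining $\boxtimes\RP^1$ is built from a limit (formula (\ref{eq:wD})) that does not obviously converge on $D^\dagger\boxtimes\BZ_p^\times$, and there is no a priori reason the projection $j\colon D(s)\to\SR(\delta_2)$ should intertwine $w_D$ with $w_{\delta_s}$. The paper establishes this (Corollary \ref{prop:exact-sq-right}) only by using the explicit isomorphism $\SA_s\colon\Pi(\check s)^*\to D^\natural(s)\boxtimes\RP^1$ of Colmez/Berger--Breuil, pushing elements of $D^\natural(s)\boxtimes\RP^1$ down to the principal-series side and comparing with the concretely computable $w$-action on distributions (Lemma \ref{lem:w=wdelta}). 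Dually one then gets the well-definedness of $i_{\RP^1}$. Your proposal does not engage with this convergence/compatibility issue at all, so the filtration you want to ``apply'' has not been constructed.

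Second, and more seriously, you are missing the key mechanism that lets one locate $D^\natural_\rig(s)\boxtimes\RP^1$ inside the exact sequence. The heart of the paper is the orthogonality result (Proposition \ref{prop:Drig-orthogonal}): $D^\natural_\rig(s)\boxtimes\RP^1$ and $D^\natural_\rig(\check s)\boxtimes\RP^1$ are exact orthogonal complements under $\{\cdot,\cdot\}_{\RP^1}$. Combined with the self-orthogonality of $\SR^+$ inside $\SR$ (Lemma \ref{lem:SR+orthogonal}), with the duality between $i_{\RP^1}$ and $j_{\RP^1}$, and with a Hahn--Banach/open-mapping argument (Lemma \ref{lem:orthogonal}), this is what identifies $j_{\RP^1}(D^\natural_\rig(s)\boxtimes\RP^1)$ and $\SR(\delta_1)\boxtimes\RP^1\cap D^\natural_\rig(s)\boxtimes\RP^1$ precisely --- equal to $\SR^+(\delta_i)\boxtimes\RP^1$ generically, and of codimension $k$ inside/outside it when $\delta_s=x^{k-1}$ (Proposition \ref{prop:analytic-vector-0}). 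Your phrase ``reading off that the sub $\Sigma(s)$ comes from the $\SR(\delta_1)$-part'' compresses all of this into a non-argument; nothing in your sketch rules out, for instance, that the intersection of $D^\natural_\rig(s)\boxtimes\RP^1$ with $\SR(\delta_1)\boxtimes\RP^1$ is strictly larger or smaller than $\SR^+(\delta_1)\boxtimes\RP^1$.

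Third, the density/deformation fallback from the crystabelian case would not rescue the argument. Passage to locally analytic vectors is precisely the operation that is most sensitive to specialization: the dimension of the locally algebraic subspace of $\Pi(V(s))_\an$ jumps at classical (crystabelian) points, and the extension structure of $\Pi(V(s))_\an$ is exactly what the conjecture is trying to pin down. Asserting that ``$\Pi(V(s))_\an$ varies analytically in $s$'' and that exactness propagates from a Zariski-dense set presupposes a family version of the statement being proved. The paper makes no such argument, and for good reason: it proves the result pointwise via the orthogonality/duality computation sketched above, which is uniform over all non-exceptional $s\in\SS_\irr$. The non-exceptionality hypothesis enters not through irreducibility of principal series or Ext degeneration, as you suggest, but because it is the hypothesis under which Theorem \ref{thm:Amice-rank2} (the explicit isomorphism $\SA_s$) is available.
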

\noindent
In fact, one can easily deduce Breuil's conjecture from Emerton's conjecture. Thus for $p>2$, the above theorem covers the aforementioned result of the first author.

We now explain the strategy of the proof of Theorem \ref{thm:Emerton-intro}. The whole proof builds on Colmez's machinery of $p$-adic local Langlands correspondence for $\GL_2(\BQ_p)$ developed in \cite{Colmez-Langlands}. The key ingredient is Colmez's identification of the locally analytic vectors:
\begin{equation}\label{eq:formula-analytic}
(\Pi(\check{V})_\an)^*=D_\rig^\natural\boxtimes\RP^1
\end{equation}
where $D=\D(V)$ is Fontaine's \'etale $(\varphi,\Gamma)$-module associated to $V$. To apply this formula, for any continuous characters $\delta, \eta:\BQ_p^\times\ra L^\times$, we construct the objects $\SR(\eta)\boxtimes_\delta\RP^1$ and $\SR^+(\eta)\boxtimes_\delta\RP^1$ which are equipped with continuous $\GL_2(\BQ_p)$-actions, and the latter is topologically isomorphic to $(\left(\Ind^{\GL_2(\BQ_p)}_{\mathrm{B}(\BQ_p)}\delta^{-1}\eta\otimes\eta^{-1}\right)
_\an)^*$. In doing so, we are led to modify some of Colmez's constructions to twists of \'etale $(\varphi,\Gamma)$-modules and rank 1 $(\varphi,\Gamma)$-modules. On the other hand, Colmez \cite{Colmez-principal} (for $p>2$ and $s\in \SS_*^{\mathrm{ng}}\coprod\SS_{*}^{\mathrm{st}}$; this is the only place where we need $p>2$) and Berger-Breuil \cite{BB} (for $s\in\SS_\irr$ non-exceptional) establish an explicit isomorphism $\SA_s:\Pi(V(s))\cong\Pi(s)$ (for $s$ exceptional, Paskunas proves $\Pi(V(s))\cong\Pi(s)$ by an indirect method \cite{P09}) where $\Pi(s)$ is the unitary principal series associated to $V(s)$.  We deduce from the explicit description of $\SA_s$ plus a duality argument the following exact sequence

\begin{equation}\label{eq:exact-intro}
0\longrightarrow\SR(\delta_1) \boxtimes\RP^1 \longrightarrow
\D_\rig(V(s))\boxtimes \RP^1 \longrightarrow
\SR(\delta_2)\boxtimes\RP^1\longrightarrow 0.
\end{equation}
Then the natural inclusion $\left(\Ind^{\GL_2(\BQ_p)}_{\mathrm{B}(\BQ_p)}\delta_2\otimes\delta_1(x|x|)^{-1}\right)
_\an\hookrightarrow\Pi(s)_\an$ induces the following commutative diagram
\begin{equation} \label{eq:comm-diag-intro}
\xymatrix{
(\Pi(s)_\an)^* \ar[r] \ar[d] &
D^\natural_\rig(\check{s})\boxtimes \RP^1 \ar[d]\\
(\left(\Ind^{\GL_2(\BQ_p)}_{\mathrm{B}(\BQ_p)}\delta_2\otimes\delta_1(x|x|)^{-1}\right)
_\an)^*
\ar[r] & \SR(\check{\delta}_1)\boxtimes\RP^1}
\end{equation}
where $\check{s}=(\check{\delta}_2,\check{\delta}_1,\SL)$ corresponds to the Tate dual of $V(s)$. Using (\ref{eq:comm-diag-intro}) together with the fact that $D^\natural_\rig(s)\boxtimes \RP^1$ and $D^\natural_\rig(\check{s})\boxtimes \RP^1$ are orthogonal complements of each other under the paring $$\{\cdot,\cdot\}_{\RP^1}:\D_\rig(V(s))\boxtimes \RP^1\times\D_\rig(V(\check{s}))\boxtimes \RP^1\ra L,$$
and that (\ref{eq:exact-intro}) is dual to
\begin{equation}\label{eq:dual-exact-intro}
0\longrightarrow\SR(\check{\delta}_2) \boxtimes\RP^1 \longrightarrow
\D_\rig(V(\check{s}))\boxtimes \RP^1\longrightarrow
\SR(\check{\delta}_1)\boxtimes\RP^1\longrightarrow 0,
\end{equation}
we deduce that if $\delta_1\delta_2^{-1}\neq x^k|x|$ for any $k\in\BZ_+$, then $D^\natural_\rig(\check{s})$ sits in the exact sequence
\begin{equation}\label{eq:analytic-exact-intro}
0\longrightarrow\SR^+(\check{\delta}_2) \boxtimes\RP^1 \longrightarrow
D^\natural_\rig(\check{s})\boxtimes \RP^1 \longrightarrow
\SR^+(\check{\delta}_1)\boxtimes\RP^1\longrightarrow 0.
\end{equation}
We therefore conclude (\ref{eq:Emerton-conj-intro}) by taking the dual of (\ref{eq:analytic-exact-intro}). If $\delta_1\delta_2^{-1}=x^k|x|$ for some $k\in\BZ_+$, we have that the image of $D^\natural_\rig(\check{s})$ in $\SR(\check{\delta}_1)\boxtimes\RP^1$ is a closed subspace of $\SR^+(\check{\delta}_1)\boxtimes\RP^1$ of codimension $k$ and $\SR(\check{\delta}_2) \boxtimes\RP^1\cap
D^\natural_\rig(\check{s})\boxtimes \RP^1$ contains $\SR^+(\check{\delta}_2) \boxtimes\RP^1$ as a closed subspace of codimension $k$. We then conclude (\ref{eq:Emerton-conj-intro}) using Schneider and Teitelbaum's results on the Jordan-H\"{o}lder series of locally analytic principal series of $\GL_2(\BQ_p)$.

The organization of the paper is as follows. In \S2, we recall some necessary background of the theory of $(\varphi,\Gamma)$-modules. In \S3, we recall some of Colmez's constructions of the $p$-adic local Langlands correspondence for $\GL_2(\BQ_p)$ especially his identification of the locally analytic vectors, and we make the aforementioned modification. We review the isomorphism $\SA_s:\Pi(V(s))\cong\Pi(s)$ in \S4. In \S5.1, we recall Schneider and Teitelbaum's results on Jordan-H\"{o}lder series of the locally analytic principal series of $\GL_2(\BQ_p)$. We prove that  $\SR^+(\eta)\boxtimes_\delta\RP^1$ is isomorphic to $(\left(\Ind^{\GL_2(\BQ_p)}_{\mathrm{B}(\BQ_p)}\delta^{-1}\eta\otimes\eta^{-1}\right)
_\an)^*$ in \S5.2.  Section 6 is devoted to the proof of Theorem \ref{thm:Emerton-intro}. We first recall the definition of $\Sigma(s)$ and restate Emerton's conjecture in  \S6.1. Then we prove the exact sequence (\ref{eq:exact-intro}) in \S6.2. We finish the proof of Theorem \ref{thm:Emerton-intro} in \S6.3.

After the work presented in this paper was finished, we learned from Colmez that he had a proof of Conjecture \ref{conj:Emerton-intro} for all $p$ and all trianguline representations of $G_{\BQ_p}$. The strategy of his proof is different from ours. He constructs the map $\Pi(s)_\an\ra\left(\Ind^{\GL_2(\BQ_p)}_{\mathrm{B}(\BQ_p)}\delta_1\otimes\delta_2(x|x|)^{-1}\right)_\an$ directly by computing \emph{module de Jacquet dual} of $\Pi(s)_\an$. We refer the reader to his paper \cite{Colmez-analytic} for more details.

\section*{Notation and conventions}
Let $p$ be a prime number, and let $v_p$ denote the $p$-adic valuation on $\overline{\mathbb{Q}}_p$, normalized by $v_p(p)=1$; the corresponding norm is denoted by $|\cdot|$. Let $G_{\BQ_p}$ denote $\mathrm{Gal}(\overline{\BQ}_p/\BQ_p)$ for simplicity. Let $\chi:G_{\BQ_p}\ra\BZ_p^\times$ be the $p$-adic cyclotomic character. The kernel of $\chi$ is $H=\Gal(\overline{\BQ}_p/\BQ_p(\mu_{p^\infty}))$, and let $\Gamma=\Gal(\BQ_p(\mu_{p^\infty})/\BQ_p)$. For
any $a\in\BZ_p^\times$, let $\sigma_a$ be the unique element in $\Gamma$ such
that $\chi(\sigma_a)=a$. For any positive integer $h$, let
$\Gamma_h=\chi^{-1}(1+p^h\BZ_p)$. If we regard $\chi$ as a character of $\BQ_p^\times$ via the local Artin map, then it is equal to
$\epsilon(x)=x|x|$. Throughout this paper, we fix a finite extension $L$ of $\BQ_p$. We denote by $\mathcal{O}_L$ the ring of integers of $L$, and by $k_L$ the residue field. Let $\widehat{\mathscr{T}}(L)$ be the set of all continuous characters $\delta: G_{\BQ_p}\ra L^\times$. For any $\delta\in \widehat{\mathscr{T}}(L)$,
the Hodge-Tate weight $w(\delta)$ of $\delta$ is defined by $w(\delta)=\frac{\log \delta(u)}{\log u}$ where
$u$ is any element of $\BQ_p^\times\setminus\mu_{p^\infty}$. For any $L$-linear representation $V$ of $G_{\BQ_p}$, we denote by $\check{V}$ the Tate dual $V^*(\epsilon)$ of $V$. For any $\SL\in L$, let $\log_\SL: \BQ_p^\times
\rightarrow L$ be the homomorphism defined by $\log_\SL(p)$=1 and
$\log_\SL(x)=-\sum\limits_{n=1}^{+\infty} \frac{(1-x)^{n}}{n}$ when
$|x-1|<1$. We put $\log_\infty=v_p$. Hence $\log_\SL$ is defined for all $\SL\in\RP^1(L)$.

Let $\RB$ be the subgroup of upper triangular matrices of $\GL_2$, let $\RP=\wvec{*}{*}{0}{1}$ be the mirabolic subgroup of $\GL_2$, let $\RT$ be the subgroup of diagonal matrices of $\GL_2$, and let $\RZ$ be the center of $\GL_2$. Put $w=\wvec{0}{1}{1}{0}$. Let $\mathrm{Rep}_{\mathrm{tors}}\GL_2(\BQ_p)$ be the category of smooth $\mathcal{O}_L[\GL_2(\BQ_p)]$-modules which are of finite lengths and admit central characters. Let $\mathrm{Rep}_{\mathcal{O}_L}\GL_2(\BQ_p)$ be the category of $\mathcal{O}_L[\GL_2(\BQ_p)]$-modules $\Pi$ which are separated and complete for the $p$-adic topology, $p$-torsion free, and $\Pi/p^n\Pi\in\mathrm{Rep}_{\mathrm{tors}}\GL_2(\BQ_p)$ for any $n\in\mathbb{N}$.

\section*{Acknowledgements}
The first author thanks Christophe Breuil and Pierre Colmez for helpful discussions and communications, and Liang Xiao for helpful comments on earlier drafts of this paper.
The second and the third authors thank
Qingchun Tian for helpful discussions and encouragements. The first author would also like to thank Henri Darmon for his various help during the first author's stay at McGill university.
The first author wrote this paper as a postdoctoral fellow of Centre de Recherches Math\'ematiques. Part of this work were done while the first author was a visitor at
Institut des Hautes \'Etudes Scientifiques and Beijing International Center for
Mathematical Research. The first author is grateful to these institutions for their hospitality. While writing this paper, the second author was supported by
the postdoctoral grant 533149-087 of Peking University and Beijing International Center for Mathematical Research.

\section{Preliminaries on $(\varphi,\Gamma)$-modules}

\subsection{Dictionary of $p$-adic functional analysis}
Let $\mathcal{O}_{\SE}$ be
the ring of Laurent series $f=\sum_{i\in\BZ} a_i T^i$, where $a_i\in \mathcal{O}_L$, such that $v_p(a_i)\ra \infty$ as $i\ra-\infty$. Let $\SE=\mathcal{O}_{\SE}[1/p]$ be the fraction field of $\mathcal{O}_{\SE}$.

For any $r\in\BR_+\cup\{+\infty\}$, let $\SE^{]0,r]}$ be the ring of Laurent series $f=\sum_{i\in\BZ} a_i T^i$, with $a_i\in L$, such that $f$ is convergent on the annulus $0<v_p(T)\leq r$. For any $0<s\leq r\leq+\infty$, we define the valuation $v^{\{s\}}$ on $\SE^{]0,r]}$ by
$$v^{\{s\}}(f)=\inf_{i\in\BZ} \{v_p(a_i)+is\}\hspace{1mm}\text{if}\hspace{1mm}s\neq\infty;\hspace{1mm}v^{\{\infty\}}(f)=v_p(f(0)).$$
We equip $\SE^{]0,r]}$ with the Fr\'echet topology defined by the family of valuations $\{v^{\{s\}}|0< s\leq r\}$; then $\SE^{]0,r]}$ is complete. We equip the \emph{Robba ring} $\SR=\bigcup_{r>0}\SE^{]0,r]}$ with the inductive limit topology. We denote $\SE^{]0,+\infty]}$, the ring of analytic functions on the open unit disk, by $\SR^+$.

Let $\SE^\dagger$ be the subring of overconvergent elements of $\SE$, i.e.
$\SE^\dagger$ is the set of $f\in \SE$ such that $f(T)$ is convergent over some annulus $0<v_p(T)\leq r$.
Let $\SE^{(0,r]}=\SE^\dagger\cap\SE^{]0,r]}$. We equip $\SE^\dagger=\bigcup_{r>0}\SE^{(0,r]}$ with the inductive limit topology. We denote $\SE^{(0,\infty]}=\mathcal{O}_L[[T]][1/p]$ by $\SE^+$, and let $\mathcal{O}_{\SE^+}=\mathcal{O}_L[[T]]$.

Let $R$ denote any of $\mathcal{O}_{\SE^+}, \SE^+, \mathcal{O}_\SE,\SE, \SE^\dagger,\SR^+$ and $\SR$. We equip the ring $R$ with commuting continuous actions of $\varphi$ and $\Gamma$ defined by
\begin{equation}\label{eq:phi-gamma-action}
\varphi(f(T))=f((1+T)^p-1),\qquad\gamma(f(T))=f((1+T)^{\chi(\gamma)}-1), \quad \gamma\in\Gamma.
\end{equation}
If we view $R$ as a $\varphi(R)$-module, then $R$ is freely generated by $\{(1+T)^i|i=0,\dots,p-1\}$. Thus for any $y\in R$, we may write $y=\sum\limits_{i=0}^{p-1}(1+T)^i \varphi(y_i)$ for some uniquely determined $y_0,\dots,y_{p-1}\in R$, and we define the operator $\psi: R\rightarrow R$ by setting $\psi(y)=y_0$. It follows directly from the definition that $\psi$ is a left inverse to $\varphi$, and that $\psi$ commutes with the $\Gamma$-action. For any $f=\sum_{i\in\BZ}a_i T^i\in\SR$, we define the residue of the $1$-form
$\omega=f\cdot dT$ as $\mathrm{res}(\omega)=a_{-1};$ and for any $f\in\SR$, we define $\res_{0} (f)=\mathrm{res}(f\frac{dT}{1+T})$.

We denote by $\SC^0(\BZ_p,L)$ the space of continuous functions on $\BZ_p$
with values in $L$; this is an $L$-Banach space equipped with the supremum norm. Let $\LA(\BZ_p,L)$ denote the space of locally analytic functions on $\BZ_p$
with values in $L$. The classical results of Mahler and Amice
assert that the set of functions $\{\binom{x}{n}\}_{n\in\BN}$ constitutes an orthogonal basis of $\SC^0(\BZ_p,L)$, and that
for $f=\sum_{n\in\BN}a_n(f)\binom{x}{n}\in\SC^0(\BZ_p,L)$,  $f\in\LA(\BZ_p,L)$
if and only if there exists some $r>0$ such that $v_p(a_n(f))-rn\ra +\infty$ as $n\ra+\infty$.

For any $u\geq0$, we denote by $\SC^u(\BZ_p,L)$ the space of all $\SC^u$-functions on $\BZ_p$; this an $L$-Banach space (see \cite{Colmez-function} for more details).
We have $\LA(\BZ_p,L)\subset \SC^u(\BZ_p,L)\subseteq\SC^0(\BZ_p,L)$, and that $\LA(\BZ_p,L)$ is dense in $\SC^u(\BZ_p,L)$ for any $u\geq0$.

We denote by $\SD(\BZ_p,L), \SD_u(\BZ_p,L)$ the topological dual of $\LA(\BZ_p,L),\SC^u(\BZ_p,L)$ respectively. Note that the natural map $\SD_u(\BZ_p,L)\ra\SD(\BZ_p,L)$ is injective since
$\LA(\BZ_p,L)$ is dense in $\SC^u(\BZ_p,L)$. The elements of $\SD(\BZ_p,L)$ are called
{\it distributions on $\BZ_p$}. A distribution $\mu$ is called of  {\it order $u$} if $\mu\in\SD_u(\BZ_p,L)$.
We define the actions of $\varphi,\psi $ and $\Gamma$ on $\SD(\BZ_p,L)$
by the formulas
\begin{eqnarray*} \int_{\BZ_p} f \varphi(\mu) = \int_{\BZ_p} f(px)
\mu, \hskip 5pt \int_{\BZ_p} f\psi(\mu) = \int_{p\BZ_p} f
(p^{-1}x)\mu, \hskip 5pt  \int_{\BZ_p} f \sigma_a(\mu) =
\int_{\BZ_p} f(ax) \mu
\end{eqnarray*}
for any $f\in\LA(\BZ_p,L),\mu\in\SD(\BZ_p,L)$ and $a\in\BZ_p^\times$.

The {\it Amice transformation} $\SA$ on $\SD(\BZ_p,L)$ is defined by
$$\SA:\SD(\BZ_p,L)\ra L[[T]],\quad \SA(\mu)=\int_{\BZ_p}(1+T)^x\mu(x)=\sum_{n=0}^{+\infty} T^n \int_{\BZ_p} \binc{x}{n} \mu.
$$
It is an immediate consequence of the results of Mahler and Amice that the Amice transformation $\mu\mapsto \SA(\mu)$ induces topological isomorphisms
from $\SD_0(\BZ_p,L)$ and $\SD(\BZ_p,L)$ to $ \SE^+$ and $ \SR^+$ respectively which are compatible with the actions of $\varphi,\psi$ and $\Gamma$.

We denote by $\LA_c(\BQ_p,L)$ the space of compactly supported
$L$-valued locally analytic functions on $\BQ_p$, and denote by $\SD(\BQ_p,L)$ the topological dual of
$\LA_c(\BQ_p,L)$. The elements of $\SD(\BQ_p,L)$ are called \emph{distributions on $\BQ_p$}. For any $\mu\in\SD(\BQ_p,L)$, let
$\mu^{(n)}$ be the distribution on $\BZ_p$ defined by
\[
\int_{\BZ_p}f\mu^{(n)}=\int_{p^{-n}\BZ_p}f(p^n x)\mu
\]
for any $f\in
\LA(\BZ_p,L)$.  It follows that $\psi(\mu^{(n+1)})=\mu^{(n)}$, and that any sequence of
distributions $(\mu^{(n)})_{n\in \BN}$ on $\BZ_p$ so that
$\psi(\mu^{(n+1)})=\mu^{(n)}$ uniquely determines
a distribution $\mu$ on $\BQ_p$. The {\it Amice transformation $\SA(\mu)$} for $\mu\in\SD(\BQ_p,L)$ is then defined to be the sequence $(\SA(\mu^{(n)}))_{n\in\BN}$.

A distribution $\mu$ on $\BQ_p$ is said to be {\it of order $u$} if
 all $\mu^{(n)}$ are of order $u$. The distribution $\mu$ is said to be {\it globally of
order $u$}, if there is a constant $C_u(\mu)$ such that
$v_{\SD_u}(\mu^{(n)})\geq nu+C_u(\mu)$ for all $n\in \BN$. Let
$\SD_u(\BQ_p,L)$ denote the space of distributions on $\BQ_p$ globally
of order $u$.

\subsection{The category of $(\varphi,\Gamma)$-modules}
Keep notations as in \S2.1. We define a \emph{$(\varphi,\Gamma)$-module} over $R$ to be a finite free $R$-module $D$
equipped with commuting continuous semilinear $\varphi,\Gamma$-actions. When $R=\mathcal{O}_\SE$, the $(\varphi,\Gamma)$-module $D$ is called {\it \'etale} if $\varphi(D)$ generates $D$ as an $\mathcal{O}_\SE$-module.  When $R=\SE$, the $(\varphi,\Gamma)$-module $D$ is
called {\it \'etale} if it arises by base change from an \'etale $(\varphi,\Gamma)$-module over $\mathcal{O}_{\SE}$. A $(\varphi,\Gamma)$-module $D$ over
$\SE^\dagger$ is called {\it \'etale} if
$D\otimes_{\SE^\dagger}\SE$ is \'etale as an $(\varphi,\Gamma)$-module over
$\SE$. A $(\varphi,\Gamma)$-module $D$ over $\SR$ is called
{\it \'etale} if the underlying
$\varphi$-module is \emph{pure of slope $0$} in the sense of Kedlaya's slope theory \cite{Kedlaya08}.

\begin{example}
For any $\delta\in\widehat{\mathscr{T}}(L)$,
we define $R(\delta)$ to be the rank 1 $(\varphi,\Gamma)$-module over $R$ which has an $R$-base $e$ satisfying
\begin{equation}\label{eq:rank1}
\varphi(e)=\delta(p)e,\qquad \sigma_a(e)=\delta(a)e,\quad a\in\BZ_p^\times.
\end{equation}
Such an element $e$, which is unique up to a nonzero scalar (this is because $R^{\varphi=1,\Gamma=1}=L$ or $\mathcal{O}_L$), is called a \emph{standard basis} of $R(\delta)$.
\end{example}

Let $V$ be a $d$-dimensional $L$-linear representation of $G_{\BQ_p}$, and let $T$ be a $G_{\BQ_p}$-invariant $\mathcal{O}_L$-lattice of $V$. Let $\widehat{\SE^{\mathrm{ur}}}$ be the $p$-adic completion of the maximal unramified extension
of $\SE$, and let $\widehat{\mathcal{O}_\SE^{\mathrm{ur}}}$ be the ring of integers of $\widehat{\SE^{\mathrm{ur}}}$. The $\varphi,\Gamma$-actions on $\SE$ naturally extend to continuous actions, which we again denote by $\varphi,\Gamma$ respectively, on $\widehat{\SE^{\mathrm{ur}}}$. We define
\[
\RD(T)=(T\otimes_{\mathcal{O}_\SE}\widehat{\mathcal{O}_\SE^{\mathrm{ur}}})^{H}
\qquad (\text{resp}. \hspace{1mm} \RD(V)=(V\otimes_{\SE}\widehat{\SE^{\mathrm{ur}}})^{H}),
\]
which is a $(\varphi,\Gamma)$-module over $\mathcal{O}_{\SE}$ (resp. $\SE$). We define $\RD^\dagger(V)$ to be the maximal finite dimensional $\varphi,\Gamma$-stable $\SE^\dagger$-subspace of $\RD(V)$, and we define $ \RD_\rig(V) = \RD^\dagger(V)\otimes_{\SE^\dagger}
\SR$; then $\RD^\dagger(V)$ and $\RD_\rig(V)$ are $(\varphi,\Gamma)$-modules over $\SE^\dagger$ and $\SR$ respectively.

\begin{thm} $($Fontaine \cite{Fontaine90}, Cherbonnier-Colmez \cite{CC98}, Berger \cite{Berger}, \cite{Berger06}$)$  $\RD(T)$ $($resp. $\RD(V)$, $\RD^\dagger(V)$, $\RD_\rig(V)$$)$ is an \'etale $(\varphi,\Gamma)$-module of rank $d$. Furthermore,
the functor $T\mapsto \RD(T)$ $($resp. $V\mapsto\RD(V)$, $V\mapsto\RD^\dagger(V)$, $V\mapsto
\D_\rig(V)$$)$ is a rank preserving equivalence of
categories from the category of $\mathcal{O}_L$-linear $($resp.
$L$-linear$)$ $G_{\BQ_p}$-representations to the category of \'etale $(\varphi,\Gamma)$-modules
over $\mathcal{O}_\SE$ $($resp. $\SE$, $\SE^\dagger$, $\SR$$)$.
\end{thm}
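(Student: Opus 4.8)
The plan is to establish the four equivalences in the order $\mathcal{O}_{\SE}$, $\SE$, $\SE^\dagger$, $\SR$, each bootstrapped from the previous one, with the genuinely deep inputs being Cherbonnier--Colmez's overconvergence theorem at the $\SE^\dagger$ stage and Kedlaya's slope theory at the $\SR$ stage; everything else is Galois descent and base-change bookkeeping.

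\textbf{The $\mathcal{O}_{\SE}$- and $\SE$-levels.} I would first record the fundamental properties of the period ring $\widehat{\SE^{\mathrm{ur}}}$: it carries commuting $\varphi,\Gamma$-actions extending those on $\SE$, one has $\Gal(\widehat{\SE^{\mathrm{ur}}}/\SE)=H$ and $(\widehat{\mathcal{O}_\SE^{\mathrm{ur}}})^H=\mathcal{O}_\SE$, and, crucially, the vanishing statements $H^1(H,\GL_d(\widehat{\mathcal{O}_\SE^{\mathrm{ur}}}))=1$ and $(\widehat{\mathcal{O}_\SE^{\mathrm{ur}}})^{\varphi=1}=\mathcal{O}_L$ (with the analogue after inverting $p$). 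Granting these, for any $\mathcal{O}_L$-representation $T$ the module $\RD(T)=(T\otimes_{\mathcal{O}_L}\widehat{\mathcal{O}_\SE^{\mathrm{ur}}})^H$ is free over $\mathcal{O}_\SE$ of rank $\rank_{\mathcal{O}_L}T$, the canonical map $\RD(T)\otimes_{\mathcal{O}_\SE}\widehat{\mathcal{O}_\SE^{\mathrm{ur}}}\to T\otimes_{\mathcal{O}_L}\widehat{\mathcal{O}_\SE^{\mathrm{ur}}}$ is an isomorphism (this gives both rank-preservation and \'etaleness), and the residual $\Gamma=G_{\BQ_p}/H$-action equips $\RD(T)$ with its $(\varphi,\Gamma)$-module structure; the functor $D\mapsto(D\otimes_{\mathcal{O}_\SE}\widehat{\mathcal{O}_\SE^{\mathrm{ur}}})^{\varphi=1}$ is a quasi-inverse, and full faithfulness is the corresponding $\Hom$-computation. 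To prove the vanishing statements one reduces modulo the maximal ideal, where the assertion becomes Fontaine's characteristic-$p$ equivalence between \'etale $\varphi$-modules over $k_L((\pi))$ and $k_L$-representations of $H$ (an instance of Katz's descent, via the theory of the field of norms identifying $H$ with $\Gal$ of the separable closure of $k_L((\pi))$), then dévisses over $\mathcal{O}_L/p^n$ and passes to the inverse limit. Inverting $p$ yields the $\SE$-case.

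\textbf{The $\SE^\dagger$-level.} The substance here is the Cherbonnier--Colmez overconvergence theorem \cite{CC98}: $\RD^\dagger(V)$ is free over $\SE^\dagger$ of rank $d=\dim_L V$ and $\RD^\dagger(V)\otimes_{\SE^\dagger}\SE=\RD(V)$. I would prove it by the Tate--Sen (decompletion) method: beginning from a basis of $\RD(V)$ and the matrices of $\varphi$ and of a topological generator $\gamma$ of a small open subgroup of $\Gamma$, one improves the basis step by step so that these matrices acquire entries in $\SE^{(0,r]}$ for some $r>0$, the improvement being possible because on the relevant complementary pieces of $\widehat{\SE^{\mathrm{ur}}}$ the operator $\gamma-1$ is invertible with denominators of bounded $p$-adic size (the Tate--Sen axioms). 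With this in hand, the $\SE^\dagger$-equivalence follows from the $\SE$-equivalence: $V\mapsto\RD^\dagger(V)$ is fully faithful since morphisms are $\varphi$-equivariant and hence preserve overconvergent parts, and it is essentially surjective since for an \'etale $(\varphi,\Gamma)$-module $D$ over $\SE^\dagger$ one has $D\otimes_{\SE^\dagger}\SE\cong\RD(V)$ for a unique $V$, whence $D=\RD^\dagger(V)$ by uniqueness of the $\varphi$-stable overconvergent lattice.

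\textbf{The $\SR$-level.} Finally I would base change along $\SE^\dagger\hookrightarrow\SR$ and put $\RD_\rig(V)=\RD^\dagger(V)\otimes_{\SE^\dagger}\SR$. By Kedlaya's slope theory \cite{Kedlaya08}, the base change to $\SR$ of an \'etale $\varphi$-module over $\SE^\dagger$ is pure of slope $0$, so $\RD_\rig(V)$ is \'etale in the required sense and of rank $d$. Full faithfulness reduces to the $\SE^\dagger$-level because $\RD^\dagger(V)$ is recovered inside $\RD_\rig(V)$ as its subset of bounded elements, using $\SE^\dagger=\SE\cap\SR$. For essential surjectivity I would invoke Kedlaya's slope filtration theorem: a pure-of-slope-$0$ $(\varphi,\Gamma)$-module $D$ over $\SR$ descends canonically to an \'etale $\varphi$-module over $\SE^\dagger$, and canonicity forces this descent to be $\Gamma$-stable, so the $\SE^\dagger$-equivalence produces the desired $V$. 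I expect the Cherbonnier--Colmez decompletion estimates, and secondarily Kedlaya's slope filtration theorem, to be the main obstacles; the remaining steps are formal.
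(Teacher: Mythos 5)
This theorem is presented in the paper as a background citation to Fontaine, Cherbonnier--Colmez, and Berger, and the authors give no proof of their own; there is therefore nothing in the paper to compare against, and the right question is simply whether your sketch reproduces the standard argument correctly. It does. The four-stage bootstrap ($\mathcal{O}_\SE$ via Galois descent over $\widehat{\mathcal{O}_\SE^{\mathrm{ur}}}$ reduced through d\'evissage to Fontaine's characteristic-$p$ equivalence via the field of norms; invert $p$ for $\SE$; Cherbonnier--Colmez overconvergence by decompletion for $\SE^\dagger$; Kedlaya's slope filtration for $\SR$) is precisely the architecture of the cited references, and you correctly isolate the two nonformal inputs. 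You also quietly fix a typo in the paper's formula for $\RD(T)$, which should read $T\otimes_{\mathcal{O}_L}\widehat{\mathcal{O}_\SE^{\mathrm{ur}}}$ rather than $T\otimes_{\mathcal{O}_\SE}$.

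Two small points of imprecision, neither a gap. First, the Tate--Sen axiomatic packaging you invoke at the $\SE^\dagger$ stage is really the later Berger--Colmez reformulation; the original \cite{CC98} argument proceeds by direct estimates, though the underlying mechanism (invertibility of $\gamma-1$ modulo controlled denominators on a complementary piece) is the same. Second, at the $\SR$ stage the phrase ``subset of bounded elements'' is loose: what one actually uses is Kedlaya's uniqueness of the \'etale $\SE^\dagger$-lattice inside a pure slope-$0$ $\varphi$-module over $\SR$, which forces any morphism over $\SR$ to restrict to the lattices, and which makes the descended $\SE^\dagger$-module canonical and hence $\Gamma$-stable. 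Your identification $\SE^\dagger=\SE\cap\SR$ is the ring-level shadow of this, but the module-level statement is the uniqueness theorem, not a naive boundedness condition relative to a basis.
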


Let $D$ be a $(\varphi,\Gamma)$-module over $R$. If $D$ is isomorphic to its $\varphi$-pullback, then for any
$y\in D$, we may write $y=\sum_{i=0}^{p-1} (1+T)^i \varphi(y_i)$
for some uniquely determined $y_i\in D$. We define $\psi: D\rightarrow D$ by setting $\psi(y)=y_0$. It follows that $\psi$ commutes with $\Gamma$ and satisfies
$$\psi(a\varphi(x))=\psi(a)x, \hskip 10pt \psi(\varphi(a)x)=a\psi(x)$$
for any $a\in R, \ x\in D$. In particular, $\psi$ is a left inverse to $\varphi$. Set $\Res_{p\BZ_p}(y)=\varphi\psi(y)$,
$\Res_{\BZ_p^\times}(y)=(1-\varphi\psi)(y)$, and denote $\Res_{p\BZ_p}(D)$, $\Res_{\BZ_p^\times}(D)$ by $D\boxtimes p\BZ_p$, $D\boxtimes\BZ_p^\times$ respectively.

For an \'etale $(\varphi,\Gamma)$-module $D$ over $\SO_\SE$, A {\it trellis} of $D$ is a compact
$\mathcal{O}_{\SE^+}$-submodule $N$ which $\SO_\SE$-linearly generates $D$. Colmez \cite{Treillis} proves that the set of $\psi$-stable treillis admits a unique minimal element $D^\natural$,and that $\psi$ is surjective on $D^\natural$. It follows from the uniqueness that $D^\natural$ is stable under the $\Gamma$-action. In the simplest case when $D=\mathcal{O}_\SE$, we have $D^\natural=\mathcal{O}_{\SE^+}$.
For an \'etale $(\varphi,\Gamma)$-module $D$ over $\SE$, if $D$ is the base change of an \'etale $(\varphi,\Gamma)$-module $D_0$ over $\mathcal{O}_\SE$, we define $D^\natural=D_0^\natural[1/p]$ which is independent of the choice of $D_0$.

We define the $\varphi,\Gamma$-actions on the rank 1 $R$-module $R\frac{dT}{1+T}$
formally by
\[
\varphi(x\frac{dT}{1+T})=\varphi(x)\frac{dT}{1+T}, \qquad
\gamma(x\frac{dT}{1+T})=\chi(\gamma)\gamma(x)\frac{dT}{1+T},\ x\in R.
\]
Then the rank 1 $(\varphi,\Gamma)$-module $R\frac{d T}{1+T}$ is isomorphic to $R(\epsilon)$. For any \'etale $(\varphi,\Gamma)$-module $D$ over $R$, the \'etale $(\varphi,\Gamma)$-module $\check{D}=\Hom_R(D,
R\frac{dT}{1+T})$ is called the \emph{Tate dual} of $D$. We define the pairing $\{ \cdot ,\cdot \}:
\check{D}\times D\rightarrow L$ by setting $\{x, y\}=\res_0(\sigma_{-1}(x)(y)).$
It follows that $\{ \cdot ,\cdot \}$ is perfect and satisfies
\begin{equation}\label{eq:pairing-formula}
\{ x, \varphi(y) \}=\{ \psi(x), y \}.
\end{equation}


\section{$p$-adic local Langlands correspondence for $\GL_2(\BQ_p)$}
\label{sec:colmez}


\subsection{Operator $w_{\delta}$}
For an \'etale $(\varphi,\Gamma)$-module $D$ over $\mathcal{O}_{\SE}$ or $\SE$ and a continuous character $\delta:\BQ_p^\times\ra \mathcal{O}_L^\times$, Colmez constructs the involution
$w_\delta:D\boxtimes\BZ_p^\times\rightarrow D\boxtimes\BZ_p^\times$ defined by
\begin{equation} \label{eq:wD}
\begin{split}
w_\delta(z)=\lim_{n\rightarrow +\infty} \sum_{i\in \BZ_p^\times \ \mathrm{mod} \
p^n} \delta(i^{-1}) (1+T)^i \sigma_{-i^{2}} \cdot \varphi^n\psi^n
((1+T)^{-i^{-1}}z).
\end{split}
\end{equation}
Note that the right hand side of (\ref{eq:wD}) only involves $\delta|_{\BZ_p^\times}$. Since $\delta(\BZ_p^\times)\subseteq \mathcal{O}_L^\times$ for any $\delta\in\widehat{\mathscr{T}}(L)$, (\ref{eq:wD}) is still convergent for any  $D$ which is a twist of an \'etale $(\varphi,\Gamma)$ one and $\delta\in\widehat{\mathscr{T}}(L)$. From now on, we suppose $D$ is a twist of an \'etale $(\varphi,\Gamma)$-module over $\mathcal{O}_{\SE}$ or $\SE$ and $\delta\in\widehat{\mathscr{T}}(L)$, and define $w_\delta:D\boxtimes\BZ_p^\times\rightarrow D\boxtimes\BZ_p^\times$ by (\ref{eq:wD}).  Let $D^\dagger, D_\rig$ denote the $(\varphi,\Gamma)$-modules over $\SE^\dagger,\SR$ corresponding to $D$.

\begin{example}\label{example:calE+-stable}
It follows by $(\ref{eq:wD})$ that $\mathcal{O}_{\SE^+}\boxtimes\BZ_p^\times\subset\mathcal{O}_{\SE}\boxtimes\BZ_p^\times$ is stable under $w_\delta$. By \cite[V]{Treillis}, one can describe the $w_\delta$-action on $(\mathcal{O}_{\SE^+})^{\psi=0}$ more explicitly. Namely, for any $f\in\mathscr{C}^0(\BZ_p, L)$ and $z\in\mathcal{O}_{\SE}^+\boxtimes\BZ_p^\times$,
\begin{equation}\label{eq:w_delta}
\int_{\BZ_p^\times}f(x)\mathscr{A}^{-1}(w_\delta(z))=\int_{\BZ_p^\times}\delta(x)f(1/x)\mathscr{A}^{-1}(z)
\end{equation}

\end{example}

For any abelian profinite group $C$, we denote by $\Lambda_C$ the complete group algebra $$\mathcal{O}_L[[C]]=\limproj\mathcal{O}_L[C/C']$$
where $C'$ goes through all open subgroups of $C$.
If $C$ is pro-$p$ cyclic, and if $c$ is a topological generator of $C$,
then $\Lambda_C$ is canonically isomorphic to the ring consisting of $g(c-1)$ for all $g(T)\in\mathcal{O}_L[[T]]$,
and we further define $R(C)$ to be the ring consisting of $g(c-1)$ for all $g(T)\in R$ for any $R$ of $\SE^+$, $\SE^\dagger$, $\mathcal{O}_{\SE}^\dagger$, $\SE$, $\mathcal{O}_{\SE}$, $\SR^+$
and $\SR$; this is independent of the choice of $c$.
Now we choose $d\geq1$ such that $\Gamma_d$ is pro-$p$ cyclic (in fact, we can choose $d=1$ if $p$ is odd, and $d=2$ if $p=2$), then we define $R(\Gamma)=\Lambda_{\Gamma}\otimes_{\Lambda_{\Gamma_d}} R(\Gamma_d)$ which is independent of the choice of $d$.
The topological rings $\mathcal{O}_\SE(\Gamma)$ or $\SE(\Gamma)$, $\SE^\dagger(\Gamma)$ and $\SR(\Gamma)$
naturally act on $D\boxtimes\BZ_p^\times$, $D^\dagger\boxtimes\BZ_p^\times$ and $D_\rig\boxtimes\BZ_p^\times$ respectively.
The following Lemma follows from the proof of \cite[Lemme V.2.2]{Colmez-Langlands}.
\begin{lem}\label{lem:anti}
For any $\gamma\in\Gamma$ and $z\in D\boxtimes\BZ_p^\times$, $w_\delta(\gamma(z))=\delta(\chi(\gamma))\gamma^{-1}(w_\delta(z))$.
\end{lem}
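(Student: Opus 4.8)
The plan is to prove the identity directly from the defining formula (\ref{eq:wD}). Set $a=\chi(\gamma)$, so that $\gamma=\sigma_a$ and $\gamma^{-1}=\sigma_{a^{-1}}$, and for each $n$ write $w_\delta^{(n)}(z)$ for the finite sum over $i\in\BZ_p^\times \bmod p^n$ occurring in (\ref{eq:wD}), so that $w_\delta(z)=\lim_{n\to\infty}w_\delta^{(n)}(z)$. Since every $\sigma_b$ is a continuous $L$-linear automorphism of $D\boxtimes\BZ_p^\times$ and $\delta(a)\in L^\times$ is a fixed scalar, it suffices to establish
\[
w_\delta^{(n)}(\sigma_a(z))=\delta(a)\,\sigma_{a^{-1}}\bigl(w_\delta^{(n)}(z)\bigr)
\]
for each $n$ and then pass to the limit.

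First I would rewrite the $i$-th summand of $w_\delta^{(n)}(\sigma_a(z))$. By semilinearity, $(1+T)^{-i^{-1}}\sigma_a(z)=\sigma_a\bigl((1+T)^{-(ia)^{-1}}z\bigr)$, and since $\varphi,\psi$ commute with $\Gamma$ the operator $\varphi^n\psi^n$ commutes with $\sigma_a$; combined with $\sigma_{-i^2}\sigma_a=\sigma_{-i^2a}$, this shows the $i$-th summand equals
\[
\delta(i^{-1})\,(1+T)^i\,\sigma_{-i^2a}\bigl(\varphi^n\psi^n((1+T)^{-(ia)^{-1}}z)\bigr).
\]
Now substitute $j=ia$, a bijection of $\BZ_p^\times\bmod p^n$, with $i=ja^{-1}$, $i^{-1}=aj^{-1}$, $-i^2a=-j^2a^{-1}$ and $(ia)^{-1}=j^{-1}$, so the summand becomes
\[
\delta(a)\,\delta(j^{-1})\,(1+T)^{ja^{-1}}\,\sigma_{-j^2a^{-1}}\bigl(\varphi^n\psi^n((1+T)^{-j^{-1}}z)\bigr).
\]
Using that the $\Gamma$-action on $D$ is semilinear over that on $R$, so that $\sigma_{a^{-1}}\bigl((1+T)^j u\bigr)=(1+T)^{ja^{-1}}\sigma_{a^{-1}}(u)$, together with $\sigma_{a^{-1}}\sigma_{-j^2}=\sigma_{-j^2a^{-1}}$, the last display is exactly $\delta(a)\,\sigma_{a^{-1}}\bigl(\delta(j^{-1})(1+T)^j\sigma_{-j^2}(\varphi^n\psi^n((1+T)^{-j^{-1}}z))\bigr)$. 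Summing over $j$ and using $L$-linearity of $\sigma_{a^{-1}}$ gives the claimed identity for $w_\delta^{(n)}$; letting $n\to\infty$ yields $w_\delta(\gamma(z))=\delta(\chi(\gamma))\,\gamma^{-1}(w_\delta(z))$.

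I do not expect a serious obstacle here: the only delicate points are bookkeeping, namely that the individual summands in (\ref{eq:wD}) are independent of the chosen representatives of $\BZ_p^\times\bmod p^n$ (part of Colmez's construction of $w_\delta$, which extends verbatim to the present twisted setting because only $\delta|_{\BZ_p^\times}$, valued in $\mathcal{O}_L^\times$, intervenes in (\ref{eq:wD})), and that the reindexing $j=ia$ can be carried out compatibly in $n$. Both are routine; indeed the computation above is precisely the one underlying \cite[Lemme V.2.2]{Colmez-Langlands}, which is why the lemma is cited as following from its proof.
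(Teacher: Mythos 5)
Your proof is correct and is precisely the direct computation underlying Colmez's Lemme V.2.2, which the paper cites rather than reproving; the reindexing $j=ia$ and the semilinearity of $\sigma_{a^{-1}}$ are exactly the right ingredients, and they manifestly only use $\delta|_{\BZ_p^\times}$ and the standard $\varphi,\psi,\Gamma$ commutation relations, so they carry over to the twisted setting as you observe. One small imprecision worth noting: the individual summands in (\ref{eq:wD}) are not literally independent of the choice of representatives of $\BZ_p^\times \bmod p^n$ — only the limit $w_\delta(z)$ is — so the exact identity $w_\delta^{(n)}(\sigma_a(z))=\delta(a)\sigma_{a^{-1}}(w_\delta^{(n)}(z))$ should be read as comparing the partial sum on the left over representatives $\{i\}$ with the partial sum on the right over the reindexed representatives $\{ia\}$, after which one passes to the (representative-independent) limit.
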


Let $\iota_\delta: R(\Gamma)\ra R(\Gamma)$ denote
the involution defined by $\gamma\rightarrow
\delta(\chi(\gamma))\gamma^{-1}$. It is an immediate consequence of Lemma \ref{lem:anti} that
the action of $w_\delta$ on
$D\boxtimes \BZ_p^\times$ is $\mathcal{O}_\SE(\Gamma)$-semilinear with respect to $\iota_{\delta}$, i.e.
\begin{equation} \label{eq:anti2}
w_\delta(\lambda(z)) =\iota_{\delta}(\lambda)(w_\delta(z)),
\quad\lambda \in \mathcal{O}_\SE(\Gamma), \ z\in
D\boxtimes \BZ_p^\times.
\end{equation}

Let $\eta\in\widehat{\mathscr{T}}(L)$.
\begin{prop}\label{prop:rank1}
$\SE^+(\eta)\boxtimes\BZ_p^\times$ is a free $\SE^+(\Gamma)$-module of rank 1. Furthermore, we have
\[
\SR^+(\eta)\boxtimes\BZ_p^\times=\SR^+(\Gamma)\otimes_{\SE^+(\Gamma)}\SE^+(\eta)\boxtimes\BZ_p^\times,\quad \SE^\dagger(\eta)\boxtimes\BZ_p^\times=\SE^\dagger(\Gamma)\otimes_{\SE^+(\Gamma)}\SE^+(\eta)\boxtimes\BZ_p^\times.
\]
 As a consequence,
$\SE^\dagger(\eta)\boxtimes\BZ_p^\times$ is stable under $w_\delta$.
\end{prop}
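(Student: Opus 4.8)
The plan is to establish the three assertions in order: first the freeness of rank $1$ of $\SE^+(\eta)\boxtimes\BZ_p^\times$ over $\SE^+(\Gamma)$; then deduce the two base-change identities; and finally read off $w_\delta$-stability. For the first assertion I would reduce to the untwisted case $\eta=1$, where $\SE^+(1)\boxtimes\BZ_p^\times=(\SE^+)^{\psi=0}$ via the standard basis $e$ of $\SR^+(\eta)$; twisting by $\eta$ is an isomorphism of $\SE^+(\Gamma)$-modules compatible with the $\psi=0$ decompositions (the character $\eta$ being valued in $\mathcal{O}_L^\times$ on $\BZ_p^\times$, multiplication by $e$ intertwines the $\Gamma$-actions up to the twist already absorbed in $\SE^+(\Gamma)$). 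So it suffices to show $(\SE^+)^{\psi=0}$ is free of rank $1$ over $\SE^+(\Gamma)$. This is a classical computation via the Amice transform: under $\SA$, $(\SE^+)^{\psi=0}$ corresponds to those distributions in $\SD_0(\BZ_p,L)$ supported on $\BZ_p^\times$, i.e. $\SD_0(\BZ_p^\times,L)$, and the $\Gamma\cong\BZ_p^\times$-action is by translation/dilation, so $\SD_0(\BZ_p^\times,L)$ is free of rank $1$ over $\SE^+(\Gamma)\cong\SD_0(\BZ_p^\times,L)$ acting by convolution — with generator $(1+T)$ (equivalently the Dirac mass at $1\in\BZ_p^\times$). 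Concretely: the map $\lambda\mapsto\lambda\cdot(1+T)$ from $\SE^+(\Gamma)$ to $(\SE^+)^{\psi=0}$ is an isomorphism because $\{(1+T)^a\}$ for $a$ running over coset representatives of $\BZ_p^\times$ modulo $\Gamma_d$ give, together with the ring structure of $\SE^+(\Gamma_d)$, exactly the $\psi=0$ part.

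For the two base-change identities, the point is that the same generator $(1+T)\cdot e$ (image of a standard basis under multiplication by $1+T$) serves as a basis for all three rings $\SE^+(\eta)$, $\SE^\dagger(\eta)$, $\SR^+(\eta)$ inside $\SR^+(\eta)\boxtimes\BZ_p^\times$; this is because the functor $D\mapsto D\boxtimes\BZ_p^\times=(1-\varphi\psi)(D)$ is defined identically over $\SE^+$, $\SE^\dagger$, $\SR^+$ and $\SR$, and the ring extensions $\SE^+(\Gamma)\hookrightarrow\SE^\dagger(\Gamma)$, $\SE^+(\Gamma)\hookrightarrow\SR^+(\Gamma)$ are flat (in fact the relevant modules are finite free of rank one), so tensoring with the bigger ring of $\Gamma$-operators commutes with taking $\psi=0$ parts. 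More precisely, I would note $\SR^+(\eta)=\SR^+\otimes_{\SE^+}\SE^+(\eta)$ as $(\varphi,\Gamma)$-modules and $\psi$ acts diagonally, so $(\SR^+(\eta))^{\psi=0}=\SR^+\otimes_{\SE^+}(\SE^+(\eta))^{\psi=0}$, and the latter identifies with $\SR^+(\Gamma)\otimes_{\SE^+(\Gamma)}(\SE^+(\eta))^{\psi=0}$ once one checks $\SR^+\otimes_{\SE^+}\SE^+(\Gamma)=\SR^+(\Gamma)$ — which holds by the very definition of $R(\Gamma)$ given in the excerpt. The identical argument with $\SE^\dagger$ in place of $\SR^+$ gives the second identity.

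For the consequence, $w_\delta$ maps $\mathcal{O}_{\SE^+}\boxtimes\BZ_p^\times$ into itself by Example \ref{example:calE+-stable}, hence $\SE^+(\eta)\boxtimes\BZ_p^\times$ into itself after inverting $p$; by \eqref{eq:anti2} it is $\iota_\delta$-semilinear over $\mathcal{O}_\SE(\Gamma)$, hence over $\SE^\dagger(\Gamma)$ by continuity since $\SE^\dagger(\Gamma)$ is the closure (in the relevant sense) of a localization of $\mathcal{O}_\SE(\Gamma)$ and $\iota_\delta$ preserves $\SE^\dagger(\Gamma)$ (it sends $\gamma$ to $\delta(\chi(\gamma))\gamma^{-1}$, and overconvergence is preserved). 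Therefore $w_\delta$ carries $\SE^\dagger(\Gamma)\otimes_{\SE^+(\Gamma)}\SE^+(\eta)\boxtimes\BZ_p^\times$, which by the base-change identity equals $\SE^\dagger(\eta)\boxtimes\BZ_p^\times$, into $\iota_\delta(\SE^\dagger(\Gamma))\otimes_{\SE^+(\Gamma)}\bigl(w_\delta(\SE^+(\eta)\boxtimes\BZ_p^\times)\bigr)\subseteq\SE^\dagger(\eta)\boxtimes\BZ_p^\times$, using that $w_\delta(\SE^+(\eta)\boxtimes\BZ_p^\times)\subseteq\SE^+(\eta)\boxtimes\BZ_p^\times$ (the twisted version of Example \ref{example:calE+-stable}, valid since \eqref{eq:wD} only sees $\delta|_{\BZ_p^\times}$ and $\eta|_{\BZ_p^\times}$ which are $\mathcal{O}_L^\times$-valued).

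The main obstacle I anticipate is the careful verification that $w_\delta$ preserves $\SE^+(\eta)\boxtimes\BZ_p^\times$ (not just $\mathcal{O}_{\SE^+}\boxtimes\BZ_p^\times$ untwisted) and that the $\iota_\delta$-semilinearity genuinely extends from $\mathcal{O}_\SE(\Gamma)$ to $\SE^\dagger(\Gamma)$ — this requires knowing that the $w_\delta$-action is continuous for the topology in which $\SE^\dagger(\Gamma)$ is obtained from $\mathcal{O}_\SE(\Gamma)$, which in turn rests on the explicit formula \eqref{eq:w_delta} and the description of overconvergent elements. Everything else is bookkeeping with the Amice transform and the definition of $R(\Gamma)$.
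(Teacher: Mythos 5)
There is a genuine gap in the middle of your proposal, at the two base-change identities. Your Amice-transform argument for $\SE^+$ itself is sound (and, modulo the implicit identification $\SE^+(\Gamma)\cong\SD_0(\BZ_p^\times,L)$, it is a perfectly good proof of the first assertion). But the step ``$\psi$ acts diagonally, so $(\SR^+(\eta))^{\psi=0}=\SR^+\otimes_{\SE^+}(\SE^+(\eta))^{\psi=0}$'' is not valid: $\psi$ is not $\SE^+$-linear, and the $\psi=0$ locus is not compatible with base change of the \emph{coefficient} ring. What you would need is that $(\SR^+)^{\psi=0}$ equals the $\SR^+(\Gamma)$-span of $(\SE^+)^{\psi=0}$ inside $\SR^+$, i.e.\ that the new elements of $\SR^+\setminus\SE^+$ killed by $\psi$ are already captured by enlarging $\SE^+(\Gamma)$ to $\SR^+(\Gamma)$ — and this is precisely the nontrivial content of the proposition, not a consequence of flatness of $\SE^+(\Gamma)\hookrightarrow\SR^+(\Gamma)$. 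The inclusion $\SR^+(\Gamma)\cdot(1+T)\subseteq(\SR^+)^{\psi=0}$ is easy; the reverse inclusion is the point, and the tensor argument you give assumes it. The same objection applies to the $\SE^\dagger$ identity, and there the problem is worse: unlike $\SR^+$ (where you could simply rerun your Amice computation, since $(\SR^+)^{\psi=0}\cong\SD(\BZ_p^\times,L)$ is visibly free of rank one over $\SR^+(\Gamma)\cong\SD(\Gamma,L)$), the ring $\SE^\dagger$ is not a ring of distributions, so no clean Amice argument is available.

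The paper proves the proposition in the opposite logical direction, which avoids this issue entirely. It invokes Colmez (\cite[Lemme V.1.16]{Colmez-Langlands}) for the statement that $\SE^\dagger\boxtimes\BZ_p^\times$ is free of rank $1$ over $\SE^\dagger(\Gamma)$ generated by $1+T$, and Perrin-Riou (\cite[B.2.8]{P00}) for the analogous fact over $\SR^+$, and then obtains the $\SE^+$ statement \emph{by intersection}: $\SE^+\boxtimes\BZ_p^\times=(\SE^\dagger)^{\psi=0}\cap(\SR^+)^{\psi=0}$ and $\SE^+(\Gamma)=\SE^\dagger(\Gamma)\cap\SR^+(\Gamma)$, with the common generator $1+T$. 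The base-change identities are then immediate since all three modules share the same generator, and the $w_\delta$-stability conclusion follows from \eqref{eq:anti2} and Example \ref{example:calE+-stable} exactly as you say. Your final paragraph (stability of $\SE^\dagger(\eta)\boxtimes\BZ_p^\times$ under $w_\delta$, via $\iota_\delta$-semilinearity plus stability of $\SE^+(\eta)\boxtimes\BZ_p^\times$) is correct in outline and matches the paper's reasoning; but as written it rests on the base-change identity whose proof you have not actually supplied. To repair the argument, either cite the two references as the paper does and deduce the $\SE^+$ case by intersection, or prove the $\SR^+$ case by the direct Amice computation and find a genuinely new argument for $\SE^\dagger$ (e.g.\ the one in Colmez).
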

\begin{proof}
It suffices to treat the case $\eta=1$. By \cite[Lemme V.1.16]{Colmez-Langlands}, $\SE^\dagger\boxtimes\BZ_p^\times$ is a free $\SE^\dagger(\Gamma)$-module of rank 1 generated by $1+T$. By \cite[B.2.8]{P00}, $\SR^+\boxtimes\BZ_p^\times$ is a free $\SR^+(\Gamma)$-module also generated by $1+T$. We thus deduce that $\SE^+\boxtimes\BZ_p^\times=(\SE^\dagger)^{\psi=0}\cap(\SR^+)^{\psi=0}$ is a free $\SE^+(\Gamma)$-module generated by $1+T$. The last assertion follows by (\ref{eq:anti2}) and the fact that $\SE^+\boxtimes\BZ_p^\times$ is stable under $w_\delta$.
\end{proof}

\subsection{Construction of the correspondence}

We define
$$ D\boxtimes_\delta \RP^1 =\left\{ z=(z_1, z_2)\in D\times D \ | \
\Res_{\BZ_p^\times}(z_2)=w_\delta(\Res_{\BZ_p^\times}(z_1))
\right\},$$
and we equip $D\boxtimes_{\delta} \RP^1$ with the subspace topology of $D\times D$.
\begin{prop} \label{prop:GL2action}
 There exists a unique
continuous action of $\GL_2(\BQ_p)$ on $D\boxtimes_\delta \RP^1$ satisfying the
following conditions:
\begin{enumerate}
\item $w (z_1, z_2) =(z_2, z_1)$;
\item if $a\in \BQ_p^\times$, then $\wvec{a}{0}{0}{a} (z_1, z_2) =(\delta(a)z_1, \delta(a)z_2)
$;
\item if $a\in \BZ_p^\times$, then $\wvec{a}{0}{0}{1} (z_1, z_2)
=(\sigma_a (z_1), \delta(a)\sigma_{a^{-1}}(z_2)) $;
\item if $z=(z_1, z_2)$ and if $z'=\wvec{p}{0}{0}{1}(z)$, then
$\Res_{p\BZ_p}z'=\varphi(z_1)$ and
$\Res_{\BZ_p}(wz')=\delta(p)\psi(z_2)$;
\item for $b\in p\BZ_p$, if $z=(z_1, z_2)$ and if $z'=\wvec{1}{b}{0}{1}
z$, then $\Res_{\BZ_p}z'=\wvec{1}{b}{0}{1}z_1$ and $\Res_{p\BZ_p}
(wz') =u_b(\Res_{p\BZ_p}(z_2)) $, where
$$ u_b=\delta(1+b)\wvec{1}{-1}{0}{1}\circ w_\delta\circ
\wvec{(1+b)^{-2}}{b(1+b)^{-1}}{0}{1}\circ w_\delta \circ
\wvec{1}{1/(1+b)}{0}{1}. $$
\end{enumerate}
\end{prop}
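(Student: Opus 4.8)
The statement to prove is Proposition~\ref{prop:GL2action}: the existence and uniqueness of a continuous $\GL_2(\BQ_p)$-action on $D\boxtimes_\delta\RP^1$ satisfying (i)--(v). The overall strategy is the classical one of Colmez: rather than exhibiting a formula for an arbitrary element of $\GL_2(\BQ_p)$, one first builds a $\RB(\BQ_p)$-action on the ambient space $D$ (thought of as ``$D\boxtimes\BZ_p$''), then extends to $\RP^1$ by gluing two copies of $D$ along $\BZ_p^\times$ using the involution $w_\delta$, and finally checks that the resulting partial operations assemble into a genuine group action via a presentation of $\GL_2(\BQ_p)$. The only novelty here relative to \cite{Colmez-Langlands} is that $D$ is merely a twist of an \'etale $(\varphi,\Gamma)$-module and $\delta\in\widehat{\mathscr{T}}(L)$ rather than $\delta:\BQ_p^\times\to\mathcal{O}_L^\times$; since $w_\delta$ only depends on $\delta|_{\BZ_p^\times}$ and $\delta(\BZ_p^\times)\subseteq\mathcal{O}_L^\times$, all of Colmez's convergence estimates go through verbatim, so the proof is a matter of transcription with the twist carried along.

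First I would establish \emph{uniqueness}, which also dictates the construction. Conditions (ii) and (iii) pin down the action of the diagonal torus $\RT(\BQ_p)$; condition (v) with $b\in p\BZ_p$ together with (iv) determines the action of the upper-triangular unipotent radical $\RN(\BQ_p)$ (first on $p\BZ_p$-translations, then all of $\BQ_p$ by writing a general element as a product), so we get the Borel $\RB(\BQ_p)$; condition (iv) determines $\wvec{p}{0}{0}{1}$; and condition (i) gives $w$. Since $\GL_2(\BQ_p)$ is generated by $\RB(\BQ_p)$, $\wvec{p}{0}{0}{1}$ and $w$, continuity forces the action to be unique once it exists. The content of ``$\Res_{?}$'' in (iv),(v) is that the operator is specified on the $\BZ_p$-part and on the $p\BZ_p$-part separately, and these determine the full element of $D\times D$ because $D=D\boxtimes\BZ_p \oplus D\boxtimes p^{-1}\BZ_p^\times$-type decompositions allow one to reconstruct $z$ from its restrictions; one checks compatibility with the defining relation $\Res_{\BZ_p^\times}(z_2)=w_\delta(\Res_{\BZ_p^\times}(z_1))$ using Lemma~\ref{lem:anti} and~\eqref{eq:anti2}.

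For \emph{existence}, the plan is: (a) define a $\RP(\BQ_p)$-action on $D$ itself (via $\wvec{a}{b}{0}{1}\mapsto$ the obvious $\sigma_a\circ(\text{multiplication by }(1+T)^{b})$-type operators on the $(\varphi,\Gamma)$-module, twisted by $\delta$), using that $D\boxtimes\BZ_p$ and its translates make sense; (b) define $D\boxtimes_\delta\RP^1$ as the fiber product and transport the $\RP(\BQ_p)$-action plus the involution $w$ to it, noting that $w$ is well-defined precisely because $w_\delta$ is an involution (Colmez, \cite[V]{Treillis}) that intertwines the two $\BZ_p^\times$-restrictions; (c) verify the braid/Iwahori relations: $w^2=\id$ (clear), the relation expressing $\wvec{a}{0}{0}{1}$-conjugation of $w$ in terms of $\wvec{1}{0}{0}{a}$ (uses Lemma~\ref{lem:anti}), and the key hard relation, namely that the operator $u_b$ in (v) is exactly what is needed for $\wvec{1}{b}{0}{1}$ with $b\in p\BZ_p$ to be consistent with $w$ — this is the cocycle identity packaged in Colmez's $u_b$ formula. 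Step (c), and in particular checking that the $u_b$ really glue to give associativity of the would-be $\GL_2(\BQ_p)$-action (equivalently, that the relations in the standard presentation of $\GL_2(\BQ_p)$ by $\RB(\BQ_p)$, $w$, and $\wvec{p}{0}{0}{1}$ are satisfied), is where essentially all the work lies: it rests on the nontrivial identity $w_\delta\wvec{1}{b}{0}{1}w_\delta = $ (product of a unipotent, a $w_\delta$, and a torus element) for $b\in p\BZ_p$, proved by the explicit series~\eqref{eq:wD} and the change-of-variable $x\mapsto 1/x$ as in Example~\ref{example:calE+-stable}.

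\textbf{Main obstacle.} The genuinely delicate point is verifying the relation in (v) — i.e.\ that the formula for $u_b$ is forced and consistent — because it is the one place where the two ``charts'' $D$ and $D$ of $\RP^1$ overlap nontrivially under a unipotent element that does not preserve either chart's natural $\BZ_p$/$p\BZ_p$ decomposition; this is exactly the computation that in \cite{Colmez-Langlands} occupies the bulk of \S II. All of this goes through here because every estimate is controlled by $\delta|_{\BZ_p^\times}$ taking values in $\mathcal{O}_L^\times$, so I would organize the proof as ``recall Colmez's argument, insert the character twist, observe the estimates are unchanged.''
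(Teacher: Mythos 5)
Your proposal is correct and follows essentially the same route as the paper, which simply observes that the listed matrices generate $\GL_2(\BQ_p)$ (giving uniqueness) and cites \cite[Proposition II.1.8]{Colmez-Langlands} for existence, noting that Colmez's proof applies verbatim in the twisted setting because $w_\delta$ depends only on $\delta|_{\BZ_p^\times}$, which still lands in $\mathcal{O}_L^\times$. Your plan is a faithful unpacking of what lies inside that citation, so no gap.
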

\begin{proof}
It is easy to see that the matrices given in the proposition generate $\GL_2(\BQ_p)$. This implies the uniqueness. The existence follows by \cite[Proposition II.1.8]{Colmez-Langlands}. (Its proof applies to our more general situation.)
\end{proof}

We extend $\{ \cdot ,\cdot \}: \check{D}\times D\rightarrow L$
to a pairing $\{ \cdot ,\cdot   \}_{\RP^1}: (\check{D}\boxtimes_{\delta^{-1}}\RP^1)\times
(D\boxtimes_\delta\RP^1)\ra L $ by setting
\begin{equation}\label{eq:pairing}
\{ (z_1,z_2), (z'_1,z'_2) \}_{\RP^1} =
\{ z_1,z'_1\}+\{\Res_{p\BZ_p}(z_2), \Res_{p\BZ_p}(z'_2)\} .
\end{equation}
\begin{prop} \label{prop:pairing}
The pairing $\{ \cdot,\cdot \}_{\RP^1}: (\check{D}\boxtimes_{\delta^{-1}}\RP^1)\times
(D\boxtimes_\delta\RP^1)\ra L $ is perfect and  $\GL_2(\BQ_p)$-equivariant.
\end{prop}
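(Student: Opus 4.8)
The plan is to reduce the statement for the $\RP^1$-pairing to the already-established perfectness and equivariance of the pairing $\{\cdot,\cdot\}:\check D\times D\to L$ (end of \S2), exploiting the decomposition of $D\boxtimes_\delta\RP^1$ coming from the covering of $\RP^1$ by $\BZ_p$ and $p\BZ_p$. First I would address perfectness. An element $(z_1,z_2)\in D\boxtimes_\delta\RP^1$ is determined by $z_1\in D$ together with $\Res_{p\BZ_p}(z_2)\in D\boxtimes p\BZ_p$, the remaining component $\Res_{\BZ_p^\times}(z_2)$ being forced to equal $w_\delta(\Res_{\BZ_p^\times}(z_1))$; so as a topological $L$-vector space $D\boxtimes_\delta\RP^1\cong D\times(D\boxtimes p\BZ_p)$, and similarly on the dual side. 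Under these identifications the formula \eqref{eq:pairing} is literally the orthogonal direct sum of the perfect pairing $\{\cdot,\cdot\}$ on $\check D\times D$ (first summand) and the restriction of $\{\cdot,\cdot\}$ to $(\check D\boxtimes p\BZ_p)\times(D\boxtimes p\BZ_p)$ (second summand). Thus perfectness of $\{\cdot,\cdot\}_{\RP^1}$ follows once I check that $\{\cdot,\cdot\}$ restricts to a perfect pairing between $D\boxtimes p\BZ_p=\varphi\psi(D)$ and $\check D\boxtimes p\BZ_p$; this is immediate from $\{x,\varphi\psi(y)\}=\{\psi x,\psi y\}$ (iterating \eqref{eq:pairing-formula}) together with surjectivity of $\psi$ and the fact that $\varphi\psi$ is an idempotent, so that $D=\varphi\psi(D)\oplus(1-\varphi\psi)(D)$ is an orthogonal decomposition under $\{\cdot,\cdot\}$ with $\check D\boxtimes p\BZ_p$ the exact annihilator of $D\boxtimes\BZ_p^\times$.

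Next I would verify $\GL_2(\BQ_p)$-equivariance. Since the matrices listed in Proposition~\ref{prop:GL2action} generate $\GL_2(\BQ_p)$, it suffices to check $\{g\cdot z, g\cdot z'\}_{\RP^1}=\{z,z'\}_{\RP^1}$ for $g$ running over $w$, the central torus $\wvec{a}{0}{0}{a}$, the torus $\wvec{a}{0}{0}{1}$ with $a\in\BZ_p^\times$, the element $\wvec{p}{0}{0}{1}$, and the unipotent $\wvec{1}{b}{0}{1}$ with $b\in p\BZ_p$ — noting that on $\check D\boxtimes_{\delta^{-1}}\RP^1$ the center acts through $\delta^{-1}$, matching the appearance of $\delta^{-1}$ in the proposition applied to $\check D$. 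For $w$ the verification is symmetric in the two components up to swapping the roles of $\RP^1=\BZ_p\cup p\BZ_p$ via $w$, so I would rewrite \eqref{eq:pairing} in the symmetric form $\{z_1,z_1'\}+\{\Res_{p\BZ_p}(z_2),\Res_{p\BZ_p}(z_2')\}$ versus $\{z_2,z_2'\}+\{\Res_{p\BZ_p}(z_1),\Res_{p\BZ_p}(z_1')\}$ and check these agree, which again comes down to $\Res_{p\BZ_p}$ being the orthogonal idempotent $\varphi\psi$ and the relation between $w_\delta$ on $D$ and $w_{\delta^{-1}}$ on $\check D$ being adjoint for $\{\cdot,\cdot\}$ — this last adjointness is the technical heart and I would extract it from \cite[V]{Colmez-Langlands}, where the analogous statement is proved for étale $D$ and whose proof only uses the defining formula \eqref{eq:wD} and \eqref{eq:pairing-formula}, hence applies verbatim to our twisted $D$. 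The torus and center cases are direct from the explicit action in Proposition~\ref{prop:GL2action}(2)(3) plus $\{\sigma_a x,\sigma_a y\}=\{x,y\}$ (which holds because $\sigma_{-1}$ intertwines the pairing correctly; concretely $\{\sigma_a x,\sigma_a y\}=\res_0(\sigma_{-a}(x)\sigma_a(y))=\res_0(\sigma_{-1}(x)(y))$ after a change of variable compatible with $\res_0$).

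For $\wvec{p}{0}{0}{1}$ and $\wvec{1}{b}{0}{1}$ I would use the characterizations in Proposition~\ref{prop:GL2action}(4)(5): these only pin down $\Res_{p\BZ_p}$ of the relevant components, but that is exactly what enters \eqref{eq:pairing}, so the computation is self-contained modulo the identity $\{\Res_{p\BZ_p}\varphi(x),\Res_{p\BZ_p}\varphi(y)\}=\{\varphi x,\varphi y\}=\{\psi\varphi x,y\}$-type manipulations together with the adjointness of $w_\delta$. For the unipotent case the operator $u_b$ is a conjugate of $w_\delta$ by affine maps, so once the $w$-case is done the $\wvec{1}{b}{0}{1}$-case follows by combining the $w$-equivariance with the equivariance under the affine matrices $\wvec{1}{c}{0}{1}$, $c\in\BQ_p$, which are themselves products of the generators already handled. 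The main obstacle I anticipate is bookkeeping: proving the adjointness $\{w_{\delta^{-1}}(\check z),z'\} = \{\check z, w_\delta(z')\}$ on $D\boxtimes\BZ_p^\times$ cleanly, and keeping the $\delta$ versus $\delta^{-1}$ twists and the $\sigma_{-1}$'s in \eqref{eq:wD} straight through the limit; everything else is a mechanical check on generators. I would therefore state this adjointness as an auxiliary lemma, cite \cite[Lemme V.2.2 and its proof]{Colmez-Langlands} for it (the proof being insensitive to the twist, exactly as noted before Lemma~\ref{lem:anti}), and then assemble the five generator computations.
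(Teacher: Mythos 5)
Your plan is sound and lands on the same underlying argument as the paper, but the paper itself dispenses with the whole thing in one line: it simply says that the statement ``follows immediately from [Colmez, Th\'eor\`eme II.3.1],'' with the tacit understanding (made explicit two propositions earlier, for the $\GL_2$-action) that Colmez's proof is insensitive to passing from the \'etale $D$ with $\delta=\delta_D$ to a twist of an \'etale $D$ with an arbitrary $\delta$, since it only uses formulas like \eqref{eq:wD} and \eqref{eq:pairing-formula} that make sense in the twisted setting. What you have written is essentially a reconstruction of Colmez's proof together with a justification of why it carries over, which is more than the paper demands but is certainly a legitimate (and more informative) way to present the argument: the decomposition $D\boxtimes_\delta\RP^1\cong D\times(D\boxtimes p\BZ_p)$, the reduction of perfectness to orthogonality of $\varphi\psi$ and $1-\varphi\psi$ under $\{\cdot,\cdot\}$, and the generator-by-generator check of equivariance with the $w_\delta$-adjointness isolated as the technical lemma are indeed the ingredients of Colmez's Th\'eor\`eme II.3.1.

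Two small points to tighten. First, your citation for the adjointness $\{w_{\delta^{-1}}(\check z),z'\}=\{\check z,w_\delta(z')\}$ is off: Lemme V.2.2 of Colmez (which the paper invokes as Lemma~\ref{lem:anti}) is the $\Gamma$-semilinearity $w_\delta\circ\gamma=\delta(\chi(\gamma))\gamma^{-1}\circ w_\delta$, not the adjointness under $\{\cdot,\cdot\}$; the latter is part of the material around Th\'eor\`eme II.3.1 in \S II of Colmez, and that is the right place to point to. Second, in the perfectness step your orthogonality of $\varphi\psi(D)$ and $(1-\varphi\psi)(D)$ is stated as ``immediate from $\{x,\varphi\psi(y)\}=\{\psi x,\psi y\}$,'' but as written this only gives one of the two needed orthogonality relations; you also need the transposed adjunction $\{\varphi(a),b\}=\{a,\psi(b)\}$ (which does hold --- it is the same identity \eqref{eq:pairing-formula} read through the transposed pairing $D\times\check D\to L$) in order to kill $\{\varphi\psi(x),(1-\varphi\psi)(y)\}$. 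Neither issue affects the correctness of the approach; they are just places where the bookkeeping you flagged as the main obstacle should be done a little more carefully.
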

\begin{proof}
This follows immediately from \cite[Th\'eor\`eme II.3.1]{Colmez-Langlands}.
\end{proof}

Now Let $D$ be of rank two. Then
$\det D$ is of the form $\mathcal{O}_\SE(\delta'_D)$ or $\SE(\delta'_D)$ for some $\delta'_D\in\widehat{\mathscr{T}}(L)$. Let
$\delta_D=\epsilon^{-1}\delta'_D$, and we denote $w_{\delta_D}, D\boxtimes_{\delta_D}\RP^1$ by $w_D, D\boxtimes \RP^1$ for simplicity. For $z=(z_1,z_2)\in D\boxtimes \RP^1$, set $\Res_{\BZ_p}(z_1,z_2)=z_1$. We then define
$$D^\natural \boxtimes \RP^1=\{ z\in D\boxtimes \RP^1 \ |\
\Res_{\BZ_p}(\wvec{p^n}{0}{0}{1}z)\in D^\natural, \forall n\in\mathbb{N} \}.$$

\begin{thm} $($\cite[Th\'eor\`eme II.3.1]{Colmez-Langlands}$)$
Let $D$ be an irreducible rank $2$ \'etale $(\varphi,\Gamma)$-module over $\SO_\SE$. Then the following hold:
\begin{enumerate}
\item The submodule $D^\natural \boxtimes \RP^1$ of $D\boxtimes
\RP^1$ is stable under the action of $\GL_2(\BQ_p)$.
\item The quotient $\GL_2(\BQ_p)$-representation $\Pi(D)=D\boxtimes \RP^1/ D^\natural \boxtimes
\RP^1$ is an object of $\Rep_{\mathcal{O}_L}\GL_2(\BQ_p)$, and has central
character $\delta_D$.  The continuous $\GL_2(\BQ_p)$-representation $D^\natural \boxtimes \RP^1$ is naturally
isomorphic to $\Pi(D)^*\otimes \delta_D$. Hence we have the following
exact sequence
$$ 0 \longrightarrow\Pi(D)^* \otimes \delta_D \longrightarrow
 D \boxtimes \RP^1 \longrightarrow\Pi(D) \longrightarrow 0.$$
\end{enumerate}
\end{thm}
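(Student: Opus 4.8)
The plan is to follow Colmez's proof of \cite[Th\'eor\`eme II.3.1]{Colmez-Langlands} verbatim, since, as the paper emphasizes elsewhere, Colmez's arguments for \'etale $(\varphi,\Gamma)$-modules over $\SO_\SE$ extend with only cosmetic changes to the twisted setting. Concretely, I would proceed as follows.

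\textbf{Step 1: Stability of $D^\natural\boxtimes\RP^1$.} The definition of $D^\natural\boxtimes\RP^1$ is engineered so that it is manifestly stable under the Borel $\RB(\BQ_p)$: the conditions $\Res_{\BZ_p}(\wvec{p^n}{0}{0}{1}z)\in D^\natural$ are invariant under $\wvec{p}{0}{0}{1}$ by definition, under $\wvec{a}{0}{0}{a}$ and $\wvec{a}{0}{0}{1}$ for $a\in\BZ_p^\times$ since $D^\natural$ is $\Gamma$-stable and scalar-stable, and under $\wvec{1}{b}{0}{1}$ for $b\in\BZ_p$ using Proposition \ref{prop:GL2action}(v) together with the fact that $D^\natural$ is $\psi$-stable and $\varphi$-stable. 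Since $\GL_2(\BQ_p)$ is generated by $\RB(\BQ_p)$ and $w$, the only remaining point is $w$-stability, i.e. that $z=(z_1,z_2)\in D^\natural\boxtimes\RP^1$ implies $(z_2,z_1)\in D^\natural\boxtimes\RP^1$; equivalently $z_1,z_2\in D^\natural$ whenever all the translates $\Res_{\BZ_p}(\wvec{p^n}{0}{0}{1}z)$ lie in $D^\natural$. This is the heart of Colmez's argument and uses the minimality and the explicit description of $D^\natural$ from \cite{Treillis}, together with the irreducibility of $D$ to ensure $D^\natural$ is proper and stable under $\psi$ being surjective. I would cite \cite[\S II]{Colmez-Langlands} here rather than reproduce the combinatorics of treillis.

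\textbf{Step 2: Identification of the quotient and the dual.} Having established stability, set $\Pi(D)=D\boxtimes\RP^1/D^\natural\boxtimes\RP^1$. To see $\Pi(D)\in\Rep_{\SO_L}\GL_2(\BQ_p)$ one checks $p$-adic separatedness and completeness (inherited from $D\boxtimes\RP^1$), $p$-torsion-freeness, and that each $\Pi(D)/p^n$ is smooth of finite length with a central character — all of which reduce to the corresponding statements for $D/p^n$ and follow from Colmez. The central character is $\delta_D$ by Proposition \ref{prop:GL2action}(ii). For the duality $D^\natural\boxtimes\RP^1\cong\Pi(D)^*\otimes\delta_D$, I would invoke Proposition \ref{prop:pairing}: the pairing $\{\cdot,\cdot\}_{\RP^1}$ on $(\check D\boxtimes_{\delta^{-1}}\RP^1)\times(D\boxtimes_\delta\RP^1)$ is perfect and $\GL_2(\BQ_p)$-equivariant, and the key computation is that $D^\natural\boxtimes\RP^1$ and $\check D^\natural\boxtimes\RP^1$ are exact orthogonal complements of one another under $\{\cdot,\cdot\}_{\RP^1}$ — this is again \cite[\S II.3]{Colmez-Langlands}, using \eqref{eq:pairing-formula} and the characterization of $D^\natural$ as the minimal $\psi$-stable treillis (so that the annihilator condition matches the $\Res_{\BZ_p}(\wvec{p^n}{0}{0}{1}z)\in D^\natural$ condition via the adjunction $\{x,\varphi(y)\}=\{\psi(x),y\}$). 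Taking $\check D$ in place of $D$ and dualizing then yields $D^\natural\boxtimes\RP^1\cong(D\boxtimes\RP^1/D^\natural\boxtimes\RP^1)^*\otimes\delta_D=\Pi(D)^*\otimes\delta_D$, and the displayed short exact sequence is immediate.

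\textbf{Main obstacle.} The genuinely delicate point is Step 1, the $w$-stability of $D^\natural\boxtimes\RP^1$ — equivalently the statement that the three conditions defining membership already force both coordinates into $D^\natural$. This is where irreducibility of $D$ is used (to control $D^\natural$), and it is the part of Colmez's proof that is least formal. Fortunately nothing in it is special to the untwisted case: the operator $w_\delta$ is defined by exactly the same formula \eqref{eq:wD}, the treillis $D^\natural$ depends only on the underlying $\SO_\SE$-module with its $\varphi,\psi,\Gamma$-structure, and the relations in Proposition \ref{prop:GL2action} are identical. So the proof is simply: \emph{the matrices in Proposition \ref{prop:GL2action} generate $\GL_2(\BQ_p)$, the Borel-stability is checked directly from (ii)--(v) as above, and the $w$-stability plus the orthogonality of treillis are \cite[Th\'eor\`eme II.3.1]{Colmez-Langlands}, whose proof goes through unchanged.}
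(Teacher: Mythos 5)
The paper does not prove this theorem: the statement is tagged with the citation $\cite[\text{Th\'eor\`eme II.3.1}]{\text{Colmez-Langlands}}$ precisely because it is quoted from Colmez and used as a black box, with no argument supplied. Your proposal is a reasonable sketch of what Colmez's argument does, and since you ultimately defer both of its genuinely hard steps --- the $w$-stability of $D^\natural\boxtimes\RP^1$ via the combinatorics of treillis, and the orthogonality of $D^\natural\boxtimes\RP^1$ and $\check{D}^\natural\boxtimes\RP^1$ under $\{\cdot,\cdot\}_{\RP^1}$ --- back to the same reference, you are in effect doing what the paper does. One correction to your framing: the theorem is stated for an honest irreducible rank-$2$ \'etale $(\varphi,\Gamma)$-module over $\SO_\SE$, not a twist of one, so your opening remark about extending Colmez's argument to ``the twisted setting'' is a non sequitur here; no adaptation is needed, and the theorem is literally Colmez's. (The twists only enter later, in \S3.1 and \S3.3, where the paper is careful to point out which constructions extend; this theorem is not among the things that need extending.) Your verification of Borel stability and the dualization bookkeeping in Step 2 are both correct, and your identification of the ``main obstacle'' is accurate.
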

\noindent Note that $\check{D}\cong D(\delta_D^{-1})$ because $D$ is of rank two. Hence $\Pi(D)^*\otimes\delta_{D}$ is isomorphic to
$$(\Pi(\check{D})\otimes\delta_D)^*\otimes\delta_D=\Pi(\check{D})^*.$$
\begin{cor}
If $D$ is an irreducible \'etale $(\varphi,\Gamma)$-module of rank 2 over
$\SE$, then $D^\natural \boxtimes \RP^1$ is stable under
the $\GL_2(\BQ_p)$-action and the quotient representation $\Pi(D)=D\boxtimes
\RP^1/D^\natural \boxtimes \RP^1$ is an admissible unitary representation of $\GL_2(\BQ_p)$. Moreover, $D^\natural \boxtimes \RP^1$ is
naturally isomorphic to the contragredient representation $\Pi(\check{D})^*$ .
\end{cor}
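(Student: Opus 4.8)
The plan is to bootstrap from the theorem over $\SO_\SE$ (the integral statement) to the rational statement over $\SE$ by a flat base change argument, together with the standard dictionary between $\SE$-representations and $\SO_\SE$-representations. First I would observe that an irreducible \'etale $(\varphi,\Gamma)$-module $D$ of rank $2$ over $\SE$ corresponds, under Fontaine's equivalence, to a $2$-dimensional irreducible $L$-linear representation $V$ of $G_{\BQ_p}$, and $D$ arises by base change $D = D_0[1/p]$ from some $G_{\BQ_p}$-stable $\SO_\SE$-lattice $D_0 = \RD(T)$, where $T$ is a $G_{\BQ_p}$-invariant $\SO_L$-lattice in $V$. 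Since $V$ is irreducible, $D_0$ is an irreducible \'etale $(\varphi,\Gamma)$-module over $\SO_\SE$ in the sense needed (it has no proper $\SO_\SE$-saturated $(\varphi,\Gamma)$-submodule), so the theorem applies to $D_0$: we get the exact sequence
\[
0 \longrightarrow \Pi(D_0)^* \otimes \delta_{D_0} \longrightarrow D_0 \boxtimes \RP^1 \longrightarrow \Pi(D_0) \longrightarrow 0
\]
with $\Pi(D_0) \in \Rep_{\SO_L}\GL_2(\BQ_p)$, central character $\delta_{D_0}$, and $D_0^\natural \boxtimes \RP^1 \cong \Pi(D_0)^* \otimes \delta_{D_0}$.

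Next I would invert $p$. By construction $D^\natural = D_0^\natural[1/p]$, and since the formation of $D \boxtimes_\delta \RP^1$ and of $\Res_{\BZ_p}$, $\Res_{p\BZ_p}$, $w_\delta$ all commute with $-\otimes_{\SO_L} L$ (these are defined by $\SO_L$-linear formulas, convergent $p$-adically), one has $D \boxtimes \RP^1 = (D_0 \boxtimes \RP^1)[1/p]$ and $D^\natural \boxtimes \RP^1 = (D_0^\natural \boxtimes \RP^1)[1/p]$ compatibly with the $\GL_2(\BQ_p)$-actions; in particular $D^\natural \boxtimes \RP^1$ is $\GL_2(\BQ_p)$-stable and the quotient $\Pi(D) := (D \boxtimes \RP^1)/(D^\natural \boxtimes \RP^1)$ equals $\Pi(D_0)[1/p] = \Pi(D_0) \otimes_{\SO_L} L$. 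Since $\Pi(D_0)$ is $p$-adically separated, complete, $p$-torsion free with $\Pi(D_0)/p^n \in \Rep_\tors \GL_2(\BQ_p)$, the space $\Pi(D) = \Pi(D_0)[1/p]$ with its Banach topology (unit ball $\Pi(D_0)$) is an admissible unitary $L$-Banach representation of $\GL_2(\BQ_p)$ — this is the standard correspondence between $\Rep_{\SO_L}\GL_2(\BQ_p)$ and admissible unitary representations.

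For the duality statement I would dualize: from $D^\natural \boxtimes \RP^1 \cong \Pi(D_0)^* \otimes \delta_{D_0}$ over $\SO_L$, inverting $p$ gives $D^\natural \boxtimes \RP^1 = (\Pi(D_0)^* \otimes \delta_{D_0})[1/p] \cong \Pi(D)^* \otimes \delta_D$ where now $(-)^*$ denotes the continuous $L$-linear dual and $\delta_D = \delta_{D_0}[1/p]$; the identification of duals is compatible with inverting $p$ because $\Pi(D_0)^* = \Hom_{\SO_L}(\Pi(D_0),\SO_L)$ and $\Hom_{\SO_L}(\Pi(D_0),\SO_L)[1/p] = \Hom_L(\Pi(D),L) = \Pi(D)^*$. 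Finally, using $\check D \cong D(\delta_D^{-1})$ (valid since $D$ has rank two) exactly as in the remark following the theorem, one rewrites $\Pi(D)^* \otimes \delta_D \cong \Pi(\check D)^*$, giving the asserted isomorphism $D^\natural \boxtimes \RP^1 \cong \Pi(\check D)^*$.

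The main obstacle — really the only point requiring care rather than bookkeeping — is verifying that admissibility and the unitary Banach structure survive base change, i.e. that $\Pi(D_0)[1/p]$ genuinely lies in the category of admissible unitary representations and that its dual matches $(\Pi(D_0)^*)[1/p]$ as a topological $L$-vector space; this is where one invokes Schneider–Teitelbaum's equivalence between $\Rep_{\SO_L}\GL_2(\BQ_p)$ (after inverting $p$) and admissible unitary $L$-representations, together with the fact that $D_0$ being irreducible over $\SO_\SE$ forces the hypotheses of the theorem. Everything else is a formal consequence of flatness of $\SO_L \to L$ and the $p$-adic convergence of Colmez's formulas defining $w_\delta$ and the $\GL_2(\BQ_p)$-action.
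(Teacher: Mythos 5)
Your proposal is correct and matches the intended (unwritten) argument: the paper states this Corollary without proof as an immediate consequence of the integral theorem over $\SO_\SE$, and the passage you describe — choose a lattice $T \subset V$, apply the theorem to $D_0 = \RD(T)$, invert $p$ using the $\SO_L$-linearity of the defining conditions and the flatness of $\SO_L \to L$, then invoke the Schneider--Teitelbaum dictionary between $\Rep_{\SO_L}\GL_2(\BQ_p)$ and admissible unitary Banach representations and the boundedness of continuous $L$-functionals to match duals — is exactly what is meant. The only point worth flagging for polish is that $\Pi(D_0)^*$ should be read as the \emph{continuous} $\SO_L$-dual throughout, so that $\Pi(D_0)^*[1/p]$ is identified with the continuous $L$-dual of the Banach space $\Pi(D)$; with that understood, the argument is complete.
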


\subsection{Locally analytic vectors}
Now suppose $D$ is a twist of an irreducible rank $2$ \'etale $(\varphi,\Gamma)$-module over $\SE$. The following proposition follows immediately from $($\cite[Lemme V.2.4]{Colmez-Langlands}$)$.
\begin{prop}
 $D^\dagger\boxtimes
\BZ_p^\times$ is stable under the action of $w_D$.
\end{prop}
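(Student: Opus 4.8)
The plan is to reduce to the étale case by untwisting, and then to invoke \cite[Lemme V.2.4]{Colmez-Langlands} directly. First I would write $D=D_0\otimes_{\SE}\SE(\eta)$ with $D_0$ an irreducible rank $2$ étale $(\varphi,\Gamma)$-module over $\SE$ and $\eta\in\widehat{\mathscr{T}}(L)$, and fix a standard basis $e$ of $\SE(\eta)$. Since $e$ already lies in the overconvergent model $\SE^\dagger(\eta)\subseteq\SE(\eta)$ and overconvergence is unaffected by twisting by a rank $1$ overconvergent module, $D^\dagger=D_0^\dagger\otimes_{\SE^\dagger}\SE^\dagger(\eta)=D_0^\dagger\cdot e$; in particular $\iota\colon x\mapsto x\otimes e$ is an $L$-linear isomorphism $D_0\xrightarrow{\sim}D$ carrying $D_0^\dagger$ onto $D^\dagger$.

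Next I would record how the structure maps transform under $\iota$: from (\ref{eq:rank1}), for $a,i\in\BZ_p^\times$ one has $\varphi\circ\iota=\eta(p)\,\iota\circ\varphi$, $\psi\circ\iota=\eta(p)^{-1}\iota\circ\psi$, $\sigma_a\circ\iota=\eta(a)\,\iota\circ\sigma_a$, while multiplication by $(1+T)^i$ commutes with $\iota$. Hence $\varphi^n\psi^n$, and therefore $\Res_{\BZ_p^\times}=1-\varphi\psi$, are intertwined by $\iota$, so $\iota$ restricts to bijections $D_0\boxtimes\BZ_p^\times\xrightarrow{\sim}D\boxtimes\BZ_p^\times$ and $D_0^\dagger\boxtimes\BZ_p^\times\xrightarrow{\sim}D^\dagger\boxtimes\BZ_p^\times$. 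Feeding these identities into the defining series (\ref{eq:wD}) for $w_{\delta_D}$, the $i$-th summand picks up exactly the extra factor $\eta(-i^2)=\eta(-1)\eta(i)^2$ coming from $\sigma_{-i^2}$; since each such factor lies in $\mathcal{O}_L^\times$ and is independent of $n$, convergence is unaffected, and one obtains $w_{\delta_D}\circ\iota=\eta(-1)\,\iota\circ w_{\delta'}$ on $D_0\boxtimes\BZ_p^\times$, where $\delta'\in\widehat{\mathscr{T}}(L)$ is any character with $\delta'|_{\BZ_p^\times}=\delta_D|_{\BZ_p^\times}\cdot(\eta|_{\BZ_p^\times})^{-2}$. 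Because $w_\delta$ depends only on $\delta|_{\BZ_p^\times}$, and $\det D=\det D_0\otimes\SE(\eta)^{\otimes 2}$ forces $\delta_D=\delta_{D_0}\,\eta^2$ as characters of $\BQ_p^\times$, we may take $\delta'=\delta_{D_0}$; thus $\iota$ intertwines $w_D=w_{\delta_D}$ with the scalar multiple $\eta(-1)\,w_{D_0}$ of $w_{D_0}$.

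Finally, by \cite[Lemme V.2.4]{Colmez-Langlands} applied to the étale module $D_0$, the subspace $D_0^\dagger\boxtimes\BZ_p^\times$ is stable under $w_{D_0}$, hence under $\eta(-1)\,w_{D_0}$; transporting back along $\iota$, which matches $D_0^\dagger\boxtimes\BZ_p^\times$ with $D^\dagger\boxtimes\BZ_p^\times$, gives stability of $D^\dagger\boxtimes\BZ_p^\times$ under $w_D$. The only genuinely delicate point is the bookkeeping in the middle step: tracking the character shift and the scalar $\eta(-1)$ in the term-by-term analysis of (\ref{eq:wD}), and verifying that the extra factors $\eta(-i^2)$ are uniform units so that the limit defining $w_{\delta_D}$ still converges and is intertwined summand by summand; everything else is formal.
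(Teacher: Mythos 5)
Your proof is correct and fills in, carefully, exactly what the paper leaves implicit by saying the proposition ``follows immediately from [Lemme V.2.4]'': you untwist $D=D_0\otimes_{\SE}\SE(\eta)$, track how $\varphi,\psi,\sigma_a$ transform under $\iota(x)=x\otimes e$, note the $\eta(p)^{\pm n}$ factors cancel in $\varphi^n\psi^n$ while $\sigma_{-i^2}$ produces $\eta(-1)\eta(i)^2$ so that $w_{\delta_D}\circ\iota=\eta(-1)\,\iota\circ w_{\delta_D\eta^{-2}}$, and then verify $\delta_D\eta^{-2}=\delta_{D_0}$ from $\det D=\det D_0\otimes\SE(\eta)^{\otimes 2}$. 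This reduces the statement to the \'etale case, where Colmez's lemma applies. The paper takes the same route (indeed this is the only reasonable reading of its terse citation, given the preceding remark that the series for $w_\delta$ only depends on $\delta|_{\BZ_p^\times}$ and converges for twists); you have simply made the bookkeeping explicit.
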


By \cite[Th\'eor\`eme V.1.12(iii)]{Colmez-Langlands}, we have $D_\rig\boxtimes
\BZ_p^\times=\SR(\Gamma)\otimes_{\SE^\dagger(\Gamma)}(D^\dagger\boxtimes
\BZ_p^\times)$. Then for $\Delta=\SR(\eta), *=\delta$, or $\Delta=D_\rig,*=\delta_D$,
we extend the $w_*$-action to $\Delta\boxtimes
\BZ_p^\times$ by setting
\[
w_*(\lambda \otimes z) =\iota_{*}(\lambda ) \otimes w_*(z).
\]
For $\Delta=\SE^\dagger(\eta),\SR^+(\eta),\SR(\eta), *=\delta$, or $\Delta=D^\dagger,D_\rig,*=\delta_D$, we set
\[
\Delta\boxtimes_{*}\RP^1=
\{ (z_1,z_2)\in \Delta\times \Delta,
\Res_{\BZ_p^\times}(z_2)=w_*(\Res_{\BZ_p^\times}(z_1)) \},
\]
and we equip $\Delta\boxtimes_{*}\RP^1$ with the subspace topology of $\Delta\times\Delta$. Henceforth we denote $D^\dagger\boxtimes_{\delta_D}\RP^1$, $D_\rig\boxtimes_{\delta_D}\RP^1$ by $D^\dagger\boxtimes\RP^1$, $D_\rig\boxtimes\RP^1$ for simplicity.

By Proposition
\ref{prop:GL2action}, it is clear that both $\SE^\dagger(\eta)\boxtimes_\delta\RP^1$ and $D^\dagger\boxtimes\RP^1$ are stable under $\GL_2(\BQ_p)$. Since $D^\dagger$ is dense in $D_\rig$ for any $(\varphi,\Gamma)$-module $D$ over $\SE$, we extend the $\GL_2(\BQ_p)$-actions on $\SE^\dagger(\eta)\boxtimes_\delta\RP^1$ and $D^\dagger\boxtimes\RP^1$ to continuous $\GL_2(\BQ_p)$-actions on $\SR(\eta)\boxtimes_\delta\RP^1$ and $D_\rig\boxtimes \RP^1$ which satisfy the formulas listed in Proposition \ref{prop:GL2action} by continuity. This yields the following proposition.

\begin{prop} \label{prop:rank1-GL2action}
There exists a unique continuous
action of $\GL_2(\BQ_p)$ on $\Delta\boxtimes_* \RP^1$ satisfying the
formulas listed in Proposition \ref{prop:GL2action}.
\end{prop}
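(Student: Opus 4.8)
The plan is to treat the two cases $\Delta = D^\dagger, D_\rig$ and $\Delta = \SE^\dagger(\eta), \SR^+(\eta), \SR(\eta)$ in parallel, reducing everything to the already-established case of $D\boxtimes_\delta\RP^1$ over $\SE$ (Proposition \ref{prop:GL2action}) by density and continuity. The key structural point, already recorded in the paragraph preceding the statement, is that $D^\dagger$ is dense in $D_\rig$ (and likewise $\SE^\dagger(\eta)$ is dense in $\SR^+(\eta)$ and in $\SR(\eta)$, while $\SE^\dagger$ sits inside $\SE$), so the overconvergent objects $D^\dagger\boxtimes\RP^1$ and $\SE^\dagger(\eta)\boxtimes_\delta\RP^1$ are dense in the respective $\boxtimes_*\RP^1$ objects. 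Thus uniqueness is immediate: any continuous $\GL_2(\BQ_p)$-action satisfying the formulas of Proposition \ref{prop:GL2action} is determined on the dense overconvergent subspace (where the formulas already pin it down, the matrices listed there generating $\GL_2(\BQ_p)$), hence everywhere.

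For existence, the first step is to verify that the subspaces $\SE^\dagger(\eta)\boxtimes_\delta\RP^1\subset\SE(\eta)\boxtimes_\delta\RP^1$ and $D^\dagger\boxtimes\RP^1\subset D\boxtimes\RP^1$ are stable under the $\GL_2(\BQ_p)$-action constructed in Proposition \ref{prop:GL2action}; for $D^\dagger$ this is the stability statement just before the proposition (applying $w_D$ preserves $D^\dagger\boxtimes\BZ_p^\times$ by the cited Proposition), and for $\SE^\dagger(\eta)$ it follows from Proposition \ref{prop:rank1} together with the fact that the generating matrices act through $\varphi,\psi,\sigma_a$ and $w_\delta$, all of which preserve $\SE^\dagger(\eta)\boxtimes\BZ_p^\times$ by Proposition \ref{prop:rank1} and (\ref{eq:anti2}). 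The second step is to extend each generator's action by continuity from the dense subspace to the completion $\Delta\boxtimes_*\RP^1$: since each generator acts via a continuous operator on $\Delta\times\Delta$ preserving the defining relation $\Res_{\BZ_p^\times}(z_2)=w_*(\Res_{\BZ_p^\times}(z_1))$ — here one uses that $w_*$ on $\Delta\boxtimes\BZ_p^\times$ was defined precisely by the $\iota_*$-semilinear extension $w_*(\lambda\otimes z)=\iota_*(\lambda)\otimes w_*(z)$ along $\SR(\Gamma)\otimes_{\SE^\dagger(\Gamma)}(-)$, compatibly with the overconvergent $w_*$ — the operators extend uniquely and continuously. The third step is to check that the extended operators still satisfy the group relations of $\GL_2(\BQ_p)$: since the relations hold on the dense subspace and both sides of each relation are continuous maps, they hold on the completion.

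The main obstacle I expect is the second and third steps taken together: one must make sure that the formulas in Proposition \ref{prop:GL2action} — particularly the twisted $u_b$ formula in part (v), which involves composing $w_\delta$ with unipotent translations and a diagonal matrix — define \emph{continuous} operators for the Fr\'echet/inductive-limit topologies on $\SR^+(\eta)$, $\SR(\eta)$ and $D_\rig$, rather than merely on the Banach-type topology of $\SE$. Concretely, $\Res_{\BZ_p^\times}$ and $\Res_{p\BZ_p}$ are continuous projectors on these rings, $w_*$ is continuous by its construction as a base change of the overconvergent $w_*$ along the flat continuous ring map $\SE^\dagger(\Gamma)\hookrightarrow\SR(\Gamma)$, and the unipotent matrices $\wvec{1}{b}{0}{1}$ act via the continuous $\GL_2(\BQ_p)$-action on $D$ restricted appropriately; assembling these into the composite $u_b$ and checking the cocycle/relation identities survive passage to the limit is the technical heart. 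This is exactly the content of \cite[Proposition II.1.8, Lemme V.2.2, Lemme V.2.4]{Colmez-Langlands} in Colmez's setting, and since (as noted after Proposition \ref{prop:GL2action}) his arguments apply verbatim to twists of \'etale $(\varphi,\Gamma)$-modules and to the rank-one modules $\SR(\eta)$, the proposition follows by citing these results and invoking the density/continuity argument above.
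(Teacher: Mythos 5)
Your proposal follows essentially the same route as the paper, which records the (terse) proof in the paragraph immediately preceding the statement: the overconvergent objects $\SE^\dagger(\eta)\boxtimes_\delta\RP^1$ and $D^\dagger\boxtimes\RP^1$ are $\GL_2(\BQ_p)$-stable subspaces via Proposition~\ref{prop:GL2action}, and one extends the action by density and continuity to $\SR(\eta)\boxtimes_\delta\RP^1$ and $D_\rig\boxtimes\RP^1$; the group relations pass to the closure automatically.

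One slip should be corrected: $\SE^\dagger(\eta)$ is \emph{not} dense in $\SR^+(\eta)$ --- indeed $\SE^\dagger\not\subset\SR^+$ at all, since elements of $\SE^\dagger$ are Laurent series with infinitely many negative-index terms (bounded on an annulus), whereas $\SR^+$ is the ring of power series on the open unit disk. The relevant intersection is $\SE^\dagger\cap\SR^+=\SE^+$. For the $\Delta=\SR^+(\eta)$ case you should either invoke density of $\SE^+(\eta)\boxtimes_\delta\RP^1$ in $\SR^+(\eta)\boxtimes_\delta\RP^1$ (with $w_\delta$-stability of $\SE^+(\eta)\boxtimes\BZ_p^\times$ supplied by Example~\ref{example:calE+-stable} and Proposition~\ref{prop:rank1}), or, more simply, first construct the continuous action on $\SR(\eta)\boxtimes_\delta\RP^1$ and then observe that $\SR^+(\eta)\boxtimes_\delta\RP^1$ is a $\GL_2(\BQ_p)$-stable closed subspace (all of $\varphi$, $\psi$, $\sigma_a$, and $w_\delta$ preserve $\SR^+(\eta)\boxtimes\BZ_p^\times$), so it inherits a continuous action with no separate density argument needed. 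With that adjustment your argument matches the paper's.
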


For $\Delta=\SR(\eta), *=\delta$, or $\Delta=D_\rig,*=\delta_D$, we set the pairing $\{\cdot,\cdot\}_{\RP^1}:\check{\Delta}\boxtimes_{*^{-1}}\RP^1\times
\Delta\boxtimes_{*}\RP^1\rightarrow L$
by formula (\ref{eq:pairing}).

\begin{prop} \label{prop:pairing,rank 1}
The pairing $\{\cdot,\cdot\}_{\RP^1}: \check{\Delta}\boxtimes_{*^{-1}}\RP^1\times
\Delta\boxtimes_{*}\RP^1\rightarrow L$ is perfect and $\GL_2(\BQ_p)$-equivariant.
\end{prop}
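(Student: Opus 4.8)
The plan is to deduce Proposition \ref{prop:pairing,rank 1} from the already-established perfectness and $\GL_2(\BQ_p)$-equivariance of the pairing $\{\cdot,\cdot\}_{\RP^1}$ on the integral/overconvergent level (Proposition \ref{prop:pairing}) by a density-and-continuity argument, exactly parallel to the way the $\GL_2(\BQ_p)$-action on $\Delta\boxtimes_*\RP^1$ was obtained in Proposition \ref{prop:rank1-GL2action}. First I would check $\GL_2(\BQ_p)$-equivariance: the pairing is defined by the same formula (\ref{eq:pairing}) in terms of $\{\cdot,\cdot\}$ and $\Res_{p\BZ_p}$, and equivariance for the integral pairing is Proposition \ref{prop:pairing}; since $\check{D}^\dagger\boxtimes\RP^1$ (resp.\ $\SE^\dagger(\check\eta)\boxtimes_{\delta^{-1}}\RP^1$) is dense in $\check D_\rig\boxtimes\RP^1$ (resp.\ $\SR(\check\eta)\boxtimes_{\delta^{-1}}\RP^1$) and the $\GL_2(\BQ_p)$-actions are continuous, the identity $\{g\cdot x, g\cdot y\}_{\RP^1}=\{x,y\}_{\RP^1}$ extends from a dense subset to all of the space, provided one knows the pairing itself is separately continuous. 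So the bulk of the work is: (a) show $\{\cdot,\cdot\}_{\RP^1}$ is (separately, hence by bilinearity jointly on the relevant Fréchet/LF spaces) continuous, and (b) show it is perfect, i.e.\ induces a topological isomorphism of each factor onto the continuous dual of the other.

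For continuity, I would argue termwise on (\ref{eq:pairing}). The first term $\{z_1,z_1'\}$ is $\res_0(\sigma_{-1}(z_1)(z_1'))$, and the residue pairing $\SR(\check\eta)\times\SR(\eta)\to L$ (equivalently the pairing on $\D_\rig$ and its Tate dual valued in $\SR\frac{dT}{1+T}$) is well-known to be continuous and perfect in the topological sense — this is standard in the theory of $(\varphi,\Gamma)$-modules over the Robba ring (one reduces to continuity and perfectness of the residue pairing $\SR\times\SR\to L$, $f\otimes g\mapsto \res_0(fg)$, which follows from the description of the topology by the valuations $v^{\{s\}}$). The second term $\{\Res_{p\BZ_p}(z_2),\Res_{p\BZ_p}(z_2')\}=\{\varphi\psi(z_2),\varphi\psi(z_2')\}$ is continuous because $\varphi,\psi$ are continuous operators and again the residue pairing is continuous. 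Hence $\{\cdot,\cdot\}_{\RP^1}$ is separately continuous on $\check\Delta\boxtimes_{*^{-1}}\RP^1\times\Delta\boxtimes_{*}\RP^1$.

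For perfectness, I would transport the duality from the integral side. Since $\check{D}^\dagger\boxtimes\RP^1$ is dense in $\check D_\rig\boxtimes\RP^1$, the pairing $\{\cdot,\cdot\}_{\RP^1}$ on $\check D_\rig\boxtimes\RP^1\times D_\rig\boxtimes\RP^1$ is nondegenerate on the right (an element of $D_\rig\boxtimes\RP^1$ pairing to zero with all of $\check D_\rig\boxtimes\RP^1$ pairs to zero with the dense subspace $\check D^\dagger\boxtimes\RP^1$, whence, using compatibility of (\ref{eq:pairing}) under $D^\dagger\boxtimes\RP^1\hookrightarrow D_\rig\boxtimes\RP^1$ and perfectness there in the appropriate sense, it vanishes); nondegeneracy on the left is symmetric. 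To upgrade nondegeneracy to topological perfectness one uses that $D_\rig\boxtimes\RP^1$ and $\check D_\rig\boxtimes\RP^1$ are reflexive spaces of compact type (inductive limits of Banach spaces with compact transition maps, as is $\SR$ itself and finite free modules over it), for which a separately continuous nondegenerate pairing that is "dense-compatible" with a known perfect pairing on overconvergent subspaces is automatically a topological duality; concretely one checks that the induced map $D_\rig\boxtimes\RP^1\to(\check D_\rig\boxtimes\RP^1)^*_b$ is a continuous bijection between spaces of compact type and invokes the open mapping theorem. The same argument applies verbatim to $\Delta=\SR(\eta)$, $\check\Delta=\SR(\check\eta)$ using Proposition \ref{prop:rank1} (freeness of rank one over $\SE^\dagger(\Gamma),\SR^+(\Gamma)$) to identify the overconvergent models and their density. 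I expect the main obstacle to be the topological bookkeeping in the last step — namely verifying that the spaces $\Delta\boxtimes_*\RP^1$ are genuinely of compact type (or at least reflexive Fréchet–Stein / LF) so that separately continuous nondegeneracy forces topological perfectness, rather than merely algebraic nondegeneracy; this is where I would lean most heavily on Colmez's functional-analytic input from \cite[\S V]{Colmez-Langlands} and the structural results on $\SR$-modules, citing \cite[Th\'eor\`eme V.1.12]{Colmez-Langlands} and Proposition \ref{prop:rank1} for the comparison with the overconvergent picture.
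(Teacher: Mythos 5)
Your argument for $\GL_2(\BQ_p)$-equivariance matches the paper's: both reduce to the overconvergent pairing of Proposition \ref{prop:pairing} and then propagate by density of $\SE^\dagger(\eta)\boxtimes_\delta\RP^1$ (resp.\ $D^\dagger\boxtimes\RP^1$) together with continuity.

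Your treatment of perfectness, however, diverges from the paper and contains a genuine gap. The nondegeneracy step reads: if $z\in D_\rig\boxtimes\RP^1$ pairs to zero with all of $\check D_\rig\boxtimes\RP^1$, then it pairs to zero with the dense subspace $\check D^\dagger\boxtimes\RP^1$, ``whence, using \dots perfectness there in the appropriate sense, it vanishes.'' But Proposition \ref{prop:pairing} asserts perfectness of the pairing on $\check D^\dagger\boxtimes\RP^1\times D^\dagger\boxtimes\RP^1$; it says nothing about the pairing of $\check D^\dagger\boxtimes\RP^1$ against an element $z$ of the strictly larger space $D_\rig\boxtimes\RP^1$. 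Density of $\check D^\dagger\boxtimes\RP^1$ in $\check D_\rig\boxtimes\RP^1$ is also of no help here: combined with separate continuity it only returns the hypothesis you started from. What you actually need is the nondegeneracy of the $\SE^\dagger$--against--$\SR$ residue pairing, a fact not supplied by Proposition \ref{prop:pairing}. Likewise, the proposed upgrade to topological perfectness via reflexivity and the open mapping theorem is heavyweight and, as you yourself flag, not fully verified.

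The paper's proof of perfectness is a one-liner and sidesteps all of this. The key observation is that $(z_1,z_2)\mapsto(z_1,\Res_{p\BZ_p}(z_2))$ gives a topological isomorphism $\Delta\boxtimes_*\RP^1\cong\Delta\oplus(\Delta\boxtimes p\BZ_p)$ (since $\Res_{\BZ_p^\times}(z_2)$ is recovered from $z_1$ via $w_*$, while $\Res_{p\BZ_p}(z_2)$ is free), and under this identification formula (\ref{eq:pairing}) is exactly the orthogonal direct sum of the pairing on $\check\Delta\times\Delta$ and the pairing on $\check\Delta\boxtimes p\BZ_p\times\Delta\boxtimes p\BZ_p$. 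Perfectness of $\{\cdot,\cdot\}_{\RP^1}$ therefore follows immediately from perfectness of these two constituent pairings, with no density or compact-type argument required. I'd recommend replacing your perfectness argument with this decomposition.
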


\begin{proof}
The restriction of $\{\cdot,\cdot\}_{\RP^1}$ on $\SE^\dagger(\check{\eta})\boxtimes_{\delta^{-1}}\RP^1\times
\SE^\dagger(\eta)\boxtimes_{\delta}\RP^1$ or $\check{D}^\dagger\boxtimes\RP^1\times D^\dagger\boxtimes\RP^1$ is $\GL_2(\BQ_p)$-equivariant by Proposition \ref{prop:pairing}. Hence $\{\cdot,\cdot\}_{\RP^1}$ itself is $\GL_2(\BQ_p)$-equivariant by the density of $\SE^\dagger(\eta)\boxtimes_{\delta}\RP^1$ or $D^\dagger\boxtimes\RP^1$. The perfectness of $\{\cdot,\cdot\}_{\RP^1}$ follows from the perfectness of $\{\cdot,\cdot\}$ on $\check{\Delta}\times \Delta$ and $\check{\Delta}\boxtimes p\BZ_p\times\Delta\boxtimes p\BZ_p$.
\end{proof}

If $D$ is \'etale, it follows by \cite[Corollaire II.7.2]{Treillis} that $D^\natural\subset D^\dagger$; hence we may view $D^\natural\boxtimes\RP^1$ as a submodule of $D_\rig\boxtimes\RP^1$. Colmez shows that the inclusion  $\Pi(\check{D})^*=D^\natural\boxtimes\RP^1\subset D_\rig\boxtimes \RP^1 $ extends naturally to a $\GL_2(\BQ_p)$-equivariant embedding $(\Pi(\Check{D})_\an)^*\hookrightarrow D_\rig\boxtimes \RP^1 $, and he further shows that the image of this embedding, which is denoted by $D^\natural_\rig\boxtimes \RP^1$, is the orthogonal complement of $\check{D}^\natural\boxtimes\RP^1$ under the pairing $\{\cdot,\cdot\}_{\RP^1}:\check{D}_\rig\boxtimes\RP^1\times D_\rig\boxtimes \RP^1\ra L$ (\cite[Remarque V.2.21(ii)]{Colmez-Langlands}). The following proposition is the key ingredient for our determination of locally analytic vectors of unitary principal series.

\begin{prop}\label{prop:Drig-orthogonal}$($\cite[Remarque V.2.21(i)]{Colmez-Langlands}$)$
$D^\natural_\rig\boxtimes \RP^1 $ and $\check{D}^\natural_\rig\boxtimes \RP^1$ are orthogonal complements of each other under the pairing $\{\cdot ,\cdot\}_{\RP^1}:\check{D}_\rig\boxtimes\RP^1\times D_\rig\boxtimes \RP^1\ra L$.
\end{prop}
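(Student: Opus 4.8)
The plan is to deduce this symmetric statement from the asymmetric one recalled just above. By \cite[Remarque V.2.21(ii)]{Colmez-Langlands} (whose proof carries over verbatim to twists of \'etale $(\varphi,\Gamma)$-modules) one has $D^\natural_\rig\boxtimes\RP^1=(\check{D}^\natural\boxtimes\RP^1)^\perp$ inside $D_\rig\boxtimes\RP^1$, and, replacing $D$ by $\check{D}$, also $\check{D}^\natural_\rig\boxtimes\RP^1=(D^\natural\boxtimes\RP^1)^\perp$ inside $\check{D}_\rig\boxtimes\RP^1$, orthogonal complements being taken for the perfect pairing $\{\cdot,\cdot\}_{\RP^1}$ of Proposition \ref{prop:pairing,rank 1}. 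In particular $D^\natural_\rig\boxtimes\RP^1$ and $\check{D}^\natural_\rig\boxtimes\RP^1$ are closed, each being an intersection of kernels of continuous functionals of the form $\{x,\cdot\}_{\RP^1}$.

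I would first prove $(D^\natural_\rig\boxtimes\RP^1)^\perp=\check{D}^\natural_\rig\boxtimes\RP^1$ by two inclusions. The inclusion $\subseteq$ is immediate: since $D^\natural\boxtimes\RP^1\subseteq D^\natural_\rig\boxtimes\RP^1$ we get $(D^\natural_\rig\boxtimes\RP^1)^\perp\subseteq(D^\natural\boxtimes\RP^1)^\perp=\check{D}^\natural_\rig\boxtimes\RP^1$. For the reverse inclusion the essential point is a density statement: $\check{D}^\natural\boxtimes\RP^1=\Pi(D)^*$ is dense in $\check{D}^\natural_\rig\boxtimes\RP^1=(\Pi(D)_\an)^*$. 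Granting this, fix $y\in D^\natural_\rig\boxtimes\RP^1=(\check{D}^\natural\boxtimes\RP^1)^\perp$; then the functional $\{\cdot,y\}_{\RP^1}$ is continuous on $\check{D}_\rig\boxtimes\RP^1$ and vanishes on $\check{D}^\natural\boxtimes\RP^1$, hence on its closure $\check{D}^\natural_\rig\boxtimes\RP^1$, which gives $\check{D}^\natural_\rig\boxtimes\RP^1\subseteq(D^\natural_\rig\boxtimes\RP^1)^\perp$. To establish the density I would note that $\Pi(D)_\an$ is dense in the admissible Banach representation $\Pi(D)$, so the transpose map $\Pi(D)^*\to(\Pi(D)_\an)^*$ is injective with weak-$*$ dense image, the annihilator of the image in $\Pi(D)_\an$ being $\ker(\Pi(D)_\an\hookrightarrow\Pi(D))=0$; since $\Pi(D)_\an$ is of compact type, hence reflexive, weak-$*$ density coincides with density, and density persists for the (a priori coarser) subspace topology induced from $\check{D}_\rig\boxtimes\RP^1$.

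Finally, applying $(\cdot)^\perp$ to the identity just proved and using the bipolar relation $(A^\perp)^\perp=\overline{A}$ valid for the perfect pairing $\{\cdot,\cdot\}_{\RP^1}$, one obtains $(\check{D}^\natural_\rig\boxtimes\RP^1)^\perp=\overline{D^\natural_\rig\boxtimes\RP^1}=D^\natural_\rig\boxtimes\RP^1$, the last equality because $D^\natural_\rig\boxtimes\RP^1$ is closed; alternatively one simply repeats the argument of the preceding paragraph with $D$ and $\check{D}$ interchanged. The only non-formal ingredient in all of this is the density of $\Pi(D)^*$ in $(\Pi(D)_\an)^*$, and that is where I expect the real content to lie; it is, however, a standard consequence of the density of locally analytic vectors in an admissible Banach representation together with the compact-type property of the space of such vectors.
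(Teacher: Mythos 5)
The paper does not actually prove this proposition; it cites \cite[Remarque V.2.21(i)]{Colmez-Langlands} directly, after having stated Remarque V.2.21(ii) in prose just above. Your proposal is a genuine deduction of (i) from (ii), and it is correct. The two containments for $(D^\natural_\rig\boxtimes\RP^1)^\perp=\check{D}^\natural_\rig\boxtimes\RP^1$ are right: the easy direction follows from $D^\natural\boxtimes\RP^1\subseteq D^\natural_\rig\boxtimes\RP^1$ and (ii) applied to $\check{D}$, and the other direction from continuity of $\{\cdot,y\}_{\RP^1}$ plus density of $\check{D}^\natural\boxtimes\RP^1$ in $\check{D}^\natural_\rig\boxtimes\RP^1$. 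Your alternative of running the same argument with $D$ and $\check D$ interchanged (rather than invoking bipolars) is cleaner, since it avoids having to discuss exactly what ``perfect'' means in Proposition \ref{prop:pairing,rank 1} for non-Banach $\SR$-modules.

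The real content, as you say, is the density of $\Pi(D)^*$ in $(\Pi(D)_\an)^*$, and your justification is correct: the annihilator in $\Pi(D)_\an$ of the image of $\Pi(D)^*$ vanishes because $\Pi(D)_\an\hookrightarrow\Pi(D)$ is injective and $\Pi(D)^*$ separates points of $\Pi(D)$, so the image is weak-$*$ dense; then compact type of $\Pi(D)_\an$ gives reflexivity, so weak-$*$ closure equals strong closure for subspaces of $(\Pi(D)_\an)^*$, whence strong density; and this persists in the (a priori coarser) topology induced from $\check{D}_\rig\boxtimes\RP^1$. It is worth noting that the paper itself invokes precisely this density, without justification, at the start of the proof of Proposition \ref{prop-comm-diag} (``$U_\an$ is dense in $U$; hence $U^*$ is dense in $U_\an^*$''), so your argument also fills in a gap elsewhere in the paper. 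One small slip: you write that the proof of V.2.21(ii) ``carries over verbatim to twists'' of \'etale $(\varphi,\Gamma)$-modules, but $D^\natural_\rig\boxtimes\RP^1$ is only defined in \S3.3 when $D$ itself is \'etale, and the proposition is stated in that generality; the remark about twists is unnecessary and slightly misleading, though it does not affect the logic.
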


\section{Unitary principal series and 2-dimensional trianguline representations}

\subsection{2-dimensional irreducible trianguline representations}

A $(\varphi,\Gamma)$-module over $\SR$ is called {\it triangulable}
if it is a successive extensions of rank
1 $(\varphi,\Gamma)$-modules over $\SR$; an $L$-linear representation $V$ of $G_{\BQ_p}$ is called
{\it trianguline} if $\RD_\rig(V)$ is triangulable.

\begin{prop} $($\cite[Proposition 3.1]{Colmez08}$)$\label{prop:one-rank-class}
If $D$ is a rank $1$ $(\varphi,\Gamma)$-module over $\SR$, then there exists a unique $\delta\in
\widehat{\mathscr{T}}(L)$ such that $D$ is isomorphic to $\SR(\delta)$.
\end{prop}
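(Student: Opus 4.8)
The plan is to trivialize $D$ by an $\SR$-basis, reduce the problem to normalizing the matrices of $\varphi$ and $\Gamma$ by a change of basis, and carry this out by analyzing the unit group $\SR^\times$ together with the operators $\varphi$ and $\psi$ on $\SR$. Concretely, pick a basis $e$ of $D$ and write $\varphi(e)=a\,e$ with $a\in\SR^\times$, and $\sigma(e)=b_\sigma\,e$ for $\sigma\in\Gamma$, where $\sigma\mapsto b_\sigma\in\SR^\times$ is a continuous $1$-cocycle; commuting $\varphi$ with $\Gamma$ gives $a\,\varphi(b_\sigma)=b_\sigma\,\sigma(a)$, i.e.\ $\varphi(b_\sigma)/b_\sigma=\sigma(a)/a$. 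Replacing $e$ by $u\,e$ with $u\in\SR^\times$ replaces $a$ by $(\varphi(u)/u)\,a$. The goal is to choose $u$ so that $a$ becomes a scalar $\lambda\in L^\times$: granting this, the cocycle relation with $a=\lambda$ forces $\varphi(b_\sigma)=b_\sigma$, so $b_\sigma\in\SR^{\varphi=1}\cap\SR^\times=L^\times$, whence $\sigma\mapsto b_\sigma$ is a continuous homomorphism $\Gamma\to L^\times$ of the form $\sigma_c\mapsto\eta(c)$ for a continuous character $\eta\colon\BZ_p^\times\to L^\times$, and $e$ then satisfies exactly the defining relations of $\SR(\delta)$ for the $\delta\in\widehat{\mathscr{T}}(L)$ with $\delta(p)=\lambda$ and $\delta|_{\BZ_p^\times}=\eta$.

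It remains to scalarize $a$. First one shows, using that each $\sigma\in\Gamma$ acts on $\SR$ by an isometry for every valuation $v^{\{s\}}$ while $\varphi$ satisfies $v^{\{s\}}(\varphi(f))=v^{\{ps\}}(f)$ for $s$ small, that $\sigma(a)/a$ has vanishing ``slope at the inner boundary of the annuli'' and that its class in $\SR^\times/\{\varphi(u)/u:u\in\SR^\times\}$ is a power of $\chi(\sigma)$ whose exponent is the slope of $a$; since a compatible $b_\sigma$ exists for all $\sigma$ and $\chi(\Gamma)=\BZ_p^\times$, this forces the slope of $a$ itself to vanish, so that $a$ has a well-defined nonzero ``constant term'' $a_0\in L^\times$ and $a/a_0$ lies in the trivial class of $\SR^\times/\{\varphi(u)/u\}$. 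The crux is then that $a/a_0$ is in fact of the form $\varphi(u)/u$ for some $u\in\SR^\times$, which one proves by successive approximation, each step reducing to solving an additive equation $(\varphi-1)(v)=(\text{explicit small element})$ in $\SR$ via the operator $\psi$ and the decomposition $\SR=\Res_{\BZ_p^\times}(\SR)\oplus\varphi(\SR)$. Base-changing by the resulting $u$ makes $a=a_0=\lambda\in L^\times$, completing the existence part; alternatively, one may first twist by an unramified character to reach the slope-zero (étale) case, descend to $\SE^\dagger$, and invoke the equivalence $\RD^\dagger$ together with the classification of rank-one $G_{\BQ_p}$-representations.

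For uniqueness, an isomorphism $\SR(\delta)\cong\SR(\delta')$ gives $\SR(\delta\delta'^{-1})\cong\SR$, hence a single $u\in\SR^\times$ with $\varphi(u)/u=\delta\delta'^{-1}(p)$ and $\sigma(u)/u=\delta\delta'^{-1}(\chi(\sigma))$ for all $\sigma$; the second relation makes $u$ a simultaneous $\Gamma$-eigenvector in $\SR^\times$, and passing to $\SR^{\Gamma_h=1}$ for $\Gamma_h$ pro-$p$ (or inspecting the Newton polygon of $u$) forces $u\in L^\times$, so $\delta=\delta'$. The main obstacle throughout is the solvability of $a/a_0=\varphi(u)/u$, i.e.\ the near-vanishing of the cokernel of $\varphi-1$ on $\SR$ and the precise structure of $\SR^\times$; this is exactly where the analytic structure of the Robba ring (convergence on annuli, the $\psi$-decomposition, a Lazard-type factorization of units) is indispensable. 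The slope bookkeeping, the continuity of $\eta$, and compatibility with the Fréchet and inductive-limit topologies are routine given the structure theory of $\SR$.
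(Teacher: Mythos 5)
The paper offers no proof of this proposition; it simply cites \cite[Prop.\ 3.1]{Colmez08}, so there is no ``paper proof'' to compare against. Of your two routes, the parenthetical one (twist by an unramified character to reach slope zero using Kedlaya, descend to $\SE^\dagger$, then apply $\RD^\dagger$ and the classification of characters of $G_{\BQ_p}$ via local class field theory) is a complete and standard argument, essentially the one Colmez gives, and you are right that the slope of a rank-$1$ $\varphi$-module over $\SR$ is an integer so the unramified twist exists. Your reduction at the end --- once $a=\lambda\in L^\times$, the cocycle relation forces $b_\sigma\in\SR^{\varphi=1}=L$, and continuity produces $\delta$ --- is also correct.

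The ``main'' route has a genuine gap at what you yourself call the crux. It is \emph{not} true that every $f\in\SR^\times$ with Newton slope $0$ near the inner boundary and leading coefficient $1$ is of the form $\varphi(u)/u$; equivalently, $\varphi-1$ is far from surjective on the relevant subspace of $\SR$. Concretely, $f=1+p/T$ is a unit of slope $0$ with leading coefficient $1$, but $p/T$ is not in the image of $\varphi-1$: one computes $\psi(1/T)=1/T$, so applying $\psi$ to $(\varphi-1)g=p/T$ gives $(1-\psi)g=p/T$, and the successive-approximation scheme you invoke ($g = p/T + \psi(g) = 2p/T + \psi^2(g)=\cdots$) never converges. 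Thus the decomposition $\SR=\SR^{\psi=0}\oplus\varphi(\SR)$ together with $\psi$ does \emph{not} by itself solve $(\varphi-1)v = (\text{small element})$. The resolution is that the $\Gamma$-action must be used again at this step, not merely to pin down the slope: the class of $a$ in $\SR^\times/\{\varphi(u)/u\}$ is constrained by the existence of the full family $(b_\sigma)_\sigma$, and only those classes killed by $\Gamma$ are trivial. So as written the direct route does not close; to make it rigorous you essentially have to fall back on the étale-descent argument, or on a genuine structure theorem for $\SR^\times$ together with a $\Gamma$-twisted version of the $\varphi-1$ solvability. The high-level plan (scalarize $a$, read $\delta(p)$ off $\lambda$ and $\delta|_{\BZ_p^\times}$ off $b_\sigma$) and the uniqueness argument are fine.
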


It follows that if $V$ is a $2$-dimensional irreducible trianguline representation, then $\RD_{\rig}(V)$ sits in a short exact sequence
\begin{equation}\label{eq:exact-sq-tr}
0\longrightarrow\SR(\delta_1)\longrightarrow\RD_\rig(V)\longrightarrow\SR(\delta_2)\longrightarrow0
\end{equation}
for some $\delta_1,\delta_2\in\widehat{\mathscr{T}}(L)$. Furthermore, we have that $(\ref{eq:exact-sq-tr})$ is non-split by Kedlaya's slope theory. Therefore $V$ is uniquely determined by the triple $(\delta_1,\delta_2,c)$ where $$c\in\mathrm{Proj}(\mathrm{Ext}^1(\SR(\delta_2),\SR(\delta_1)))=\mathrm{Proj}(H^1(\SR(\delta_1\delta_2^{-1})))$$ is the element representing the extension $(\ref{eq:exact-sq-tr})$. Let
\[
\SS=\{(\delta_1,\delta_2,c)|\delta_i\in\widehat{\mathscr{T}}(L), c\in\mathrm{Proj}(H^1(\SR(\delta_1\delta_2^{-1})))\}
\]
be the set of all such triples; then each element of $\SS$ naturally defines a rank 2 triangulable $(\varphi,\Gamma)$-module: the non-split extension of $\SR(\delta_2)$ by $\SR(\delta_1)$ defined by $c$. In the rest of this section we assume $p>2$.
The following calculation is due to Colmez.
\begin{prop}$($\cite[Th\'eor\`{e}me 2.9]{Colmez08}$)$
\begin{enumerate}
\item[(i)]If $\delta=x^{-k}$ or $x^{k+1}|x|$ for $k\in\mathbb{N}$, then $\dim_LH^1(\SR(\delta))=2$.
\item[(ii)]Otherwise, $\dim_L(\SR(\delta))=1$.
\end{enumerate}
\end{prop}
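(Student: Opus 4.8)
Set $D=\SR(\delta)$. The plan is to compute all three $(\varphi,\Gamma)$-cohomology groups $H^i(D)$, $i=0,1,2$, and to extract $\dim_L H^1(D)$ from the other two via the Euler characteristic formula. Recall that the $H^i(D)$ are the cohomology of the Herr complex of $D$, which for $p>2$ (so that $\Gamma=\Delta\times\Gamma_1$ with $\Delta$ finite of order prime to $p$ and $\Gamma_1\cong\BZ_p$) is the three-term Koszul complex on $\varphi-1$ and $\gamma-1$, $\gamma$ a topological generator of $\Gamma_1$, understood after taking $\Delta$-invariants (harmless in characteristic $0$). In particular $H^0(D)=D^{\varphi=1,\Gamma=1}$, and the general finiteness and local Euler characteristic theory of $(\varphi,\Gamma)$-modules over the Robba ring give, for the rank-$1$ module $D$,
\[
\dim_L H^0(D)-\dim_L H^1(D)+\dim_L H^2(D)=-1 .
\]
Thus it is enough to determine $H^0(D)$ and $H^2(D)$.

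First I would compute $H^0(D)$. Fix a standard basis $e$ of $D$ and write a general element as $fe$ with $f\in\SR$; then $fe$ is $\varphi$- and $\Gamma$-invariant iff $\varphi(f)=\delta(p)^{-1}f$ and $\sigma_a(f)=\delta(a)^{-1}f$ for all $a\in\BZ_p^\times$. With $t=\log(1+T)\in\SR^+$ one has $\varphi(t)=pt$, $\sigma_a(t)=at$, so $f=t^{k}$ solves these equations exactly when $\delta(p)=p^{-k}$ and $\delta(a)=a^{-k}$, i.e. when $\delta=x^{-k}$. The point is the converse: one shows that the $\Gamma$-equation $\sigma_a(f)=\delta(a)^{-1}f$ already forces $f$ to be holomorphic at $T=0$ with a zero there of some order $k\in\BN$ and $\delta|_{\BZ_p^\times}=x^{-k}|_{\BZ_p^\times}$, after which the $\varphi$-equation forces $\delta(p)=p^{-k}$ and $f/t^{k}\in\SR^{\varphi=1,\Gamma=1}=L$. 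Hence $H^0(D)$ is one-dimensional when $\delta=x^{-k}$ for some $k\in\BN$, and is $0$ otherwise. This structural statement about $\varphi$- and $\Gamma$-eigenvectors in $\SR$ — equivalently, a direct inspection of the Herr complex as in \cite{Colmez08} — is the technical heart of the proof.

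Next, $H^2(D)$. By the excerpt, $\SR\frac{dT}{1+T}\cong\SR(\epsilon)$, so the Tate dual of $D$ is $\check D=\Hom_\SR(\SR(\delta),\SR(\epsilon))\cong\SR(\epsilon\delta^{-1})$. Poincar\'e duality for $(\varphi,\Gamma)$-modules over the Robba ring then identifies $H^2(D)$ with the $L$-dual of $H^0(\check D)=H^0(\SR(\epsilon\delta^{-1}))$, which by the previous step is nonzero — and then one-dimensional — precisely when $\epsilon\delta^{-1}=x^{-k}$ for some $k\in\BN$, i.e. when $\delta=x^{k+1}|x|$.

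Finally I would assemble the three inputs. The two exceptional families $\{x^{-k}:k\in\BN\}$ and $\{x^{k+1}|x|:k\in\BN\}$ are disjoint: if $x^{-k}=x^{k'+1}|x|$, restricting to $\BZ_p^\times$ gives $a^{-k}=a^{k'+1}$ for all $a\in\BZ_p^\times$, whence $k+k'+1=0$ (since $\BZ_p^\times$ has elements of infinite order), which is impossible for $k,k'\in\BN$. So for every $\delta$ at most one of $H^0(D)$, $H^2(D)$ is nonzero, and the Euler characteristic identity gives $\dim_L H^1(D)=1+\dim_L H^0(D)+\dim_L H^2(D)$; this equals $2$ when $\delta=x^{-k}$ or $\delta=x^{k+1}|x|$ for some $k\in\BN$, and $1$ in all other cases. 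The main obstacle is the $H^0$ computation just described; the duality and Euler-characteristic ingredients are formal, and $p>2$ enters only to make the Herr complex a genuine three-term Koszul complex (after the harmless passage to $\Delta$-invariants) and, trivially, in the disjointness check.
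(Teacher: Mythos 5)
The paper does not prove this proposition; it is quoted from \cite[Th\'eor\`eme~2.9]{Colmez08}, where $H^0$, $H^1$ and $H^2$ of $\SR(\delta)$ are all computed directly by hand, using the structure of $\SR^{\psi=0}$ as an $\SR(\Gamma)$-module and explicit manipulation of cocycles. Your route is genuinely different and more streamlined: compute only $H^0$, get $H^2$ from Tate duality, and deduce $H^1$ from the Euler characteristic. The trade-off is that you are offloading the bulk of the work onto the Euler characteristic formula $\dim H^0-\dim H^1+\dim H^2=-\operatorname{rank}(D)$ and the duality $H^2(D)\cong H^0(\check D)^*$ for $(\varphi,\Gamma)$-modules over $\SR$. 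These are theorems of the first author and are certainly not formal in the non-\'etale rank-one setting; since the rank-one case and the general Euler/duality package were developed essentially in parallel, one should be explicit that the statements being invoked (proved by slope-filtration d\'evissage and \'etale comparison, not by reduction to Colmez's rank-one count) do not secretly presuppose the conclusion. Granting them, your derivation -- including $\check D\cong\SR(\epsilon\delta^{-1})$, the disjointness of the families $\{x^{-k}\}$ and $\{x^{k+1}|x|\}$, and the bookkeeping $\dim H^1=1+\dim H^0+\dim H^2$ -- is correct.

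There is, however, a real imprecision in the one step you yourself isolate as the technical heart. You assert that the $\Gamma$-eigenvector equation $\sigma_a(f)=\delta(a)^{-1}f$ ``forces $f$ to be holomorphic at $T=0$ with a zero there of some order $k$.'' But a general element of $\SR$ is a Laurent series convergent only on some annulus $0<v_p(T)\le r$ and has no intrinsic value, let alone vanishing order, at $T=0$; the would-be eigenvector $1/t$ is excluded from $\SR$ for a completely different reason (its poles at the $\zeta_{p^n}-1$), not because it ``fails to be holomorphic at $0$.'' So this sentence is a placeholder, not an argument. Making it precise requires either Colmez's approach through $\psi$ and the freeness of $\SR^{\psi=0}$ over $\SR(\Gamma)$, or a Sen-weight analysis showing that a nonzero element of $\SR(\delta)^{\Gamma}$ forces $w(\delta)\in-\BN$ and then pinning down $\delta(p)$ via $\varphi$. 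Since everything else in your argument reduces to this point, the proof is incomplete until that lemma is supplied, but the overall architecture is sound and gives a cleaner derivation of part (i) and part (ii) than a from-scratch computation of $H^1$.
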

\noindent Colmez further specifies an explicit basis of $H^{1}(\SR(\delta))$ in case $(\mathrm{i})$, and identifies $\mathrm{Proj}(H^1(\SR(\delta)))$ with $\RP^1(L)$ via this basis.
By this identification, we may write any $s\in\SS$ as $s=(\delta_1, \delta_2, \SL)$
where $\SL\in\RP^1(L)$ if $\delta_1\delta_2^{-1}=x^{-k}$ or $x^{k+1}|x|$ for some $k\in\mathbb{N}$, or $\SL=\infty$ otherwise. Let $D(s)$ denote the rank $2$ triangulable $(\varphi,\Gamma)$-module defined by $s$.  We set
$\check{s}\in\SS$ to be the triple $(\check{\delta_1},\check{\delta_2},\SL)=(\epsilon \delta_2^{-1},\epsilon\delta_1^{-1},\SL)$. Then
$\check{D}(s)$ is isomorphic to $D(\check{s})$ under Colmez's identification.

Let $\SS_*$ be the set of all $s=(\delta_1, \delta_2, \SL)\in \SS$
such that
$$v_p(\delta_1(p))+v_p(\delta_2(p))=0, \hskip 10pt
v_p(\delta_1(p))> 0.\hskip 10pt $$
For $s\in\SS_*$, set
$u(s)=v_p(\delta_1(p)) =-v_p(\delta_2(p)) ,
w(s)=w(\delta_1)-w(\delta_2), \delta_s=\delta_1\delta_2^{-1}(x|x|)^{-1}$, and we define
\begin{eqnarray*} \SS_*^\ng &=& \{ s\in\SS_* \ |\ w(s) \text{ is not
a positive integer } \},
\\
\SS_*^\cris &=& \{ s\in \SS_* \ |\ w(s) \text{ is a positive integer }
, u(s)<w(s), \SL=\infty \},
\\
\SS_*^\st &=& \{ s\in \SS_* \ |\ w(s) \text{ is a positive integer },
u(s)<w(s), \SL\neq \infty \},
\end{eqnarray*}
and
$\SS_\irr=\SS_*^\ng\coprod \SS_*^\cris\coprod \SS_*^\st$. Note that if $s\in \SS_*^\st$, we must have $\delta_s=x^{k-1}$ for some $k\in\BZ_+$.
\begin{prop}$($\cite[Proposition 3.5,3.7]{Colmez08}$)$\label{prop:class-rank2}
If $s\in\SS_{\irr}$, then $D(s)$ is \'etale, and the corresponding $L$-linear representation, which we denote by $V(s)$, is irreducible. Conversely, every $2$-dimensional irreducible trianguline representation is isomorphic to $V(s)$ for some $s\in\SS_{\irr}$. Moreover, for $s=(\delta_1,\delta_2, \SL), s'=(\delta'_1,\delta'_2,\SL')\in\SS_\irr$, if $\delta_1=\delta'_1$, then $V(s)\cong V(s')$ if and only if
$\delta_2=\delta'_2$ and $\SL=\SL'$; if $\delta_1\neq \delta'_1$, then $V(s)\cong V(s')$ if and only
if $s,s'\in \SS_*^\cris$ and $\delta'_1=x^{w(s)}\delta_2, \
\delta'_2=x^{-w(s)}\delta_1$.
\end{prop}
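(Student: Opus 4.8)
The plan is to run the whole statement through Kedlaya's slope filtration theorem \cite{Kedlaya08}, together with the classification of rank $1$ $(\varphi,\Gamma)$-modules over $\SR$ (Proposition \ref{prop:one-rank-class}) and Colmez's cohomology computation (the dimension count for $H^1(\SR(\delta))$ recalled above). Two facts get used constantly: a $(\varphi,\Gamma)$-module over $\SR$ is \'etale iff it is pure of slope $0$, with $\deg\SR(\delta)=v_p(\delta(p))$; and the slope filtration has \emph{increasing} slopes, so in the non-split extension $0\to\SR(\delta_1)\to D(s)\to\SR(\delta_2)\to0$ defining $D(s)$, the ``obvious'' submodule $\SR(\delta_1)$, of slope $u(s)=v_p(\delta_1(p))>0$, has \emph{larger} slope than the quotient $\SR(\delta_2)$, of slope $-u(s)<0$, and therefore can never be the bottom step of the slope filtration of $D(s)$. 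Since $\deg D(s)=v_p(\delta_1(p))+v_p(\delta_2(p))=0$ for $s\in\SS_*$, it follows that $D(s)$ is \'etale unless it contains a saturated rank $1$ submodule of \emph{negative} slope; any such submodule, being $\cong\SR(\delta')$ with $v_p(\delta'(p))<0$, cannot be contained in $\SR(\delta_1)$ (slope $u(s)>0$), so it maps injectively into $\SR(\delta_2)$ with image $t^j\SR(\delta_2)$ for some $j\geq1$ — equivalently, the pullback of the class of $D(s)$ along $t^j\SR(\delta_2)\hookrightarrow\SR(\delta_2)$ splits. A computation with the long exact cohomology sequences, using Colmez's explicit basis of $H^1(\SR(\delta))$ in the two-dimensional cases, shows that this phenomenon — and likewise the appearance of a \emph{slope-$0$} saturated submodule, which would make $V(s)$ reducible — can occur only when $w(s)$ is a positive integer with $u(s)\geq w(s)$, precisely the locus that $\SS_\irr=\SS_*^\ng\coprod\SS_*^\cris\coprod\SS_*^\st$ is designed to avoid. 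Hence for $s\in\SS_\irr$ the module $D(s)$ is \'etale and the corresponding representation $V(s)$ is irreducible.

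For the converse, if $V$ is any $2$-dimensional irreducible trianguline representation then $\RD_\rig(V)$ is a non-split extension $0\to\SR(\delta_1)\to\RD_\rig(V)\to\SR(\delta_2)\to0$ for some $\delta_1,\delta_2\in\widehat{\mathscr{T}}(L)$. \'Etaleness of $\RD_\rig(V)$ forces $v_p(\delta_1(p))+v_p(\delta_2(p))=\deg\RD_\rig(V)=0$ and $v_p(\delta_1(p))\geq0$ (a saturated submodule of an \'etale module has non-negative slope); and $v_p(\delta_1(p))=0$ is impossible, for then $\SR(\delta_1)$ would be an \'etale saturated submodule, hence would come from a subrepresentation, contradicting irreducibility. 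So $s:=(\delta_1,\delta_2,\SL)\in\SS_*$, and by the analysis of the first paragraph the \'etaleness of $D(s)\cong\RD_\rig(V)$ together with the irreducibility of $V$ forces $s\in\SS_\irr$; plainly $V\cong V(s)$.

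For the isomorphism classification, the key is to list the saturated rank $1$ submodules $\SR(\delta')\subset D(s)$ of strictly positive slope. Either $\SR(\delta')\subseteq\SR(\delta_1)$ — and then, since $t^j\SR(\delta_1)$ is not saturated in $D(s)$ for $j\geq1$, necessarily $\SR(\delta')=\SR(\delta_1)$ — or $\SR(\delta')$ maps injectively into $\SR(\delta_2)$ with image $t^m\SR(\delta_2)$, so that $\SR(\delta')\cong\SR(x^m\delta_2)$ with $m>u(s)$, the existence of such a sub being again governed by the vanishing of a transition map on $H^1$. One finds: outside $\SS_*^\cris$, $\SR(\delta_1)$ is the \emph{unique} saturated submodule of its (maximal) slope, whereas inside $\SS_*^\cris$ there is exactly one more, necessarily $\cong\SR(x^{w(s)}\delta_2)$ — the second refinement of the crystalline representation $V(s)$. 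Consequently an isomorphism $V(s)\cong V(s')$ either carries $\SR(\delta_1)$ onto $\SR(\delta_1')$, in which case it respects the filtrations and hence induces $\delta_1=\delta_1'$ (Proposition \ref{prop:one-rank-class}), $\delta_2=\delta_2'$, and — since the scalar automorphisms of the graded pieces act trivially on $\mathrm{Proj}(H^1)$ — $\SL=\SL'$; or it carries $\SR(\delta_1)$ onto the second refinement of $D(s')$, which by the formula above forces $s,s'\in\SS_*^\cris$ with $\delta_1'=x^{w(s)}\delta_2$, $\delta_2'=x^{-w(s)}\delta_1$. That no further coincidences arise uses that a crystalline representation has exactly two refinements.

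The step demanding genuine computation is the geometric case $w(s)\in\BZ_+$: there $D(s)$ is \'etale exactly when $u(s)<w(s)$, and when moreover $\dim_LH^1(\SR(\delta_1\delta_2^{-1}))=2$ one has a whole $\RP^1(L)$ of candidate extensions to sort out. One must single out the class $\SL=\infty$, whose extension becomes split after inverting $t$ (these give the crystalline representations, whose two triangulations are responsible for the extra identification in $\SS_*^\cris$), from the remaining classes (the semistable non-crystalline ``special'' representations of $\SS_*^\st$, each with a single triangulation), and one must compute explicitly the transition maps $H^1(\SR(\delta_1\delta_2^{-1}))\to H^1(\SR(x^{\pm j}\delta_1\delta_2^{-1}))$ on Colmez's basis in order to derive both the \'etaleness criterion and the uniqueness assertions used above. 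All of this is the substance of \cite[\S3]{Colmez08}, which our proof simply invokes.
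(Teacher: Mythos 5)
The paper states this proposition as a direct citation to \cite[Propositions 3.5, 3.7]{Colmez08} without reproducing a proof, and your sketch is a faithful reconstruction of Colmez's argument via slope theory, the classification of $(\varphi,\Gamma)$-stable rank-$1$ submodules $t^j\SR(\delta)$, and the transition maps on $H^1(\SR(\delta))$, deferring the explicit cohomology computations to the source exactly as the paper does. The only small inaccuracy is the parenthetical claim that $\SR(\delta_1)$ has maximal slope among the saturated rank-$1$ submodules of positive slope: in the crystalline case the second refinement $\SR(x^{w(s)}\delta_2)$, of slope $w(s)-u(s)$, can have larger or smaller slope than $u(s)$, though this does not affect the substance of your argument.
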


We call $s\in\SS_{\irr}$ \emph{exceptional} if $s\in\SS_*^{\mathrm{cris}}$ and $V(s)$ is not Frobenius semi-simple; this is equivalent to $\delta_s=x^{k-1}|x|^{-1}$ for some $k\in\mathbb{Z}_+$.

\subsection{Unitary principal series}
Throughout this subsection, let $s=(\delta_1,\delta_2,\SL)\in\SS_\irr$. Let $\SC^u(\RP^1(\delta))$ be the $L$-vector space of $\SC^u$ functions $f:\BQ_p\rightarrow L$ such that $\delta(x)f(1/x)|_{\BQ_p-\{0\}}$ extends to a $\SC^u$-function on $\BQ_p$. In other words,
$\SC^{u(s)}(\RP^1(\delta_s))$ is the $L$-vector space of functions
$f: \BQ_p\rightarrow L$ satisfying:

$\bullet$ $f|_{\BZ_p}$ is of class $\SC^{u(s)}$.

$\bullet$ $\delta_s(z)f(1/z)|_{\BZ_p-\{0\}}$ extends to
a $\SC^{u(s)}$-function on $\BZ_p$.

\noindent We thus have an isomorphism
$$ \SC^{u(s)}(\RP^1(\delta_s)) \simeq \SC^{u(s)}(\BZ_p,L)\oplus \SC^{u(s)}(\BZ_p,L),
\hskip 10pt f\mapsto (f_1, f_2)$$ where $f_1(z)=f(pz)$ and
$f_2$ is the extension of $\delta_s(z)f(1/z)$. By this isomorphism, we may equip $\SC^{u(s)}(\RP^1(\delta_s))$
with a Banach space structure by defining
$$ ||f|| = \max\Big(\ ||f_1||_{\SC^{u(s)}}, \ ||f_2||_{\SC^{u(s)}} \ \Big) . $$
We define a $\GL_2(\BQ_p)$-representation $B(s)$ on
$\SC^{u(s)}(\RP^1(\delta_s))$ by
$$\left(\wvec{a}{b}{c}{d} \star_s f \right)
(x)= \delta_2(ad-bc) \delta_s(-cx+a) f
\left(\frac{dx-b}{-cx+a}\right);$$ then $B(s)$ is a Banach space
representation. We define a subspace $M(s)$ of $B(s)$ as below:

$\bullet$ If $\delta_s\neq x^k$ for any $k\in\BN$, we
define $M(s)$ to be the subspace generated by $\{x^i|0\leq
i<u(s)\}$ and $\{(x-a)^{-i}\delta_s(x-a)|a\in \BQ_p, 0\leq
i<u(s)\}$.

$\bullet$ If $\delta_s=x^{k-1}$ for some $k\in\BZ_+$, let $M(s)'$ be the space of functions of the form
$$ f=\sum_{u\in U} \lambda_u (x-a_u)^{j_u}\log_\SL(x-a_u)$$
where $U$ is a finite set, $j_u$ are integers between $[\frac{k+1}{2}]$ and $k$, $\lambda_u\in L$
and $a_u\in \BQ_p$ such that $\deg(\sum_{u\in
U}\lambda_u(x-a_u)^{j_u})<u(s)$.
By  \cite[Lemme 3.3.2 ]{Breuil-com}, $M(s)'$ is a subspace of
$B(s)$. We define $M(s)$ to be the subspace generated by $M(s)'$ and
$x^i$ for $0\leq i\leq k-1$.

\noindent An easy computation shows that $M(s)$ is stable under $\GL_2(\BQ_p)$ in both cases. We set $\Pi(s)=B(s)/\widehat{M}(s)$ where $\widehat{M}(s)$ is the closure of $M(s)$ in $B(s)$.

Let $D^\natural(s)$ denote $(\D(V(s)))^\natural$ for simplicity. We fix a standard basis $e_2$ of $\SR(\delta_2)$.
For any $z\in D^\natural(s) \boxtimes \RP^1$, suppose the image of $\Res_{\BZ_p}(\wvec{p^n}{0}{0}{1}z)$ in $\SR(\delta_2)$ is
$z_2^{(n)}e_2$. The following theorem follows by \cite[Th\'eor\`{e}me IV.4.12]{Colmez-Langlands}.
\begin{thm} \label{thm:Amice-rank2}
For $s\in\SS_\irr$ non-exceptional and $z\in D^\natural(s) \boxtimes \RP^1$, there exists $\mu_z\in \SD_{u(s)}(\BQ_p)$ such that $$\mathscr{A}^{(n)}(\mu_z)=(\delta_2(p))^{-n}z_2^{(n)}.$$
Furthermore, the map
$z\mapsto \mu_z$
is a $\GL_2(\BQ_p)$-equivariant topological isomorphism
from $D^\natural(s) \boxtimes \RP^1$ to $\Pi(\check{s})^*$.
\end{thm}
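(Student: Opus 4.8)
The plan is to transport everything through Colmez's Amice-transform dictionary on $p\BZ_p$ and then check compatibility with the $\GL_2(\BQ_p)$-action generator by generator. First I would construct the distribution $\mu_z$. Given $z\in D^\natural(s)\boxtimes\RP^1$, the data $\bigl(\Res_{\BZ_p}(\wvec{p^n}{0}{0}{1}z)\bigr)_{n\in\BN}$ lives in $D^\natural(s)$, and pushing it forward along $D^\natural(s)\twoheadrightarrow\SR(\delta_2)$ (the projection from the defining exact sequence, which makes sense because $D^\natural(s)$ is a $\varphi,\Gamma$-stable $\SO_{\SE^+}$-lattice and $\SR^+(\delta_2)$ is its image) gives a sequence $z_2^{(n)}e_2$. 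The normalization $\mathscr{A}^{(n)}(\mu_z) := (\delta_2(p))^{-n}z_2^{(n)}$ is forced, and the point is that $\psi$-compatibility of the original sequence (which follows from item (4) of Proposition \ref{prop:GL2action}: $\Res_{p\BZ_p}(\wvec{p}{0}{0}{1}w) = \varphi(w_1)$ applied iteratively, together with $\psi\varphi=\id$) translates, after twisting by $(\delta_2(p))^{-n}$ and using $\varphi(e_2)=\delta_2(p)e_2$, into the relation $\psi(\mu_z^{(n+1)})=\mu_z^{(n)}$ required to glue to a distribution on $\BQ_p$. That $\mu_z$ is globally of order $u(s)$ should come from the estimate on $v^{\{s\}}$ of elements of $D^\natural(s)$ uniform in $n$, i.e. from the boundedness built into the treillis $D^\natural(s)$ — this is where I expect to lean hardest on Colmez's \cite[Th\'eor\`eme IV.4.12]{Colmez-Langlands}, since the order-$u(s)$ growth control is the heart of the matter.

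Next I would identify the target. By Theorem 3.?? (the corollary after \cite[Th\'eor\`eme II.3.1]{Colmez-Langlands}), $D^\natural(s)\boxtimes\RP^1 = \Pi(D(s))^*\otimes\delta_{D(s)}\cong\Pi(\check{D}(s))^* = \Pi(\check{s})^*$ as $\GL_2(\BQ_p)$-representations, using $\check{D}(s)\cong D(\check s)$ and the rank-$2$ duality identity $\Pi(D)^*\otimes\delta_D\cong\Pi(\check D)^*$ recalled just after the theorem. On the other side, $\Pi(\check s)$ is by definition $B(\check s)/\widehat{M}(\check s)$, so $\Pi(\check s)^*$ is the subspace of $B(\check s)^*$ of functionals killing $\widehat{M}(\check s)$; via the isomorphism $\SC^{u}(\RP^1(\delta))\simeq\SC^u(\BZ_p,L)^{\oplus 2}$ and the Amice transform $\SD_u(\BZ_p,L)\cong\SE^{+}$ (extended to $\SD_u(\BQ_p,L)$ via the $\psi$-compatible system), $B(\check s)^*$ is computed by pairs of distributions, and the condition of annihilating $M(\check s)$ cuts out exactly the distributions on $\BQ_p$ globally of order $u(s)$ (this is Colmez's description of $\Pi(\check s)^*$ — the moment conditions on $\mu$ coming from $x^i$, $i<u(s)$, and from $(x-a)^{-i}\delta_{\check s}(x-a)$, or from the $\log_\SL$ functions in the $\St$ case). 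So the map $z\mapsto\mu_z$ is a well-defined $L$-linear map $D^\natural(s)\boxtimes\RP^1\to\SD_{u(s)}(\BQ_p)$ landing in the subspace that is $\Pi(\check s)^*$.

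For $\GL_2(\BQ_p)$-equivariance I would check the action matches on the generators of $\GL_2(\BQ_p)$ listed in Proposition \ref{prop:GL2action}. The diagonal torus and $\wvec{a}{0}{0}{1}$ ($a\in\BZ_p^\times$) act by $\delta$-twists and $\sigma_a$ on $z_2$, which under the Amice transform correspond exactly to the formulas $\int f\varphi(\mu)=\int f(px)\mu$, $\int f\sigma_a(\mu)=\int f(ax)\mu$, i.e. to the $\star_s$-action of the same matrices on $\SC^{u(s)}(\RP^1(\delta_{\check s}))$ — so these are direct from the definitions (the character bookkeeping $\delta_2$ versus $\check\delta_1,\check\delta_2$ and the twist by $\epsilon$ has to be done carefully but is routine). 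The element $w$ is the crux: on $D^\natural(s)\boxtimes\RP^1$ it swaps $z_1\leftrightarrow z_2$, while on functions it is $f\mapsto\delta_{\check s}(-x)f(1/x)$; matching these requires Example \ref{example:calE+-stable}, namely the identity $\int_{\BZ_p^\times}f(x)\mathscr{A}^{-1}(w_\delta(z)) = \int_{\BZ_p^\times}\delta(x)f(1/x)\mathscr{A}^{-1}(z)$, combined with the defining relation $\Res_{\BZ_p^\times}(z_2)=w_{\delta_D}(\Res_{\BZ_p^\times}(z_1))$ for elements of $D\boxtimes\RP^1$ and its rank-$1$ shadow on $\SR(\delta_2)$. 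Once $w$ and the torus are handled, the unipotent $\wvec{1}{b}{0}{1}$ for $b\in\BZ_p$ follows because $\wvec{1}{b}{0}{1}$ preserves $\BZ_p$ and acts there by translation (matching $\int f\mu\mapsto\int f(x+b)\mu$), and $\wvec{1}{b}{0}{1}$ for $b\in p^{-n}\BZ_p$ is then obtained by conjugating with $\wvec{p^n}{0}{0}{1}$ and using the $\psi$-compatible system; since these together with $w$ generate $\GL_2(\BQ_p)$, equivariance on all of $\GL_2(\BQ_p)$ follows. Finally, topological isomorphism: the map is continuous by construction (each $z_2^{(n)}$ depends continuously on $z$), injective because $z$ is recovered from $(z_2^{(n)})_n$ together with the $\RP^1$-gluing relation (Colmez: $z_1$ on $\BZ_p^\times$ is $w_D$ of the $z_2^{(n)}$ data, and on $p\BZ_p$ it is $\varphi$ of the shifted data), and surjective onto $\Pi(\check s)^*$ by the dimension/density count in \cite[Th\'eor\`eme IV.4.12]{Colmez-Langlands}; then open mapping for Fr\'echet or Banach-dual spaces upgrades this to a topological isomorphism. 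I expect the global-order-$u(s)$ estimate in the construction of $\mu_z$, and the precise matching of the $w$-action, to be the two main obstacles; everything else is bookkeeping with the character twists.
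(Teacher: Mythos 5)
The paper offers no proof of this theorem: it says only that the statement ``follows by [Colmez, Th\'eor\`eme IV.4.12]'' and moves on, so there is no argument of the paper's to compare against. Your sketch is a reasonable reconstruction of how that theorem unpacks into the present notation — the Amice-transform dictionary, the $\psi$-compatibility of $(z_2^{(n)})_n$ coming from Proposition \ref{prop:GL2action}(iv) together with $\varphi(e_2)=\delta_2(p)e_2$, the equivariance check on generators using the $w_\delta$ formula of Example \ref{example:calE+-stable}, and the identification $D^\natural(s)\boxtimes\RP^1\cong\Pi(\check s)^*$ from the corollary to [Colmez, Th\'eor\`eme II.3.1]. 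You also correctly isolate the two genuinely hard points, namely the global order-$u(s)$ growth bound (which is precisely where non-exceptionality is used) and surjectivity onto $\Pi(\check s)^*$, and you defer both to the same Colmez reference the paper cites; so your route is in the same spirit as what the paper intends.

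One imprecision worth flagging: you write that the condition of annihilating $M(\check s)$ ``cuts out exactly the distributions on $\BQ_p$ globally of order $u(s)$''. That conflates two conditions. Membership in $B(\check s)^*$ (equivalently, up to the $\RP^1$-gluing, being globally of order $u(s)$) is already guaranteed once $\mu_z$ lands in $\SD_{u(s)}(\BQ_p)$; killing $\widehat M(\check s)$ is the additional, strictly stronger set of vanishing-moment (and, in the $\St$ case, $\log_\SL$) conditions that carve out the closed subspace $\Pi(\check s)^*\subset B(\check s)^*$. Your parenthetical shows you have the right picture in mind, but the sentence as written would let a reader think the two conditions coincide. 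Also, you assert in passing that $\SR^+(\delta_2)$ ``is the image'' of $D^\natural(s)$ under $j$; this is not automatic from the trellis structure alone — it is again part of the content of Colmez's theorem for non-exceptional $s$ — so it should be flagged as something being imported rather than stated as a definition-chasing fact.
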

We denote the converse of this isomorphism by $\mathscr{A}_s$.

\section{Locally analytic principal series and rank $1$ $(\varphi,\Gamma)$-modules}

\subsection{Locally analytic principal series}\label{ss:locally-principal}
 For any $\delta\in\widehat{\mathscr{T}}(L)$, we denote by $\LA(\RP^1(\delta))$ the $L$-vector space of locally analytic functions $f:\BQ_p\rightarrow L$ such that $\delta(x)f(1/x)|_{\BQ_p-\{0\}}$ extends to a
locally analytic function on $\BQ_p$. As in the case of $\SC^u(\RP^1(\delta))$, for any $f\in\LA(\RP^1(\delta))$, if we set $f_1(pz)=f|_{p\BZ_p}$ and $f_2$ to be the extension of $\delta_s(x)f(1/x)|_{\BZ_p-\{0\}}$, then the map $f\mapsto f_1\oplus f_2$ induces an isomorphism from $\LA(\RP^1(\delta))\cong\LA(\BZ_p,L)\oplus\LA(\BZ_p,L)$. We then equip $\LA(\RP^1(\delta))$ with the topology induced from $\LA(\BZ_p,L)\oplus\LA(\BZ_p,L)$.

For any pair $(\delta_1,\delta_2)\in\widehat{\mathscr{T}}(L)\times\widehat{\mathscr{T}}(L)$, let $\widetilde{\Sigma}(\delta_1,\delta_2)$ denote the locally analytic parabolic induction
\begin{equation*}
\begin{split}
\left(\Ind^{\GL_2(\BQ_p)}_{\mathrm{B}(\BQ_p)}\delta_2\otimes\delta_1\epsilon^{-1}\right)_\an&=\{\text{locally analytic functions}\ F:\GL_2(\BQ_p)\ra L\ \text{such} \\
&\text{that}\ F(bg)=(\delta_2\otimes\delta_1\epsilon^{-1})(b)F(g)\ \text{for all}\ b\in\mathrm{B}(\BQ_p) \},
\end{split}
\end{equation*}
which is equipped with the left $\GL_2(\BQ_p)$-action $(gF)(g')=F(g'g)$ for any $g,g'\in \GL_2(\BQ_p)$. Put $\delta=\delta_1\delta_2^{-1}\epsilon^{-1}$. We may identify the underlying topological space of $\widetilde{\Sigma}(\delta_1,\delta_2)$ with $\LA(\RP^1(\delta))$ by the map
$$F\mapsto f(x):=F(\wvec{0}{1}{-1}{x})$$
for any $F\in\widetilde{\Sigma}(\delta_1,\delta_2)$. In addition, the corresponding $\GL_2(\BQ_p)$-action on $\LA(\RP^1(\delta))$ is given by the formula
\begin{equation}
   \left(\wvec{a}{b}{c}{d} \cdot f \right)
(x)= \delta_2(ad-bc) \delta(-cx+a) f
\left(\frac{dx-b}{-cx+a}\right).
\end{equation}


If  $k=w(\delta_1\delta_2^{-1})=w(\delta)+1$ is a positive integer,  
then the $k$-th differential map
$$ I_k: \LA(\RP^1(\delta))\rightarrow \LA(\RP^1(x^{-2k}\delta)),
\hskip 10pt f(x) \mapsto \left(\frac{\mathrm{d}}{\mathrm{d}
x}\right)^{k}f(x),$$
induces an intertwining between $\widetilde{\Sigma}(\delta_1,\delta_2)$ and $\widetilde{\Sigma}(x^{-k}\delta_1,x^k\delta_2)$. The kernel of $I_k$, which consists of locally polynomial functions of degree $\leq k-1$, is isomorphic to
\begin{equation}\label{eq:lalg}
(\delta_2\circ\det)\otimes\Sym^{k-1}L^2 \otimes \Ind_{\RB(\BQ_p)}^{\GL_2(\BQ_p)}
(1\otimes (x^{-k+1}\delta) )_{\mathrm{sm}}
\end{equation}
as a locally analytic representation. Moreover, if $\delta=x^{k-1}$, the $L$-vector subspace generated by $\{x^i|0\leq i\leq k-1\}$ is $\GL_2(\BQ_p)$-invariant, and is isomorphic to $(\delta_2\circ\det)\otimes\Sym^{k-1}L^2$ as a $\GL_2(\BQ_p)$-representation. The quotient of $\ker I_k$ by this subspace is isomorphic to $(\delta_2\circ\det)\otimes\Sym^{k-1}L^2 \otimes\mathrm{St}$.

We define
\begin{equation}\label{eq:Sigma}
\Sigma(\delta_1, \delta_2)=\left\{
         \begin{array}{ll}
         \widetilde{\Sigma}(\delta_1, \delta_2)/(\delta_2\circ\det)\otimes\Sym^{k-1}L^2 & \text{if $\delta=x^{k-1}$ for some integer $k\geq1$}; \\
          \widetilde{\Sigma}(\delta_1, \delta_2) & \text{otherwise}.
         \end{array}
       \right.
\end{equation}
The following proposition, which follows by the main results of \cite{ST-an}, \cite{ST-an2}, determines the Jordan-H\"{o}lder series of $\Sigma(\delta_1,\delta_2)$.
\begin{prop}\label{prop:ST-an}
With notations as above, the following are true.
\begin{enumerate}
\item[(i)]If $w(\delta)\notin\mathbb{N}$, then $\Sigma(\delta_1,\delta_2)=\widetilde{\Sigma}(\delta_1,\delta_2)$ is a topological irreducible locally analytic representation of $\GL_2(\BQ_p)$.
\item[(ii)]If $w(\delta)\in\mathbb{N}$ and $\delta\neq x^{k-1}$, then $I_k$ is surjective, and $\Sigma(\delta_1,\delta_2)=\widetilde{\Sigma}(\delta_1,\delta_2)$ is a non-split extension of $\widetilde{\Sigma}(x^{-k}\delta_1,x^{k}\delta_2)$ by $(\delta_2\circ\det)\otimes\Sym^{k-1}L^2 \otimes \Ind_{\RB(\BQ_p)}^{\GL_2(\BQ_p)}
(1\otimes (x^{-k+1}\delta) )_{\mathrm{sm}}$, and both $\widetilde{\Sigma}(x^{-k}\delta_1,x^{k}\delta_2)$ and $(\delta_2\circ\det)\otimes\Sym^{k-1}L^2 \otimes \Ind_{\RB(\BQ_p)}^{\GL_2(\BQ_p)}
(1\otimes (x^{-k+1}\delta) )_{\mathrm{sm}}$ are topological irreducible.
\item[(iii)]If $\delta=x^{k-1}$ for some integer $k\geq1$, then $\Sigma(\delta_1,\delta_2)$ is a non-split extension of
$\widetilde{\Sigma}(x^{-k}\delta_1,x^{k}\delta_2)$ by $(\delta_2\circ\det)\otimes\Sym^{k-1}L^2 \otimes\mathrm{St}$, and both $\widetilde{\Sigma}(x^{-k}\delta_1,x^{k}\delta_2)$ and $(\delta_2\circ\det)\otimes\Sym^{k-1}L^2 \otimes\mathrm{St}$ are topological irreducible.
\end{enumerate}
\end{prop}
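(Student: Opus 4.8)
This is the $\GL_2(\BQ_p)$-case of Schneider and Teitelbaum's structure theory for locally analytic principal series, and the plan is to reduce everything to their two basic inputs: topological irreducibility of a locally analytic principal series of non-integral weight, and non-splitness of the associated sequence of Verma modules. First I would twist by $(\delta_2\circ\det)^{-1}$ — an auto-equivalence of the category of admissible locally analytic $\GL_2(\BQ_p)$-representations that fixes $w(\delta)$, $k$, $I_k$ and carries $\widetilde\Sigma$, $\Sigma$ to the corresponding objects — so as to assume $\delta_2=1$, i.e.\ $\widetilde\Sigma(\delta_1,1)=\left(\Ind_{\RB(\BQ_p)}^{\GL_2(\BQ_p)}1\otimes\delta\right)_\an$ with $\delta=\delta_1\epsilon^{-1}$. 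Throughout one works with admissible locally analytic representations, whose strong duals are coadmissible modules over $\SD(G,L)$, $G=\GL_2(\BQ_p)$; in particular every object has finite length and a topologically irreducible object has no proper closed subrepresentation.

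\textbf{Part (i).} When $w(\delta)\notin\BN$ we have $k=w(\delta_1\delta_2^{-1})=w(\delta)+1\notin\BZ_+$, so the weight is non-integral and there is no nonzero intertwiner into or out of any other principal series. By the irreducibility criterion of \cite{ST-an2}, resting on the $U(\mathfrak g)$-finite theory of \cite{ST-an} with $\mathfrak g=\Lie\GL_2(\BQ_p)$, $\widetilde\Sigma(\delta_1,\delta_2)=\Sigma(\delta_1,\delta_2)$ is topologically irreducible. This case is also the engine for the other two.

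\textbf{Parts (ii) and (iii).} Suppose now $k=w(\delta)+1\in\BZ_+$. I would first observe that the target of $I_k$ has weight parameter $x^{-2k}\delta$ with $w(x^{-2k}\delta)=w(\delta)-2k=-(k+1)<0$, so by part (i) $\widetilde\Sigma(x^{-k}\delta_1,x^k\delta_2)$ is topologically irreducible. Next I would check that $I_k=(\rmd/\rmd x)^k$ is surjective: under $\LA(\RP^1(\delta))\cong\LA(\BZ_p,L)^{\oplus2}$ it acts on the first summand as $p^k(\rmd/\rmd z)^k$, and term-by-term antidifferentiation preserves local analyticity, the second summand being handled the same way after the twist; alternatively this is part of \cite{ST-an2}. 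This yields a short exact sequence
\[
0\longrightarrow\ker I_k\longrightarrow\widetilde\Sigma(\delta_1,\delta_2)\longrightarrow\widetilde\Sigma(x^{-k}\delta_1,x^k\delta_2)\longrightarrow0,
\]
with the middle map $I_k$ and $\ker I_k\cong(\delta_2\circ\det)\otimes\Sym^{k-1}L^2\otimes\Ind_{\RB(\BQ_p)}^{\GL_2(\BQ_p)}(1\otimes x^{-k+1}\delta)_{\mathrm{sm}}$ by the discussion preceding the proposition. For $\delta\neq x^{k-1}$ the smooth principal series $\Ind_{\RB(\BQ_p)}^{\GL_2(\BQ_p)}(1\otimes x^{-k+1}\delta)_{\mathrm{sm}}$ is topologically irreducible by the classification of reducible smooth principal series of $\GL_2(\BQ_p)$, and tensoring an irreducible smooth representation with the finite-dimensional $(\delta_2\circ\det)\otimes\Sym^{k-1}L^2$ preserves topological irreducibility, so $\ker I_k$ is irreducible; this gives (ii). When $\delta=x^{k-1}$, $\Ind_{\RB(\BQ_p)}^{\GL_2(\BQ_p)}(1\otimes1)_{\mathrm{sm}}$ is the non-split extension $0\to\mathbf1\to\Ind(1\otimes1)_{\mathrm{sm}}\to\mathrm{St}\to0$; tensoring with $(\delta_2\circ\det)\otimes\Sym^{k-1}L^2$ and passing to $\Sigma(\delta_1,\delta_2)=\widetilde\Sigma(\delta_1,\delta_2)/(\delta_2\circ\det)\otimes\Sym^{k-1}L^2$ kills exactly the subrepresentation $(\delta_2\circ\det)\otimes\Sym^{k-1}L^2\subset\ker I_k$, leaving the irreducible $(\delta_2\circ\det)\otimes\Sym^{k-1}L^2\otimes\mathrm{St}$ as the image of $\ker I_k$; this gives (iii). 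Finally I would argue non-splitness: on strong duals the sequence restricts, near the identity, to the non-split sequence $0\to M(s\cdot\lambda)\to M(\lambda)\to L(\lambda)\to0$ of $U(\mathfrak g)$-modules for the integral weight $\lambda$ attached to $1\otimes\delta$, with $M(\lambda)$ indecomposable of simple socle $M(s\cdot\lambda)$ of multiplicity one, and non-splitness descends; equivalently one shows directly that the socle of $\widetilde\Sigma(\delta_1,\delta_2)$ equals $\ker I_k$.

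\textbf{Main obstacle.} The only genuinely non-elementary input is part (i), the topological irreducibility of non-integral-weight principal series, which I would black-box from \cite{ST-an2} and \cite{ST-an}; what remains is bookkeeping — matching the present parametrization (and the exceptional locus $\delta=x^{k-1}$) with theirs, verifying surjectivity and closedness of $I_k$ via the explicit antidifferentiation argument, and pinning down the socle/non-splitness — and I do not expect any serious difficulty beyond this.
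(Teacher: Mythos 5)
Your proposal and the paper take essentially the same route: the paper simply cites Schneider--Teitelbaum \cite{ST-an}, \cite{ST-an2} for this proposition, and your write-up unpacks what that citation provides (irreducibility for non-integral weight, the differential intertwiner $I_k$ and the identification of $\ker I_k$, and the Verma-module non-splitness). One minor imprecision: under $\LA(\RP^1(\delta))\cong\LA(\BZ_p,L)^{\oplus 2}$ the operator $I_k$ does not act diagonally (the chain rule on the ``at infinity'' summand $f_2(z)=\delta(z)f(1/z)$ produces cross-terms, and the scalar on the first summand is $p^{-k}$ rather than $p^k$), so surjectivity should be deduced from the closed-image/coadmissibility argument combined with irreducibility of the target $\widetilde\Sigma(x^{-k}\delta_1,x^k\delta_2)$ -- which you also gesture at -- or simply cited from \cite{ST-an2}, rather than by naive componentwise antidifferentiation.
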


\subsection{$\tilde{\Sigma}(\eta^{-1}\epsilon,
\delta^{-1}\eta)^*\cong\SR^+(\eta)\boxtimes_\delta \RP^1$}
For any $\delta_1,\delta_2\in\widehat{\mathscr{T}}(L)$, let $\GL_2(\BQ_p)$ acts on
$\widetilde{\Sigma}(\delta_1,
\delta_2)^*$
by the formula $\langle f, g\cdot\mu \rangle=\langle g^{-1}\cdot f,
\mu\rangle$ for any $f\in
\widetilde{\Sigma}(\delta_1,
\delta_2)$, $\mu\in
\widetilde{\Sigma}(\delta_1,
\delta_2)^*$
and $g\in\GL_2(\BQ_p)$.
Thus by $(5.1)$, we have
\[
(wf)(x)=\eta(-1)\delta\eta^{-2}(x)f(1/x)
\]
for any $f\in\widetilde{\Sigma}(\eta^{-1}\epsilon,
\delta^{-1}\eta)$. Therefore, by the description of $\LA(\RP^1(\delta))$ given in \S5.1, we see that the map $\mu\mapsto(\mu|_{\BZ_p}, w\mu|_{\BZ_p})$ is a homeomorphism from $\widetilde{\Sigma}(\eta^{-1}\epsilon,
\delta^{-1}\eta)^*$ to
\begin{equation}\label{eq:w-action}
\{(\mu_1,\mu_2)\in\mathscr{D}(\BZ_p,L)\oplus\mathscr{D}(\BZ_p,L)
|\int_{\BZ_p^\times} f(x)\mu_2=\int_{\BZ_p^\times}\eta(-1)(\delta\eta^{-2})(x)f(1/x)\mu_1\},
\end{equation}
where the latter object is equipped with the subspace topology of $\mathscr{D}(\BZ_p,L)\oplus\mathscr{D}(\BZ_p,L)$.

We fix a standard basis $e_\eta\in \SE^+(\eta)$.
\begin{lem}\label{lem:w=wdelta} ~
$\mathscr{A}(w\mu|_{\BZ_p^\times})\otimes e_\eta=w_{\delta}(\mathscr{A}(\mu|_{\BZ_p^\times})\otimes e_\eta)$ for any $\mu\in\widetilde{\Sigma}(\eta^{-1}\epsilon,
\delta^{-1}\eta)^*$
\end{lem}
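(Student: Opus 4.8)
The statement to prove is $\mathscr{A}(w\mu|_{\BZ_p^\times})\otimes e_\eta = w_\delta\big(\mathscr{A}(\mu|_{\BZ_p^\times})\otimes e_\eta\big)$. The plan is to compare both sides using the explicit description of $w_\delta$ on $\mathcal{O}_{\SE^+}\boxtimes\BZ_p^\times$ recorded in Example \ref{example:calE+-stable}, namely formula (\ref{eq:w_delta}), together with the $w$-action on $\widetilde{\Sigma}(\eta^{-1}\epsilon,\delta^{-1}\eta)^*$ obtained in (\ref{eq:w-action}). First I would observe that since $\mu\mapsto(\mu|_{\BZ_p},w\mu|_{\BZ_p})$ lands in the set described by (\ref{eq:w-action}), the restriction to $\BZ_p^\times$ satisfies
\[
\int_{\BZ_p^\times} f(x)\, (w\mu|_{\BZ_p^\times}) = \int_{\BZ_p^\times}\eta(-1)(\delta\eta^{-2})(x) f(1/x)\,(\mu|_{\BZ_p^\times})
\]
for all $f\in\SC^0(\BZ_p,L)$ (a priori for locally analytic $f$, then by density, noting that $\mu|_{\BZ_p}$ lies in $\SD(\BZ_p,L)$ but the pairing against $\SC^0$ of a distribution restricted to $\BZ_p^\times$ is controlled by its being $\varphi\psi$-eliminated — more precisely one works directly with the Amice transforms, which are elements of $(\SR^+)^{\psi=0}$).

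Next I would unwind what the claimed identity means after applying $\mathscr{A}^{-1}$ and pairing against $f\in\SC^0(\BZ_p,L)$. Writing $z = \mathscr{A}(\mu|_{\BZ_p^\times})\otimes e_\eta \in \SE^+(\eta)\boxtimes\BZ_p^\times$, formula (\ref{eq:w_delta}) (applied with the character being the one governing $\SE^+(\eta)$, i.e. incorporating both $\delta$ and the standard basis twist $e_\eta$) gives
\[
\int_{\BZ_p^\times} f(x)\,\mathscr{A}^{-1}(w_\delta(z)) = \int_{\BZ_p^\times} \delta(x) f(1/x)\,\mathscr{A}^{-1}(z),
\]
where one must be careful to track the effect of the standard basis $e_\eta$: the $\sigma_a$-action on $\SE^+(\eta)$ differs from that on $\SE^+$ by $\eta(a)$, and the $\sigma_{-i^2}$-factor and the $\delta(i^{-1})$-factor in (\ref{eq:wD}) together produce the twist $\eta(-1)(\delta\eta^{-2})$ rather than just $\delta$ when one reads off the action on the coefficient function via $\mathscr{A}^{-1}$. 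I would carry out this bookkeeping explicitly: write $e_\eta$ as a standard basis, use $\varphi(e_\eta)=\eta(p)e_\eta$ and $\sigma_a(e_\eta)=\eta(a)e_\eta$, and push these scalars through the limit defining $w_\delta$ in (\ref{eq:wD}) to see that the net character appearing in the integral transform becomes $\eta(-1)(\delta\eta^{-2})(x)$.

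Having done that, the right-hand side of the claimed identity becomes $\int_{\BZ_p^\times}\eta(-1)(\delta\eta^{-2})(x) f(1/x)\,\mathscr{A}^{-1}(\mathscr{A}(\mu|_{\BZ_p^\times})\otimes e_\eta)$, which matches exactly the integral appearing in (\ref{eq:w-action}) for $w\mu|_{\BZ_p^\times}$, i.e. $\int_{\BZ_p^\times} f(x)\,\mathscr{A}^{-1}(\mathscr{A}(w\mu|_{\BZ_p^\times})\otimes e_\eta)$. Since $f\in\SC^0(\BZ_p,L)$ is arbitrary and the Amice transform $\mathscr{A}$ identifies $\SD_0(\BZ_p,L)$ with $\SE^+$ compatibly with all relevant structure, equality of the integrals for all $f$ forces equality of the two elements of $\SE^+(\eta)\boxtimes\BZ_p^\times$, which is the assertion.

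\textbf{Main obstacle.} The only real difficulty is the careful matching of twisting characters in the previous paragraph: formula (\ref{eq:w_delta}) is stated for $\mathcal{O}_\SE$ (untwisted), and one must verify that extending $w_\delta$ to $\SE^+(\eta)$ via the standard basis produces precisely the character $\eta(-1)(\delta\eta^{-2})$ — the appearance of $\eta^{-2}$ (rather than $\eta^{-1}$) coming from the $\sigma_{-i^2}$ in (\ref{eq:wD}) combined with the semilinearity of $\sigma_a$ on $\SE^+(\eta)$, and the sign $\eta(-1)$ from evaluating $\eta$ at $-1$. This is a finite, essentially formal computation using (\ref{eq:wD}), but it is where all the care is needed; everything else is a direct comparison of the two integral formulas (\ref{eq:w_delta}) and (\ref{eq:w-action}).
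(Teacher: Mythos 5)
Your proof is correct and takes essentially the same route as the paper's: compare (\ref{eq:w_delta}) with (\ref{eq:w-action}) and track the $e_\eta$-twist through the $\sigma_{-i^2}$- and $\delta(i^{-1})$-factors in (\ref{eq:wD}) to produce the character $\eta(-1)\delta\eta^{-2}$. The paper handles the passage from $\SE^+\boxtimes\BZ_p^\times$ (where (\ref{eq:w_delta}) is literally stated, for $\SC^0$-functions) to $\SR^+\boxtimes\BZ_p^\times$ (where $\mathscr{A}(\mu|_{\BZ_p^\times})$ actually lives) by a clean density-and-continuity argument, rather than the slightly informal remark in your first paragraph, but the substance of the argument is the same.
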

\begin{proof}
The case $\mathscr{A}(\mu|_{\BZ_p^\times})\in\SE^+\boxtimes\BZ_p^\times$ follows directly
from (\ref{eq:w_delta}) and (\ref{eq:w-action}). Since $\SE^+$ is dense in $\SR^+$, we deduce the case $\mathscr{A}(\mu|_{\BZ_p^\times})\in\SR^+\boxtimes\BZ_p^\times$ by the continuity of $w$ and $\mathscr{A}$.
We then conclude the general case by Proposition \ref{prop:rank1}.
\end{proof}
As a consequence, $\mathscr{A}_{\delta,\eta}(\mu)=(\mathscr{A}(\mu|_{\BZ_p})\otimes e_\eta, \mathscr{A}(w\mu|_{\BZ_p})\otimes e_\eta)$ is an element of $\SR^+(\eta)\boxtimes_\delta \RP^1$.

\begin{prop} \label{prop:map-A} ~
The map $\mathscr{A}_{\delta,\eta}:
\widetilde{\Sigma}(\eta^{-1}\epsilon,
\delta^{-1}\eta)^*
\rightarrow \SR^+(\eta)\boxtimes_\delta \RP^1$
is a $\GL_2(\BQ_p)$-equivariant topological
isomorphism.
\end{prop}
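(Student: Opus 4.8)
The plan is to show that $\mathscr{A}_{\delta,\eta}$ is a continuous $\GL_2(\BQ_p)$-equivariant bijection with continuous inverse, and the crux is the equivariance, since the homeomorphism property will follow from the explicit descriptions already set up. First I would record that the underlying map is a homeomorphism: by the discussion preceding Lemma~\ref{lem:w=wdelta}, the map $\mu\mapsto(\mu|_{\BZ_p},w\mu|_{\BZ_p})$ identifies $\widetilde{\Sigma}(\eta^{-1}\epsilon,\delta^{-1}\eta)^*$ homeomorphically with the subspace (\ref{eq:w-action}) of $\mathscr{D}(\BZ_p,L)\oplus\mathscr{D}(\BZ_p,L)$; then the Amice transform $\mathscr{A}\colon\mathscr{D}(\BZ_p,L)\xrightarrow{\sim}\SR^+$ (tensored with the basis $e_\eta$) is a topological isomorphism compatible with $\varphi,\psi,\Gamma$, and Lemma~\ref{lem:w=wdelta} says it intertwines the residual $w$-relation in (\ref{eq:w-action}) with the relation $\Res_{\BZ_p^\times}(z_2)=w_\delta(\Res_{\BZ_p^\times}(z_1))$ defining $\SR^+(\eta)\boxtimes_\delta\RP^1$. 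Hence $\mathscr{A}_{\delta,\eta}$ is a homeomorphism of topological $L$-vector spaces; it remains to check it respects the $\GL_2(\BQ_p)$-actions.

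For equivariance I would verify compatibility on a set of generators of $\GL_2(\BQ_p)$, namely the matrices appearing in Proposition~\ref{prop:GL2action}: $w$, the scalars $\wvec{a}{0}{0}{a}$, the torus elements $\wvec{a}{0}{0}{1}$ with $a\in\BZ_p^\times$, the element $\wvec{p}{0}{0}{1}$, and the unipotents $\wvec{1}{b}{0}{1}$ with $b\in p\BZ_p$. On the source side the action is given by formula $(5.1)$ and its translation to $\widetilde{\Sigma}(\eta^{-1}\epsilon,\delta^{-1}\eta)^*$ via $\langle f,g\mu\rangle=\langle g^{-1}f,\mu\rangle$; on the target side it is characterized by the five conditions of Proposition~\ref{prop:GL2action} with $D=\SR^+(\eta)$ and the character $\delta$. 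The action of $w$ is built into the definition of $\mathscr{A}_{\delta,\eta}$ (it swaps the two components, matching condition~(i)), and the scalar and torus cases reduce to elementary properties of the Amice transform under $x\mapsto ax$ together with the twist by $e_\eta$ (using $\sigma_a(e_\eta)=\eta(a)e_\eta$), matching conditions (ii)--(iii). For $\wvec{p}{0}{0}{1}$ one uses that $\mathscr{A}$ intertwines $\varphi$ on distributions (via $f\mapsto f(px)$) with $\varphi$ on $\SR^+$, together with $\varphi(e_\eta)=\eta(p)e_\eta$, to get condition~(iv); the dual action on the second component then gives the $\psi$-relation there. The unipotent case for $\wvec{1}{b}{0}{1}$ with $b\in p\BZ_p$ is the one requiring care, since the operator $u_b$ in condition~(v) is a genuine composite involving two copies of $w_\delta$; here I would again compare both sides after applying $\mathscr{A}^{-1}$ and use Lemma~\ref{lem:w=wdelta} twice.

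The main obstacle I anticipate is precisely this last verification: checking that the $\GL_2(\BQ_p)$-action on $\widetilde{\Sigma}(\eta^{-1}\epsilon,\delta^{-1}\eta)^*$, when transported through $\mu\mapsto(\mu|_{\BZ_p},w\mu|_{\BZ_p})$ and the Amice transform, actually satisfies all five relations of Proposition~\ref{prop:GL2action}, and in particular relation~(v) with the nontrivial operator $u_b$. Since Proposition~\ref{prop:rank1-GL2action} guarantees that there is a \emph{unique} continuous $\GL_2(\BQ_p)$-action on $\SR^+(\eta)\boxtimes_\delta\RP^1$ satisfying those relations, it suffices to know that the push-forward action under our homeomorphism satisfies them — then the two actions must agree and equivariance follows. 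Rather than recomputing $u_b$ by hand, I would invoke that the formulas of Proposition~\ref{prop:GL2action} were \emph{derived} from the induced-representation action on $\SC^{u}(\RP^1(\delta))$-type spaces in the first place (this is the content of \cite[Proposition II.1.8]{Colmez-Langlands}); since $\widetilde{\Sigma}(\eta^{-1}\epsilon,\delta^{-1}\eta)$ is exactly such a space with the action $(5.1)$, the dual action on its topological dual is described by the \emph{same} relations, and the compatibility is then a formal consequence of Lemma~\ref{lem:w=wdelta} matching the $w$-operators on the two sides. This reduces the ``hard'' step to bookkeeping about how $\mathscr{A}$ and $e_\eta$ interact with $\varphi$, $\sigma_a$, and the residue maps, which is routine.
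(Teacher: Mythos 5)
Your plan follows the paper's proof almost exactly: identify $\widetilde{\Sigma}(\eta^{-1}\epsilon,\delta^{-1}\eta)^*$ with the subspace (\ref{eq:w-action}), use Lemma~\ref{lem:w=wdelta} to see that $\mathscr{A}_{\delta,\eta}$ lands in and is a homeomorphism onto $\SR^+(\eta)\boxtimes_\delta\RP^1$, and then check $\GL_2(\BQ_p)$-equivariance on the five generator types from Proposition~\ref{prop:GL2action}. Cases (i)--(iv) are handled just as you describe. For case (v) the paper does carry out the computation you gesture at: it first records the translation identity $\mathscr{A}(\wvec{1}{b'}{0}{1}\mu|_{U})=\wvec{1}{b'}{0}{1}\mathscr{A}(\mu|_{U})$, then rewrites $u_b(\Res_{p\BZ_p}\mathscr{A}_{\delta,\eta}(w\mu))$ by replacing each occurrence of $w_\delta$ with $w$ via Lemma~\ref{lem:w=wdelta}, absorbing the scalar $\delta(1+b)$ using the central character (case (ii)), and observing that the resulting matrix product $\wvec{1}{-1}{0}{1}\,w\,\wvec{(1+b)^{-2}}{b(1+b)^{-1}}{0}{1}\,w\,\wvec{1}{1/(1+b)}{0}{1}\,w\,\wvec{1/(1+b)}{0}{0}{1/(1+b)}$ telescopes to $\wvec{0}{1}{1}{b}=w\wvec{1}{b}{0}{1}$; that is precisely your ``use Lemma~\ref{lem:w=wdelta} twice'' made concrete.

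One caveat about the alternative ``formal'' route you sketch at the end: the claim that Colmez's formulas in \cite[Prop.\ II.1.8]{Colmez-Langlands} ``were derived from the induced-representation action on $\SC^u(\RP^1(\delta))$-type spaces'' is not accurate. Those formulas are derived intrinsically from the $(\varphi,\Gamma)$-module-as-sheaf-on-$\RP^1$ formalism, not by dualizing an induced representation; that the dual induced action, transported through $\mathscr{A}_{\delta,\eta}$, satisfies them is exactly the content one must prove, and it is what the $u_b$ computation establishes. Invoking the uniqueness in Proposition~\ref{prop:rank1-GL2action} only helps once you already know the transported action satisfies the five relations, so the shortcut does not actually let you skip the verification. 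Since you also describe the direct computation, the overall plan is sound and matches the paper's.
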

\begin{proof}
By the description of $\widetilde{\Sigma}(\eta^{-1}\epsilon,
\delta^{-1}\eta)^*$ given in (\ref{eq:w-action}), one sees easily that $\mathscr{A}_{\eta,\delta}$ is an embedding.
On the other hand, for any $z=(z_1\otimes e_{\eta}, z_2\otimes e_{\eta})\in\SR^+(\eta)\boxtimes_\delta \RP^1$, if we put $\mu=\mathscr{A}^{-1}(z_1)+w\mathscr{A}^{-1}(\Res_{p\BZ_p}(\mu_2))$, then $\mathscr{A}_{\eta,\delta}(\mu)=z$. Hence
$\mathscr{A}_{\eta,\delta}$ is a topological isomorphism.

To prove that $\mathscr{A}_{\eta,\delta}$ is $\GL_2(\BQ_p)$-equivariant, we only need to
show
\begin{equation}
\mathscr{A}_{\eta,\delta}(g\cdot\mu)=g\cdot\mathscr{A}_{\eta,\delta}(\mu)
\end{equation}
for $(1)$ $g=\wvec{0}{1}{1}{0}$; $(2)$ $g=\wvec{a}{0}{0}{a},a\in\BQ_p^\times$; $(3)$ $g=\wvec{a}{0}{0}{1}, a\in\BZ_p^\times$; $(4)$ $g=\wvec{p}{0}{0}{1}$; $(5)$ $g=\wvec{1}{b}{0}{1}, b\in p\BZ_p$.

Case $(1)$ is trivial. Both $\widetilde{\Sigma}(\eta^{-1}\epsilon,\delta^{-1}\eta)^*$
and $\SR^+(\eta)\boxtimes_\delta \RP^1$ have central characters $\delta$; this proves $(2)$. For any $a\in\BZ_p^\times$, we have
\[
\int_{\BZ_p}f(x)(\wvec{a}{0}{0}{1})\mu)=\int_{\BZ_p}(\wvec{a^{-1}}{0}{0}{1}f(x))\mu=\int_{\BZ_p}\eta(a^{-1})f(ax)\mu=
\int_{\BZ_p}f(x)(\eta(a^{-1})(\sigma_a(\mu)));
\]
this yields $(3)$. For case $(4)$, we have
\begin{equation*}
\begin{split}
\int_{p\BZ_p} f(x)(\wvec{p}{0}{0}{1}\mu)&=\int_{p\BZ_p}\wvec{p^{-1}}{0}{0}{1}(f(x) 1_{\BZ_p}(x))\mu\\
&=\int_{\BZ_p}\eta(p)f(px)\mu\\
&=\int_{\BZ_p}f(x)(\eta(p)\varphi(\mu|_{\BZ_p})),
\end{split}
\end{equation*}
yielding $\wvec{p}{0}{0}{1}\mu|_{p\BZ_p}=\eta(p)\varphi(\mu|_{\BZ_p})$. This implies
\begin{equation}\label{eq:map-A-3}
\Res_{p\BZ_p}(\mathscr{A}_{\delta,\eta}(\wvec{p}{0}{0}{1}\mu))=\varphi(\Res_{\BZ_p}(\mathscr{A}_{\delta,\eta}(\mu))).
\end{equation}
A similar computation shows that
\begin{equation}\label{eq:map-A-4}
\begin{split}
\Res_{\BZ_p}(w(\mathscr{A}_{\delta,\eta}&(\wvec{p}{0}{0}{1}\mu)))
=\Res_{\BZ_p}(\mathscr{A}_{\delta,\eta}(\wvec{1}{0}{0}{p}w\mu))
=\delta(p)\Res_{\BZ_p}(\mathscr{A}_{\delta,\eta}(\wvec{p^{-1}}{0}{0}{1}w\mu))\\
&=\delta(p)\psi(\Res_{\BZ_p}((\mathscr{A}_{\delta,\eta}(w\mu))))
=\delta(p)\psi(\Res_{\BZ_p}(w(\mathscr{A}_{\delta,\eta}(\mu)))).
\end{split}
\end{equation}
This proves $(4)$.

For case $(5)$, first note that
$$\int f(x)
\wvec{1}{b'}{0}{1}\mu=\int\wvec{1}{-b'}{0}{1}f(x)\mu=\int f(x+b')\mu.$$
This implies
\begin{equation}\label{eq:map-A-5}
\mathscr{A}(\wvec{1}{b'}{0}{1}\mu|_{U})=\wvec{1}{b'}{0}{1}\mathscr{A}(\mu|_{U})
\end{equation}
for any $b'\in U\subseteq\BZ_p$. Hence $\Res_{\BZ_p}(\mathscr{A}_{\delta,\eta}(\wvec{1}{b}{0}{1}\mu))
=\wvec{1}{b}{0}{1}\Res_{\BZ_p}(\mathscr{A}_{\delta,\eta}(\mu))$.
It remains to check that
\begin{equation}
\Res_{p\BZ_p}(\mathscr{A}_{\delta,\eta}(w\wvec{1}{b}{0}{1}\mu))=
u_b(\Res_{p\BZ_p}(\mathscr{A}_{\delta,\eta}(w\mu))),
\end{equation}
where
$$u_b=\delta(1+b)\wvec{1}{-1}{0}{1}\circ w_\delta \circ
\wvec{(1+b)^{-2}}{b(1+b)^{-1}}{0}{1}\circ w_\delta \circ \wvec{1}{1/(1+b)}{0}{1}.$$ By (\ref{eq:map-A-5}), Lemma \ref{lem:w=wdelta} and case (2),
\begin{equation*}
\begin{split}u_b(\Res_{p\BZ_p}(\mathscr{A}_{\delta,\eta}(w\mu)))&=\Res_{p\BZ_p}(\mathscr{A}_{\delta,\eta}(
\delta(1+b)\wvec{1}{-1}{0}{1}w\wvec{(1+b)^{-2}}{b(1+b)^{-1}}{0}{1}w \wvec{1}{1/(1+b)}{0}{1}w\wvec{1/(1+b)}{0}{0}{1/(1+b)}\mu))\\
&=\Res_{p\BZ_p}(\mathscr{A}_{\delta,\eta}(\wvec{0}{1}{1}{b}\mu))\\
&=\Res_{p\BZ_p}(\mathscr{A}_{\delta,\eta}(w\wvec{1}{b}{0}{1}\mu)).
\end{split}
\end{equation*}
This proves $(5.12)$.
\end{proof}

\section{Determination of locally analytic vectors} \label{sec:GL2-side}

\subsection{$\Sigma(s)$ and Emerton's conjecture}

We first recall the locally analytic representation $\Sigma(k,\SL)$ of $\GL_2(\BQ_p)$ which is originally constructed by Breuil (in the case $\SL\neq \infty$). We refer the reader to \cite[2.1]{Breuil-DS} and \cite[5.1]{Emerton-lg} for more details. Fix an integer $k\geq2$. Given $\SL\in\RP^1(L)$, let $\sigma(\SL)$ denote the representation of $\mathrm{B}(\BQ_p)$ on $L^2=Le_1\oplus Le_2$ defined by $\wvec{a}{b}{0}{d}e_1=e_1, \wvec{a}{b}{0}{d}e_2=e_1+(\log_{\SL}a-\log_{\SL}d)e_2$. One thus has a non-split extension
\begin{equation}\label{eq:sigma(L)}
0\longrightarrow 1\longrightarrow\sigma(\SL)
\longrightarrow1\longrightarrow 0.
\end{equation}
We put $\sigma(k,\SL)=\sigma(\SL)\otimes\chi_k$ where $\chi_k:\mathrm{B}(\BQ_p)\ra L^\times$ is the character $\wvec{a}{b}{0}{d}\mapsto|ad|^{\frac{k-2}{2}}d^{k-2}$. Twisting (\ref{eq:sigma(L)}) by $\chi_k$, and then taking locally analytic parabolic induction, one obtains an exact sequence of locally analytic representations
\begin{equation}\label{eq:Indsigma(k,L)}
0\longrightarrow(\Ind^{\GL_2(\BQ_p)}_{\mathrm{B}(\BQ_p)}\chi_k)_{\mathrm{an}}\longrightarrow (\Ind^{\GL_2(\BQ_p)}_{\mathrm{B}(\BQ_p)}\sigma(k,\SL))_{\mathrm{an}}
\stackrel{s_{\SL}}{\longrightarrow}(\Ind^{\GL_2(\BQ_p)}_{\mathrm{B}(\BQ_p)}\chi_k)_{\mathrm{an}}\longrightarrow 0.
\end{equation}
Note that $\chi_k=|x|^{\frac{k-2}{2}}\otimes x^{k-2}|x|^{\frac{k-2}{2}}$. Thus $(\Ind^{\GL_2(\BQ_p)}_{\mathrm{B}(\BQ_p)}\chi_k)_{\mathrm{an}}=\widetilde{\Sigma}( x^{k-1}|x|^{\frac{k}{2}},|x|^{\frac{k-2}{2}})$ which has $(|x|^{\frac{k-2}{2}}\circ\det)\otimes\Sym^{k-2}L^2$ as a subrepresentation following the discussion above. We define
\begin{equation}\label{eq:sigma(k,L)}
\Sigma(k,\SL)=s_{\SL}^{-1}((|x|^{\frac{k-2}{2}}\circ\det)\otimes\Sym^{k-2}L^2)
/(|x|^{\frac{k-2}{2}}\circ\det)\otimes\Sym^{k-2}L^2.
\end{equation}
One thus has an extension of locally analytic representations
\begin{equation} \label{eq:exact-sq-Sigma}
0\longrightarrow\Sigma(x^{k-1}|x|^{\frac{k}{2}},|x|^{\frac{k-2}{2}})\longrightarrow \Sigma(k,\SL)
\longrightarrow(|x|^{\frac{k-2}{2}}\circ\det)\otimes\Sym^{k-2}L^2\longrightarrow 0
\end{equation}

From now on, let $s=(\delta_1,\delta_2,\SL)\in\SS_\irr$. We define
\begin{equation}\label{eq:Sigma(s)}
\Sigma(s)=\left\{
         \begin{array}{ll}
         \Sigma(k+1,\SL)\otimes((\delta_2|x|^{\frac{2-k}{2}})\circ\det)& \text{if $w(s)=k$ is a positive integer and $\delta_s=x^{k-1}$}; \\
          \widetilde{\Sigma}(\delta_1, \delta_2) & \text{otherwise}.
         \end{array}
       \right.
\end{equation}
It follows that in the first case $\Sigma(s)$ sits in the exact sequence
\begin{equation}\label{eq:exact-sq-Sigma(s)}
0\longrightarrow\Sigma(\delta_1,\delta_2)\longrightarrow\Sigma(s)
\longrightarrow(\delta_2\circ\det)\otimes\Sym^{k-1}L^2 \longrightarrow 0.
\end{equation}%

Following \cite[2.2]{Breuil-DS}, we now give a geometric model of $\Sigma(s)$ in the first case. Let $\LA(\RP^1(x^{k-1},\SL))$ be the space of locally analytic
functions $H$ on $\BQ_p$ with values in $L$ such that
\begin{equation} \label{eqn:cond-H}
H(z)=z^{k-1}(\sum_{n=0}^{+\infty}\frac{a_n}{z^n})+P(z)\log_\SL(z),
\end{equation}
for $|z|\gg 0$, where $a_n\in L$, $P(z)$ is a polynomial of degree $\leq k-1$
with coefficients in $L$. Let $\GL_2(\BQ_p)$ act on this space by
{\small $$
 (\wvec{a}{b}{c}{d}\star_s H )(z) =
\delta_2(ad-bc)(-cz+a)^{k-1}
\left[H\left(\frac{dz-b}{-cz+a}\right) -\frac{1}{2}
P\left(\frac{dz-b}{-cz+a}\right)\log_\SL\left(\frac{ad-bc}{(-cz+a)^2}\right)\right].
$$}

\noindent Note that $\LA(\RP^1(x^{k-1}))$ is exactly the subspace
consisting of functions $H$ with $P=0$ in the expression
(\ref{eqn:cond-H}).
An easy computation shows that the $L$-vector subspace generated by $x^i$,
$0\leq i\leq k-1$, is $\GL_2(\BQ_p)$-invariant. We define $C(x^{k-1},\SL)$ to be the quotient of
$\LA(\RP^1(x^{k-1},\SL))$ by this subspace. It turns out that the resulting
representation of $\GL_2(\BQ_p)$ on $C(x^{k-1},\SL)$ is topologically isomorphic to $\Sigma(s)$, and the natural map
$\LA(\RP^1(x^{k-1}))\ra C(x^{k-1},\SL)$ gives rise the inclusion $\Sigma(\delta_1,\delta_2)\hookrightarrow\Sigma(s)$. We denote by $C(x^{k-1})$ the image of the map $\LA(\RP^1(x^{k-1}))\ra C(x^{k-1},\SL)$. Then the quotient $C(x^{k-1},\SL)/C(x^{k-1})$
is a $k$-dimensional $L$-vector space spanned by $1_{D(\infty,1)}\cdot x^n\log_\SL x$, $0\leq
n\leq k-1$ which is isomorphic to $(\delta_2\circ\det)\otimes\Sym^{k-1}L^2$. By this geometric model, one can show that (cf. \cite[Lemme 2.4.2]{Breuil-DS}) $(\delta_2\circ\det)\otimes \Sym^{k-1}L^2\otimes\mathrm{St}$ (resp. $(\delta_2\circ\det)\otimes\Sym^{k-1}L^2$) is the only topologically irreducible subrepresentation (resp. quotient representation) of $\Sigma(s)$. In particular, the extension (\ref{eq:exact-sq-Sigma(s)}) is non-split.

Although it is known to experts that there is a natural morphism $\Sigma(s)\ra\Pi(s)$ which realizes $\Pi(s)$ as the universal completion of $\Sigma(s)$, we can not find a reference for this result. For our purpose, we rephrase the work of Breuil and Emerton in the following proposition to construct the desired morphism. We first note that the natural inclusion $\LA(\RP^1(\delta_s))\subset\SC^{u(s)}(\RP^1(\delta_s))$ induces a $\GL_2(\BQ_p)$-equivariant continuous map
\[
\iota_s:\widetilde{\Sigma}(\delta_1,\delta_2)\ra\Pi(s), \quad f\mapsto \bar{f}.
\]

\begin{prop} \label{prop:injn-iotas}
For $s\in\SS_\irr$ non-exceptional, the $\GL_2(\BQ_p)$-equivariant continuous map $\iota_s:\widetilde{\Sigma}(\delta_1,\delta_2)\ra \Pi(s)$ induces an injection
$\iota_s:\Sigma(\delta_1,\delta_2)\ra \Pi(s)$. Moreover, in the case when $\delta_s=x^{k-1}$ for some positive integer $k$, the map $\iota_s:\Sigma(\delta_1,\delta_2)\ra \Pi(s)$ extends naturally to an injective map $\iota_s: \Sigma(s)\rightarrow
\Pi(s)$ which is continuous and $\GL_2(\BQ_p)$-equivariant.
\end{prop}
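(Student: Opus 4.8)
The plan is to analyze the map $\iota_s$ by comparing it with the explicit geometric model of $\Sigma(s)$ recalled above, exploiting the fact that both $\widetilde{\Sigma}(\delta_1,\delta_2)$ and $\Pi(s)$ are realized on function spaces on $\RP^1$, and that the map $\iota_s$ is essentially the tautological inclusion $\LA(\RP^1(\delta_s))\subset\SC^{u(s)}(\RP^1(\delta_s))$ followed by the quotient $B(s)\to\Pi(s)=B(s)/\widehat{M}(s)$. So the core question is to understand $\ker(\iota_s) = \widetilde{\Sigma}(\delta_1,\delta_2)\cap\widehat{M}(s)$ inside $B(s)$. First I would treat the generic case $\delta_s\neq x^k$ for all $k\in\BN$: here $M(s)$ is spanned by $x^i$ for $0\le i<u(s)$ together with the functions $(x-a)^{-i}\delta_s(x-a)$ for $0\le i<u(s)$ and $a\in\BQ_p$. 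Since $u(s)<w(s)$ is not generally an integer in the non-exceptional generic situation (or when $w(s)\notin\BN$ there is nothing to subtract), one checks that none of the generators of $M(s)$ lie in $\LA(\RP^1(\delta_s))$ — they fail the local analyticity or the growth condition at the relevant points — and moreover that their closure $\widehat{M}(s)$ meets $\LA(\RP^1(\delta_s))$ trivially; thus $\iota_s$ is injective and $\Sigma(\delta_1,\delta_2)=\widetilde{\Sigma}(\delta_1,\delta_2)$ in this case. I would make the "closure meets trivially" step precise using the Banach-space norm on $B(s)$ built from the $\SC^{u(s)}$-norms together with the Amice transform description, arguing that a locally analytic function which is a limit of elements of $M(s)$ in the $\SC^{u(s)}$-topology must already lie in the span of the $x^i$, which is killed when we pass to $\Sigma(\delta_1,\delta_2)$ (recall $\Sigma(\delta_1,\delta_2)=\widetilde{\Sigma}(\delta_1,\delta_2)$ unless $\delta_s=x^{k-1}$).

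Next I would handle the exceptional-looking case $\delta_s=x^{k-1}$ for a positive integer $k$, which is the substantive part. Here $M(s)$ is generated by $M(s)'$ — spanned by functions $(x-a)^{j}\log_\SL(x-a)$ with $[\tfrac{k+1}{2}]\le j\le k$, suitably combined so the polynomial part has degree $<u(s)$ — together with $x^i$ for $0\le i\le k-1$. The key point is that these $\log_\SL$-functions are precisely the extra functions appearing in the geometric model $\LA(\RP^1(x^{k-1},\SL))$: a function $H$ with a nonzero $P(z)\log_\SL(z)$ tail. So I would argue that under the identification of $C(x^{k-1},\SL)$ with a subquotient of $B(s)$ via the map sending $H\mapsto$ (its image in $\SC^{u(s)}(\RP^1(\delta_s))$), the subspace $\widehat{M}(s)\cap\LA(\RP^1(x^{k-1},\SL))$ is exactly the span of $x^i$, $0\le i\le k-1$ — i.e. it is the kernel of the map $\LA(\RP^1(x^{k-1},\SL))\to C(x^{k-1},\SL)$. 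This gives the factorization $\iota_s:\Sigma(s)=C(x^{k-1},\SL)\hookrightarrow\Pi(s)$, and restricting to the subspace with $P=0$ gives $\iota_s:\Sigma(\delta_1,\delta_2)\hookrightarrow\Pi(s)$. Continuity and $\GL_2(\BQ_p)$-equivariance are inherited from the ambient inclusion $\LA(\RP^1(x^{k-1},\SL))\hookrightarrow B(s)$, since the $\GL_2(\BQ_p)$-action on the geometric model is compatible with $\star_s$ (this is the content of Breuil's computation, \cite[2.2]{Breuil-DS}, and we may cite it).

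The main obstacle I anticipate is proving the precise equality $\widehat{M}(s)\cap\LA(\RP^1(x^{k-1},\SL)) = \langle x^i : 0\le i\le k-1\rangle$ — in particular the inclusion "$\subseteq$", i.e. that no locally analytic $H$ with a genuine $\log_\SL$-tail, nor one with a nontrivial analytic part, can be approximated in the $\SC^{u(s)}$-norm by linear combinations of the generators of $M(s)$ beyond the obvious polynomial redundancy. To control this I would pass to the dual side: by Theorem \ref{thm:Amice-rank2}, $(\Pi(\check{s})^*=D^\natural(s)\boxtimes\RP^1$ is identified with $\SD_{u(s)}(\BQ_p)$ via the Amice transform, and $\widehat{M}(s)^\perp\subset B(s)^*$ corresponds to the distributions annihilating $M(s)$; one then computes which distributions of order $u(s)$ pair to zero with all the $(x-a)^j\log_\SL(x-a)$ and with the $x^i$, and checks that these suffice to separate the claimed quotient. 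Equivalently — and this is cleaner — I would invoke Breuil's own analysis: the proposition is really a restatement, in the language of this paper, of the fact (proved in \cite{Breuil-DS} and reformulated in \cite[5.1, 6.7]{Emerton-lg}) that $\Sigma(s)$ maps to the reduction $B(s)/\widehat M(s)$ with kernel exactly the locally algebraic part that is already quotiented out in forming $\Sigma(s)$. So in the write-up I would set up the geometric models, record the compatibility of actions, and then reduce the injectivity statement to \cite[Lemme 2.4.2 and \S2.2]{Breuil-DS} together with the non-exceptionality hypothesis (which is exactly what guarantees $\widehat M(s)$ does not swallow any more of $\Sigma(s)$ — if $s$ were exceptional, $\delta_s=x^{k-1}|x|^{-1}$ and the Steinberg-type subrepresentation would collapse, breaking injectivity). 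This keeps the argument short and isolates the one genuinely new check: that passing from $\SC^{u(s)}$ to its locally analytic subspace is compatible with the two descriptions of the kernel.
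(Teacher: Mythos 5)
Your approach is genuinely different from the paper's, and it contains a gap at the crucial step. You propose to determine $\ker(\iota_s) = \widetilde{\Sigma}(\delta_1,\delta_2)\cap\widehat{M}(s)$ by inspecting the generators of $M(s)$: you observe that the generators $(x-a)^{-i}\delta_s(x-a)$ (resp.\ the log-tail functions of $M(s)'$) are not locally analytic on $\RP^1(\delta_s)$, and you then assert that the closure $\widehat M(s)$ meets $\LA(\RP^1(\delta_s))$ only in $\langle x^i\rangle$. That second assertion does not follow from the first: a Banach closure of a span of functions that fail local analyticity can perfectly well contain locally analytic limits, and nothing in the description of $M(s)$ a priori prevents it. You acknowledge the difficulty and suggest passing to the dual via the Amice transform and a pairing computation, but this is not carried out, and what you call the ``cleaner'' alternative (cite Breuil's universal-completion result) is essentially what one is trying to re-prove in the first half of the statement. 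So the injectivity on $\Sigma(\delta_1,\delta_2)$ is not actually established by the proposal.

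The paper sidesteps the analytic computation of $\widehat{M}(s)\cap\LA(\RP^1(\delta_s))$ entirely by Emerton's representation-theoretic argument, which you should contrast with your plan. The point is to use density plus irreducibility rather than to identify the kernel explicitly. Since $\LA$ is dense in $\SC^{u(s)}$, $\iota_s$ has dense image, hence is nonzero because $\Pi(s)\neq0$ by Theorem~\ref{thm:Amice-rank2}. If $w(\delta_s)\notin\BN$, Proposition~\ref{prop:ST-an}(i) says $\Sigma(\delta_1,\delta_2)$ is topologically irreducible, so nonzero forces injective. If $w(\delta_s)\in\BN$, every proper admissible subrepresentation of $\Sigma(\delta_1,\delta_2)$ sits inside the image of $\ker(I_k)$, so it suffices to be injective there. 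Here the key input you do not use appears: since $k=w(s)>u(s)$, the space of locally polynomial functions of degree $\le k-1$ is dense in $\SC^{u(s)}$ by Amice--V\'elu and Vishik, so $\iota_s(\ker I_k)$ is dense in $\Pi(s)$, hence infinite-dimensional; the Jordan--H\"older structure of $\ker I_k$ then forces injectivity on it (the only nontrivial quotient is finite-dimensional when $\delta_s\neq x^{k-1}$; the image of $\ker I_k$ in $\Sigma(\delta_1,\delta_2)$ is the irreducible $\Sym^{k-1}L^2$-twist when $\delta_s=x^{k-1}$). This is exactly where non-exceptionality is used and where your proposal has no replacement argument. For the extension to $\Sigma(s)$ your plan and the paper's agree in spirit: both ultimately invoke Breuil's work. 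The paper makes Breuil's construction explicit by extending a distribution $\mu\in\Pi(s)^*$ to pair against $x^i\log_\SL(x)1_{D(\infty,n)}$ using an auxiliary $l_i\in M(s)'$ so that $l_i+x^i\log_\SL(x)1_{D(\infty,n)}\in\SC^{u(s)}(\RP^1(\delta_s))$, and then cites \cite[Proposition 4.3.5]{Breuil-DS} and \cite[Corollaire 3.3.4]{Breuil-com} for injectivity. Your version of this step, phrased via the geometric model and invoking \cite[Lemme 2.4.2]{Breuil-DS}, is reasonable, but the preceding gap in the $\Sigma(\delta_1,\delta_2)$-injectivity means the proposal as written does not prove the proposition.
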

\begin{proof}
If $\delta_s=x^{k-1}$, since the subrepresentation $(\delta_2\circ\det)\otimes\Sym^{k-1}L^2$, which consists of polynomials of degree $\leq k-1$, is contained in $M(s)$, $\iota_s$ induces a map $\Sigma(\delta_1,\delta_2)\ra\Pi(s)$. The injectivity of $\iota_s$ on $\Sigma(\delta_1,\delta_2)$ is proved by Emerton in \cite[Lemma 6.7.2]{Emerton-lg}.
We rewrite his proof in our set up as below for the reader's convenience.

 We first have that
$\iota_s(\Sigma(\delta_1,\delta_2))$ is dense in $\Pi(s)$ since $\LA(\RP^1(\delta_s))$ is dense in $\SC^{u(s)}(\RP^1(\delta_s))$. Hence $\iota_s$ is nonzero because $\Pi(s)\neq 0$ by Theorem \ref{thm:Amice-rank2}. If $w(\delta_s)\notin\mathbb{N}$, $\Sigma(\delta_1,\delta_2)$ is topologically irreducible by Proposition \ref{prop:ST-an}. Thus $\iota_s$ is either injective or zero. It therefore follows that $\iota_s$ must be injective.

In case $w(\delta_s)\in\mathbb{N}$, we put $k=w(s)=w(\delta_s)+1$. We see from Proposition \ref{prop:ST-an} that all the proper admissible subrepresentations of $\Sigma(\delta_1,\delta_2)$ are contained in the image of $\ker(I_k)$. Thus it reduces to show that $\iota_s$ is injective on the image of $\ker(I_k)$.
Note that $k=w(s)>u(s)$. Therefore $\LP^{[0,k-1]}(\BZ_p,L)$, the space of locally polynomial functions of degree $\leq k-1$ on $\BZ_p$, is dense in $\SC^{u(s)}(\BZ_p,L)$ by the classical theorem of Amice-V\'elu and Vishik. We thus deduce that $\iota_s(\ker(I_k))$ is dense in $\Pi(s)$. It follows that $\iota_s(\ker(I_k))$ is infinite dimensional because $\Pi(s)$ is infinite dimensional. If $\delta_s\neq x^{k-1}$, the only possible nontrivial quotient of $\ker(I_k)$ is the finite dimensional representation $(\delta_2|x|^{-1}\circ\det)\otimes\Sym^{k-1}L^2)$. Hence $\iota_s$ must be injective on
$\ker(I_k)$ in this case.
If $\delta_s=x^{k-1}$, note that the image of $\ker(I_k)$ in $\Sigma(\delta_1,\delta_2)$, which is isomorphic to $(\delta_2\circ\det)\otimes\Sym^{k-1}L^2$, is irreducible. It follows that $\iota_s$ is injective on the image of $\ker(I_k)$ as well.

Now suppose $\delta_s=x^{k-1}$. The extension of $\iota_s$ to $\Sigma(s)$ is actually due to Breuil who identifies $\Pi(s)$ with the universal unitary completion of $\Sigma(s)$ and shows that the natural map $\Sigma(s)\ra\Pi(s)$ is injective as long as $\Pi(s)\neq0$ (\cite[Proposition 4.3.5]{Breuil-DS}, \cite[Corollaire 3.3.4]{Breuil-com}). We briefly recall his construction of the natural map $\Sigma(s)\ra\Pi(s)$ as below. One easily sees that Breuil's map extends $\iota_s$.
For any $0\leq i\leq k-1$, and
$$l_i(x)=\sum_{u\in U} \lambda_u (x-a_u)^{j_u}\log_\SL(x-a_u)
$$
where $U$ is a finite set, $j_u$ are integers between $[\frac{k+1}{2}]$ and $k-1$, $\lambda_u\in L$,
$a_u\in\BQ_p$ such that $\deg(\sum_{u\in
U}\lambda_u(x-a_u)^{j_u}+x^i)<u(s)$, it follows from \cite[Lemme 3.3.2]{Breuil-com} that $l_i(x)+x^i\log_{\SL}(x)1_{D(\infty,n)}\in\SC^{u(s)}(\RP^1(\delta_s))$ for $n\in\mathbb{Z}$. We thus define
\[
\int_{D(\infty,n)}x^i\log_{\SL}(x)\mu(x)=\int_{\RP^{1}(\BQ_p)}(l_i(x)+x^i\log_{\SL}(x)1_{D(\infty,n)})\mu(x)
\]
for any $\mu\in\Pi(s)^*$; this is independent of the choice of $l_i(x)$ because the difference of any such two $l_i's$ lies in $M(s)'$ which is killed by $\mu$. By this way, we extend $\mu$ to an element of $C(\delta_s,\SL)^*$. This yields a continuous $\GL_2(\BQ_p)$-equivariant morphism $\Pi(s)^*\ra \Sigma(s)^*$. Taking dual of this morphism, we get Breuil's map $\Sigma(s)\ra\Pi(s)$.
\end{proof}
We are now in the position to reformulate Emerton's conjecture for non-exceptional $s$. Note that $\iota_s(\Sigma(s))\subset\Pi(s)_{\mathrm{an}}$ since $\Sigma(s)$ is a locally analytic representation.
\begin{conj} \label{conj:Emerton-uni}
For $s\in \SS_\irr$ non-exceptional, the cokernel of the inclusion
$\iota_s: \Sigma(s)\rightarrow \Pi(s)_\an$ is isomorphic to
$\widetilde{\Sigma}(\delta_2,\delta_1)$ as locally analytic $\GL_2(\BQ_p)$-representations. Thus the space of
locally analytic vectors $\Pi(s)_\an$ sits in a short exact sequence
of locally analytic $\GL_2(\BQ_p)$-representations
\begin{equation} \label{eq:exact-Emerton}
0\longrightarrow\Sigma(s)\longrightarrow\Pi(s)_\an \longrightarrow
\widetilde{\Sigma}(\delta_2,\delta_1) \longrightarrow 0.
\end{equation}
\end{conj}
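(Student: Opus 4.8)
The plan is to reformulate everything on the dual side via Colmez's identification $(\Pi(\check{s})_\an)^* = D_\rig^\natural(s)\boxtimes\RP^1$ and then exploit the orthogonality of Proposition~\ref{prop:Drig-orthogonal} together with the triangulation of $\D_\rig(V(s))$. First I would establish the exact sequence~(\ref{eq:exact-intro}), i.e.
\begin{equation*}
0\longrightarrow\SR(\delta_1)\boxtimes\RP^1\longrightarrow \D_\rig(V(s))\boxtimes\RP^1\longrightarrow\SR(\delta_2)\boxtimes\RP^1\longrightarrow 0,
\end{equation*}
by taking the short exact sequence~(\ref{eq:exact-sq-tr}) of $(\varphi,\Gamma)$-modules, applying the functor $-\boxtimes\RP^1$ (which is exact here because $\Res_{\BZ_p^\times}$ and $w_\delta$ behave well under the operations), and checking $\GL_2(\BQ_p)$-equivariance using the explicit isomorphism $\SA_s$ of Theorem~\ref{thm:Amice-rank2}. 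I would then dualize via the perfect pairing $\{\cdot,\cdot\}_{\RP^1}$ to obtain~(\ref{eq:dual-exact-intro}) for $\check{s}$.

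Next I would run the diagram chase around~(\ref{eq:comm-diag-intro}). The natural inclusion $\widetilde{\Sigma}(\delta_2,\delta_1)=\left(\Ind^{\GL_2}_{\RB}\delta_2\otimes\delta_1\epsilon^{-1}\right)_\an\hookrightarrow\Pi(s)_\an$ dualizes (using Proposition~\ref{prop:map-A}, which identifies $\widetilde{\Sigma}(\delta_2,\delta_1)^*$ with $\SR^+(\check{\delta}_1)\boxtimes_{\check{\delta}_1^{-1}\check{\delta}_2}\RP^1$ up to the appropriate normalization) to a surjection $D_\rig^\natural(\check{s})\boxtimes\RP^1\twoheadrightarrow\SR^+(\check{\delta}_1)\boxtimes\RP^1$; the key point is that $D_\rig^\natural(s)\boxtimes\RP^1$ and $D_\rig^\natural(\check{s})\boxtimes\RP^1$ are orthogonal complements, so the image of $D_\rig^\natural(\check{s})$ in $\SR(\check{\delta}_1)\boxtimes\RP^1$ is precisely the orthogonal complement inside $\SR(\check{\delta}_1)\boxtimes\RP^1$ of the image of $D_\rig^\natural(s)\cap\SR(\check{\delta}_2)\boxtimes\RP^1$ under the pairing restricted from~(\ref{eq:exact-intro}). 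When $\delta_1\delta_2^{-1}\neq x^k|x|$ this forces the clean exact sequence~(\ref{eq:analytic-exact-intro})
\begin{equation*}
0\longrightarrow\SR^+(\check{\delta}_2)\boxtimes\RP^1\longrightarrow D_\rig^\natural(\check{s})\boxtimes\RP^1\longrightarrow\SR^+(\check{\delta}_1)\boxtimes\RP^1\longrightarrow 0,
\end{equation*}
and dualizing back (again via Proposition~\ref{prop:map-A}) gives~(\ref{eq:exact-Emerton}) with $\Sigma(s)=\widetilde{\Sigma}(\delta_1,\delta_2)$, matching~(\ref{eq:Sigma(s)}).

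In the remaining case $\delta_1\delta_2^{-1}=x^k|x|$, equivalently $\delta_s=x^{k-1}$ and $w(s)=k$, the two inclusions $\SR^+(\check{\delta}_i)\boxtimes\RP^1\hookrightarrow(\text{image of }D_\rig^\natural(\check{s}))$ are no longer equalities: one checks, using Colmez's explicit basis of $H^1(\SR(x^{k-1}|x|))$ from~\cite{Colmez08} and the degeneration of the differential map $I_k$, that the image of $D_\rig^\natural(\check{s})$ in $\SR(\check{\delta}_1)\boxtimes\RP^1$ is a closed subspace of $\SR^+(\check{\delta}_1)\boxtimes\RP^1$ of codimension $k$, and dually $\SR(\check{\delta}_2)\boxtimes\RP^1\cap D_\rig^\natural(\check{s})\boxtimes\RP^1$ contains $\SR^+(\check{\delta}_2)\boxtimes\RP^1$ with codimension $k$. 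Dualizing, $\Pi(s)_\an/\iota_s(\widetilde{\Sigma}(\delta_1,\delta_2))$ acquires a $k$-dimensional subrepresentation and a $k$-dimensional quotient; comparing with the Jordan--Hölder data of Proposition~\ref{prop:ST-an} and the structure of $\Sigma(s)$ recorded in~(\ref{eq:exact-sq-Sigma(s)}) (in particular that $(\delta_2\circ\det)\otimes\Sym^{k-1}L^2\otimes\St$ is the unique irreducible sub and $(\delta_2\circ\det)\otimes\Sym^{k-1}L^2$ the unique irreducible quotient of $\Sigma(s)$), one pins down the extension and concludes~(\ref{eq:exact-Emerton}). I expect the main obstacle to be precisely this last bookkeeping: verifying that the codimension-$k$ pieces on the $(\varphi,\Gamma)$-module side correspond, after duality, to the \emph{correct} Jordan--Hölder constituents of $\Sigma(s)$ and $\widetilde{\Sigma}(\delta_2,\delta_1)$ rather than some other extension with the same composition factors, which requires carefully tracking Breuil's geometric model $C(x^{k-1},\SL)$ through the pairing and matching it against $\SA_s$.
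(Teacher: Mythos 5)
Your global strategy (the exact sequence $0\to\SR(\delta_1)\boxtimes\RP^1\to D(s)\boxtimes\RP^1\to\SR(\delta_2)\boxtimes\RP^1\to 0$, its dual, the orthogonality from Proposition~\ref{prop:Drig-orthogonal}, and the identification $\SR^+(\eta)\boxtimes_\delta\RP^1\cong\widetilde\Sigma(\eta^{-1}\epsilon,\delta^{-1}\eta)^*$ of Proposition~\ref{prop:map-A}) is exactly the paper's, and the generic case $\delta_1\delta_2^{-1}\neq x^k|x|$ goes through cleanly as you describe. There are, however, two places where your proposal deviates from the paper or leaves a real gap.

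First, you propose to obtain the codimension-$k$ statements in the special case $\delta_s=x^{k-1}$ by a computation with Colmez's basis of $H^1(\SR(x^{k-1}|x|))$ and the degeneration of $I_k$. This is not how the paper does it and is harder than necessary. The paper instead proves (Proposition~\ref{prop-comm-diag}) that the diagram relating $\SA_{s,\an}$ to $j_{\RP^1}$ and $\iota_{\check s}^*$ commutes, from which $j_{\RP^1}(D^\natural_\rig(s)\boxtimes\RP^1)=\SA_2(\Sigma(\check\delta_2,\check\delta_1)^*)$; the codimension-$k$ assertion then falls out immediately because $\Sigma(\check\delta_2,\check\delta_1)$ is by \emph{definition}~(\ref{eq:Sigma}) the quotient of $\widetilde\Sigma(\check\delta_2,\check\delta_1)$ by the $k$-dimensional $(\check\delta_1\circ\det)\otimes\Sym^{k-1}L^2$. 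The dual codimension-$k$ statement is then formal from Lemmas~\ref{lem:SR+orthogonal} and~\ref{lem:orthogonal}, which require an open-mapping/Hahn–Banach argument to upgrade one containment to equality; your sketch does not acknowledge this point.

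Second, and more seriously, the final bookkeeping step that you flag as ``the main obstacle'' is not resolved in your proposal, and it cannot be resolved by comparing composition factors or by tracking the geometric model $C(x^{k-1},\SL)$ abstractly through the pairing. Knowing that $\Pi(s)_\an/\Sigma(\delta_1,\delta_2)$ has the ``right'' Jordan--Hölder pieces does not pin down the extension, exactly as you worry. The missing idea in the paper is that one already has a specific $\GL_2(\BQ_p)$-equivariant injection $\iota_s:\Sigma(s)\hookrightarrow\Pi(s)_\an$ (Proposition~\ref{prop:injn-iotas}, which is Breuil's map, shown injective in \cite{Breuil-DS}, \cite{Breuil-com}). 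With this in hand the argument in Theorem~\ref{thm:emerton} is short: since $\widetilde\Sigma(\delta_2,\delta_1)$ is topologically irreducible and does not occur as a subquotient of $\Sigma(s)$ (Proposition~\ref{prop:ST-an}), the composite $\Sigma(s)\to\Pi(s)_\an\to\widetilde\Sigma(\delta_2,\delta_1)$ vanishes, so $\iota_s(\Sigma(s))$ lands inside the kernel $\Sigma$ of the dualized orthogonality exact sequence; then one observes that both $\iota_s(\Sigma(s))$ and $\Sigma$ contain $\Sigma(\delta_1,\delta_2)$ as a closed subrepresentation of codimension exactly $k$, forcing $\iota_s(\Sigma(s))=\Sigma$. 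Without invoking the existence and injectivity of $\iota_s$ on all of $\Sigma(s)$, you have no way to compare the abstract extension $\Sigma$ coming from the $(\varphi,\Gamma)$-module side with Breuil's $\Sigma(k+1,\SL)$-twist, and the proof does not close.
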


\begin{rem}\label{non-split}
Emerton shows that if the above conjecture is true, then the extension (\ref{eq:exact-Emerton}) must be non-split (\cite{Emerton-lg}).
\end{rem}
In the case when $s\in\SS_*^{\mathrm{cris}}$, there is a more explicit description of $\Pi(s)_{\mathrm{an}}$ which is due to Breuil. Recall that $D(s)$ is isomorphic to $D(s')$ for $s'=(x^{w(s)}\delta_2,x^{-w(s)}\delta_1,\SL)$. We thus obtain a morphism $\Sigma((x^{w(s)}\delta_2,x^{-w(s)}\delta_1)\ra\Pi(s')_{\mathrm{an}}\cong\Pi(s)_{\mathrm{an}}$. On the other hand, if $\alpha,\beta:\BQ_p^\times\ra L^\times$ are smooth characters such that $|\alpha(p)|\leq|\beta(p)|$, there is an intertwining from $\Ind^{\GL_2(\BQ_p)}_{\mathrm{B}(\BQ_p)}(\alpha\otimes\beta|x|^{-1})_{\mathrm{sm}}$
to $\Ind^{\GL_2(\BQ_p)}_{\mathrm{B}(\BQ_p)}(\beta\otimes\alpha|x|^{-1})_{\mathrm{sm}}$, yielding an intertwining from $\Sym^{k-1}L^2\otimes\Ind^{\GL_2(\BQ_p)}_{\mathrm{B}(\BQ_p)}(\alpha\otimes\beta|x|^{-1})_{\mathrm{sm}}$
to $\Sym^{k-1}L^2\otimes\Ind^{\GL_2(\BQ_p)}_{\mathrm{B}(\BQ_p)}(\beta\otimes\alpha|x|^{-1})_{\mathrm{sm}}$. It thus follows that if we set $\Sigma(\delta_1, \delta_2)_{\mathrm{lalg}}$ to be the image of $\ker I_k$ in $\Sigma(\delta_1, \delta_2)$, then there exists an intertwining between
$\Sigma(\delta_1,\delta_2)_{\mathrm{lalg}}$ and
$\Sigma(x^{w(s)}\delta_2,x^{-w(s)}\delta_1)_{\mathrm{lalg}}$ which is always injective (but the direction can be either way).
We therefore get a morphism
\begin{equation} \label{eq:breuil}
\Sigma(\delta_1,\delta_2) \ \widetilde{\oplus} \  \Sigma(x^{w(s)}\delta_2,x^{-w(s)}\delta_1)
\ra\Pi(s)_\an
\end{equation}
where $\widetilde{\oplus}$ denotes
the amalgamated sum of two summands over the intertwining between $\Sigma(\delta_1,\delta_2)_{\mathrm{lalg}}$ and
$\Sigma(x^{w(s)}\delta_2,x^{-w(s)}\delta_1)_{\mathrm{lalg}}$.
\begin{conj}$($\cite[Conjectures 4.4.1, 5.3.7]{BB}$)$\label{conj:breuil}
For $s\in\SS^\cris_*$ non-exceptional,
(\ref{eq:breuil}) is a topological isomorphism.
\end{conj}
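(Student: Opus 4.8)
The plan is to deduce Conjecture~\ref{conj:breuil} from Emerton's conjecture, Theorem~\ref{thm:emerton}, which is established independently in \S6.3; conditional on that theorem the deduction is essentially formal, in the spirit of the remark made in the introduction. Fix a non-exceptional $s=(\delta_1,\delta_2,\SL)\in\SS_*^\cris$, so $\SL=\infty$; put $k=w(s)$ and $s'=(x^k\delta_2,x^{-k}\delta_1,\infty)$. By Proposition~\ref{prop:class-rank2} one has $V(s')\cong V(s)$, hence $D(s')\cong D(s)$ as $(\varphi,\Gamma)$-modules, and Theorem~\ref{thm:Amice-rank2} furnishes a $\GL_2(\BQ_p)$-equivariant topological isomorphism $j:\Pi(s')\xrightarrow{\sim}\Pi(s)$ which restricts to one on locally analytic vectors. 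Applying Theorem~\ref{thm:emerton} to $s$ and to $s'$ and transporting the second along $j$ yields two short exact sequences of admissible locally analytic $\GL_2(\BQ_p)$-representations with common middle term $\Pi(s)_\an$:
\begin{align*}
0&\longrightarrow\Sigma(s)\xrightarrow{\ \iota_s\ }\Pi(s)_\an\xrightarrow{\ q\ }\widetilde\Sigma(\delta_2,\delta_1)\longrightarrow 0,\\
0&\longrightarrow j\iota_{s'}(\Sigma(s'))\longrightarrow\Pi(s)_\an\xrightarrow{\ \widetilde q'\ }\widetilde\Sigma(x^{-k}\delta_1,x^k\delta_2)\longrightarrow 0.
\end{align*}

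I would first check that the morphism~(\ref{eq:breuil}) is precisely the map carrying the summand $\Sigma(\delta_1,\delta_2)$ to $\iota_s(\Sigma(\delta_1,\delta_2))\subset\Pi(s)_\an$ and the summand $\Sigma(x^k\delta_2,x^{-k}\delta_1)$ to $j\iota_{s'}(\Sigma(x^k\delta_2,x^{-k}\delta_1))\subset\Pi(s)_\an$, glued over the intertwining of their locally algebraic parts. This is forced by the construction recalled in \S6.1 once one observes that both maps restrict, on those locally algebraic parts, to the inclusion of the canonical space $\Pi(s)_{\mathrm{lalg}}$ of locally algebraic vectors of $\Pi(s)$ --- which depends only on the Weil--Deligne representation and Hodge--Tate weights of $V(s)=V(s')$ --- so that the amalgamation relation is respected and~(\ref{eq:breuil}) is well defined.

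To prove that~(\ref{eq:breuil}) is a topological isomorphism I would argue on Jordan--H\"older constituents. For injectivity: a direct computation using the two exact sequences, Proposition~\ref{prop:ST-an} and (in the degenerate cases) the geometric model of \S6.1 shows that $\iota_s(\Sigma(\delta_1,\delta_2))\cap j\iota_{s'}(\Sigma(x^k\delta_2,x^{-k}\delta_1))$ inside $\Pi(s)_\an$ is exactly the image of the common locally algebraic part over which~(\ref{eq:breuil}) is amalgamated --- the key point being that the two ``large'' constituents $\widetilde\Sigma(x^{-k}\delta_1,x^k\delta_2)$ and $\widetilde\Sigma(\delta_2,\delta_1)$ are distinct irreducibles, each distinct from every constituent of the locally algebraic parts --- and then a short diagram chase with the amalgamation relation gives that~(\ref{eq:breuil}) is injective. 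For surjectivity it suffices to compare lengths: using the exact sequences above together with Proposition~\ref{prop:ST-an}, one verifies, case by case according to whether $\delta_s$ or $\delta_{s'}$ equals $x^{k-1}$ (equivalently, whether the locally algebraic parts are reducible), that the Jordan--H\"older constituents of the source of~(\ref{eq:breuil}) coincide, with multiplicities, with those of $\Pi(s)_\an$. Since the category of admissible locally analytic $\GL_2(\BQ_p)$-representations is abelian of finite length, an injective morphism whose source and target have equal length has zero cokernel, so~(\ref{eq:breuil}) is an isomorphism in that category, i.e.\ a topological isomorphism.

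Granting Theorem~\ref{thm:emerton}, the step I expect to require the most care is the Jordan--H\"older bookkeeping, and within it the injectivity computation that the image of one summand of~(\ref{eq:breuil}) meets the image of the other exactly in the amalgamated locally algebraic part: this is what actually pins $\Pi(s)_\an$ down as the amalgamated sum~(\ref{eq:breuil}), rather than merely as \emph{some} extension of $\widetilde\Sigma(\delta_2,\delta_1)$ by $\Sigma(s)$. The bookkeeping is genuinely delicate in the degenerate situations --- when $\delta_s=x^{k-1}$, so that $\Sigma(s)$ is Breuil's representation $\Sigma(k+1,\infty)\otimes(\det\text{-twist})$ and carries an extra finite-dimensional constituent, or, by the asymmetry of $s$ and $s'$, when instead $\delta_{s'}=x^{k-1}$ and one of the locally algebraic parts becomes reducible --- since then the structures of $\Sigma(s)$ and $\Sigma(s')$ differ and one must fall back on the geometric model of \S6.1 to confirm that the constituents still match on the nose. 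The only non-elementary external input is the foundational fact (Schneider--Teitelbaum) that admissible locally analytic representations form an abelian category of finite length.
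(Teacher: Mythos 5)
Your approach is correct in outline but takes a genuinely different route from the paper. In the paper, Conjecture~\ref{conj:breuil} is obtained by combining Theorem~\ref{thm:emerton} with Proposition~\ref{prop:Emerton=Breuil}, whose proof is a constituent comparison that \emph{outsources} the injectivity of~(\ref{eq:breuil}): in the ``generic'' case ($\delta_s\neq x^{w(s)-1},x^{w(s)-1}|x|^{-2}$) the entire equivalence is cited from Emerton [6.7.5], and in the remaining cases injectivity is cited from \cite[Corollaires 5.3.6, 5.4.3]{BB}, after which the Jordan--H\"older constituents of the source and of $\Sigma(s)\oplus\widetilde\Sigma(\delta_2,\delta_1)$ are matched. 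You instead try to produce the injectivity internally, by showing that $\iota_s(\Sigma(\delta_1,\delta_2))\cap j\iota_{s'}(\Sigma(x^k\delta_2,x^{-k}\delta_1))$ is exactly the image of the amalgamated locally algebraic part. That is a more self-contained line of argument, and it does work: since the two irreducible quotients $\widetilde\Sigma(\delta_2,\delta_1)\cong\Pi(s)_\an/A$ and $\widetilde\Sigma(x^{-k}\delta_1,x^k\delta_2)\cong\Pi(s)_\an/B$ are non-isomorphic precisely because $s$ is non-exceptional, $\Pi(s)_\an/(A+B)$ is a common quotient and hence zero, so $A+B=\Pi(s)_\an$; then $\leng(A\cap B)=\leng A+\leng B-\leng\Pi(s)_\an$, which in every case (generic, $\delta_s=x^{k-1}$, $\delta_s=x^{k-1}|x|^{-2}$) equals the length of the amalgamated part, and Proposition~\ref{prop:injn-iotas} identifies that part with a subspace of $A\cap B$ of the same length, forcing equality. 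One thing you should say out loud: it is $A+B=\Pi(s)_\an$, and \emph{not} merely the pairwise distinctness of constituents, that drives the intersection computation; as written, ``a direct computation shows'' hides this, and the reader cannot tell whether you have handled the possibility $A\subset B$ (ruled out because $\widetilde\Sigma(x^{-k}\delta_1,x^k\delta_2)$ is not a constituent of $B$, again by non-exceptionality). You should also note that in the degenerate cases the summand of~(\ref{eq:breuil}) is $\Sigma(\delta_1,\delta_2)$ rather than $\Sigma(s)$, so the intersection you must compute is not literally the one appearing in the two Emerton exact sequences --- you need to pass from $\iota_s(\Sigma(s))$ to its subrepresentation $\iota_s(\Sigma(\delta_1,\delta_2))$ before counting. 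Modulo spelling those two points out, your proof is correct; what it buys is independence from the injectivity results of \cite{BB} and from Emerton's own equivalence argument, at the cost of a slightly longer length bookkeeping. Note finally that the paper's Proposition~\ref{prop:Emerton=Breuil} establishes a two-sided equivalence, whereas for the conjecture as stated you only need the direction Emerton~$\Rightarrow$~Breuil, which is what you prove.
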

\begin{prop}\label{prop:Emerton=Breuil}
For $s\in\SS^\cris_*$ non-exceptional, Emerton's conjecture is equivalent to Breuil's conjecture.
\end{prop}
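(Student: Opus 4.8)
The plan is to compare the two descriptions of $\Pi(s)_\an$ by means of the companion parameter $s'=(\delta_1',\delta_2',\SL)$ with $\delta_1'=x^{w(s)}\delta_2$ and $\delta_2'=x^{-w(s)}\delta_1$, together with the amalgamated‑sum morphism (\ref{eq:breuil}). By Proposition~\ref{prop:class-rank2} one has $s'\in\SS^\cris_*$, $s'$ is non‑exceptional exactly when $s$ is, and $V(s)\cong V(s')$; this gives a $\GL_2(\BQ_p)$‑equivariant topological isomorphism $\Pi(s)_\an\cong\Pi(s')_\an$, under which $\iota_s:\Sigma(s)\hookrightarrow\Pi(s)_\an$ and $\iota_{s'}:\Sigma(s')\hookrightarrow\Pi(s')_\an$ (both injective, unconditionally, by Proposition~\ref{prop:injn-iotas}) become two locally analytic subrepresentations of $\Pi(s)_\an$. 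The first thing I would check is that the images of $\Sigma(\delta_1,\delta_2)\subseteq\Sigma(s)$ and of $\Sigma(\delta_1',\delta_2')\subseteq\Sigma(s')$ in $\Pi(s)_\an$ meet exactly in the space of locally algebraic vectors of $\Pi(s)$; since that space is intrinsic to $V(s)$ and is preserved by both $\iota_s$ and $\iota_{s'}$, this makes (\ref{eq:breuil}), $E:=\Sigma(\delta_1,\delta_2)\,\widetilde{\oplus}\,\Sigma(\delta_1',\delta_2')\to\Pi(s)_\an$, injective without assuming either conjecture.

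The second and main step is to prove, working entirely on the side of locally analytic representations and using the differential operator $I_k$ with $k=w(s)$ together with Proposition~\ref{prop:ST-an} and the definitions of $\Sigma(s)$, $\Sigma(\delta_1,\delta_2)$ and $\Sigma(s')$, that $E$ fits in a short exact sequence
\[
0\longrightarrow\Sigma(s)\longrightarrow E\longrightarrow\widetilde\Sigma(\delta_2,\delta_1)\longrightarrow 0
\]
whose first arrow becomes $\iota_s$ after composing with $E\hookrightarrow\Pi(s)_\an$. Concretely, inside $E$ the subrepresentation generated by $\Sigma(\delta_1,\delta_2)$ and by $\Sigma(\delta_1',\delta_2')_\lalg$ should be isomorphic to $\Sigma(s)$ (this uses the description of $\Sigma(\delta_1,\delta_2)_\lalg$, of $\Sigma(\delta_1',\delta_2')_\lalg$, and of the intertwining between them, together with the definition (\ref{eq:Sigma(s)}) of $\Sigma(s)$), while the corresponding quotient of $E$ is $\Sigma(\delta_1',\delta_2')/\Sigma(\delta_1',\delta_2')_\lalg$, which by $I_k$ and the explicit form of $\delta_1',\delta_2'$ is $\widetilde\Sigma(\delta_2,\delta_1)$. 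By symmetry, $E$ also fits in $0\to\Sigma(s')\to E\to\widetilde\Sigma(\delta_2',\delta_1')\to 0$.

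Granting these two steps, the equivalence follows. If Breuil's conjecture holds, $E\to\Pi(s)_\an$ is an isomorphism, and the exact sequence above becomes Emerton's exact sequence (\ref{eq:exact-Emerton}) for $s$. Conversely, assume Emerton's conjecture for $s$, i.e. that the cokernel of $\iota_s$ is $\widetilde\Sigma(\delta_2,\delta_1)$. The injective morphism $E\to\Pi(s)_\an$ carries the sub $\Sigma(s)\subseteq E$ isomorphically onto $\iota_s(\Sigma(s))$, hence induces an injection on quotients $\widetilde\Sigma(\delta_2,\delta_1)=E/\Sigma(s)\hookrightarrow\Pi(s)_\an/\iota_s(\Sigma(s))\cong\widetilde\Sigma(\delta_2,\delta_1)$; as $\widetilde\Sigma(\delta_2,\delta_1)$ is topologically irreducible (Proposition~\ref{prop:ST-an}(i), since $w(\delta_2\delta_1^{-1}\epsilon^{-1})=-w(s)-1<0$), this injection is an isomorphism, so $E\to\Pi(s)_\an$ is an isomorphism, which is Breuil's conjecture.

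The hard part is the bookkeeping in the second step — pinning down the submodule lattice of $E$ and the precise effect of $I_k$ so that the exact sequence is forced rather than merely plausible — and the verification in the first step that $\Sigma(\delta_1,\delta_2)$ and $\Sigma(\delta_1',\delta_2')$ overlap exactly in the locally algebraic vectors. The delicate feature common to both is that the locally algebraic constituent $\Sym^{w(s)-1}L^2\otimes\pi_{\sm}$ of $\Pi(s)_\an$, along with $\Sigma(\delta_1,\delta_2)_\lalg$, $\Sigma(\delta_1',\delta_2')_\lalg$ and the intertwining between the latter two, behaves differently according to whether $\dim_L H^1(\SR(\delta_1\delta_2^{-1}))$ equals $1$ or $2$, so this part of the argument has to be carried out in two cases.
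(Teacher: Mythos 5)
Your strategy is essentially the paper's: compare the amalgamated sum $E$ of (\ref{eq:breuil}) with what Emerton's conjecture predicts for $\Pi(s)_\an$, and transfer one isomorphism statement to the other. The paper also splits the analysis according to whether $\delta_s\in\{x^{w(s)-1},x^{w(s)-1}|x|^{-2}\}$ or not (it outsources the generic case to \cite[6.7.5]{Emerton-lg} and handles the two degenerate cases by hand). The deduction at the end — using that $\widetilde\Sigma(\delta_2,\delta_1)$ is topologically irreducible and that injective morphisms of coadmissible modules have closed image — is sound.

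There are, however, two gaps, both of which you flag but do not close, and one of which is a genuine omission relative to the paper. First, the injectivity of $E\to\Pi(s)_\an$ is \emph{not} a formal consequence of the injectivity of $\iota_s$ and $\iota_{s'}$ from Proposition~\ref{prop:injn-iotas}: you still need that the two subrepresentations of $\Pi(s)_\an$ overlap in exactly the locally algebraic vectors, \emph{and} that the intertwining defining $\widetilde\oplus$ is the one induced by the identification $\Pi(s')_\an\cong\Pi(s)_\an$ (which is canonical only up to scalar). The paper does not reprove this; it cites \cite[Corollaires 5.3.6, 5.4.3]{BB}, and without that input your first step has no content. Second, you aim for the sharper intermediate statement that $E$ sits in a short exact sequence $0\to\Sigma(s)\to E\to\widetilde\Sigma(\delta_2,\delta_1)\to 0$ with the left arrow compatible with $\iota_s$. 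In the two easy cases this falls out directly, but in the case $\delta_s=x^{w(s)-1}$ the preimage in $E$ of the finite-dimensional piece $(\delta_2\circ\det)\otimes\Sym^{k-1}L^2$ is \emph{some} extension of that piece by $\Sigma(\delta_1,\delta_2)$, and identifying it with the particular extension $\Sigma(s)$ of (\ref{eq:exact-sq-Sigma(s)}) requires knowing the relevant $\mathrm{Ext}^1$ group, which you do not compute. The paper sidesteps this entirely: it only compares Jordan--H\"older constituents of $E$ and of $\Sigma(s)\oplus\widetilde\Sigma(\delta_2,\delta_1)$, which suffices because both embed (respectively, as a sub and as a sub-plus-irreducible-cokernel) into the admissible representation $\Pi(s)_\an$, and this weaker comparison requires no extension-class bookkeeping. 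So your write-up is a plan in the paper's spirit, but the two steps you call ``hard'' are precisely where all the content lies, and for the first one the relevant reference \cite{BB} is what actually does the work.
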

\begin{proof}
The generic case that $V(s)$ does not admit an $\SL$-invariant (this is equivalent to $\delta_s\neq
x^{w(s)-1}, x^{w(s)-1}|x|^{-2}$) is already proved in
\cite[6.7.5]{Emerton-lg}. We now prove the remaining cases. The injectivity of (\ref{eq:breuil}) is already ensured by \cite[Corollaires 5.3.6, 5.4.3]{BB}. It reduces to show that
$\Sigma(\delta_1,\delta_2) \ \widetilde{\oplus} \  \Sigma(x^{w(s)}\delta_2,x^{-w(s)}\delta_1)$
and $\Sigma(s)\oplus \widetilde{\Sigma}(\delta_2,\delta_1)$ have same constitutes.
If $\delta_s=x^{w(s)-1}|x|^{-2}$, then $\Sigma(s)=\Sigma(\delta_1,\delta_2)$
and the intertwining is $\Sigma(x^{w(s)}\delta_2,x^{-w(s)}\delta_1)_{\mathrm{lalg}}\ra\Sigma(\delta_1,\delta_2)_{\mathrm{lalg}}$.
Therefore
$$\left(\Sigma(\delta_1,\delta_2) \ \widetilde{\oplus} \
\Sigma(x^{w(s)}\delta_2,x^{-w(s)}\delta_1)\right)/\Sigma(s)
=\Sigma(x^{w(s)}\delta_2,x^{-w(s)}\delta_1)/\Sigma(x^{w(s)}\delta_2,x^{-w(s)}\delta_1)_{\mathrm{lalg}}\cong
\widetilde{\Sigma}(\delta_2,\delta_1)$$
by Proposition \ref{prop:ST-an}. If $\delta_s=x^{w(s)-1}$, the intertwining is $\Sigma(\delta_1,\delta_2)_{\mathrm{lalg}}\ra\Sigma(x^{w(s)}\delta_2,x^{-w(s)}\delta_1)_{\mathrm{lalg}}$ and the quotient is isomorphic to $(\delta_2\circ\det)\otimes\Sym^{w(s)-1}L^2$. Thus
$$\left(\Sigma(\delta_1,\delta_2) \ \widetilde{\oplus} \
\Sigma(x^{w(s)}\delta_2,x^{-w(s)}\delta_1)\right)/\Sigma(\delta_1,\delta_2)$$
is an extension of $\Sigma(x^{w(s)}\delta_2,x^{-w(s)}\delta_1)/\Sigma(x^{w(s)}\delta_2,x^{-w(s)}\delta_1)_{\mathrm{lalg}}
\cong\widetilde{\Sigma}(\delta_2,\delta_1)$ by
$(\delta_2\otimes\det)\otimes \Sym^{w(s)-1}L^2$. We thus obtain that $\Sigma(\delta_1,\delta_2) \ \widetilde{\oplus} \  \Sigma(x^{w(s)}\delta_2,x^{-w(s)}\delta_1)$ has same constitutes with $\Sigma(s)\oplus \widetilde{\Sigma}(\delta_2,\delta_1)$ in both cases.
\end{proof}

\subsection{An exact sequence}

From now on, we suppose $p>2$. Recall that for any $s=(\delta_1,\delta_2,\SL)\in \SS_\irr$, there is an exact sequence
\[
0\longrightarrow\SR(\delta_1)\stackrel{i}{\longrightarrow} D(s)\stackrel{j}{\longrightarrow}\SR(\delta_2)\longrightarrow0.
\]
For $i=1,2$,
we denote $\mathscr{A}_{\delta_s,\delta_i}, \SR^+(\delta_i)\boxtimes_{\delta_s}\RP^1, \SR(\delta_i)\boxtimes_{\delta_s}\RP^1$ by $\mathscr{A}_i,\SR^+(\delta_i)\boxtimes\RP^1,\SR(\delta_i)\boxtimes\RP^1$ for simplicity. Recall that $\mathscr{A}_s:\Pi(\check{s})^*\ra D^\natural(s)\boxtimes\RP^1$ is the topological $\GL_2(\BQ_p)$-equivariant isomorphism given by Theorem \ref{thm:Amice-rank2}.

\begin{prop}\label{prop:exact-sq-left}
If $s$ is non-exceptional, then the $\GL_2(\BQ_p)$-equivariant morphism
\[
\mathscr{A}_2\circ\iota_{\check{s}}^*\circ\mathscr{A}_s^{-1}:D^\natural(s)\boxtimes \RP^1
\ra \SR^+(\delta_2) \boxtimes\RP^1
\]
satisfies $\mathscr{A}_2\circ\iota_{\check{s}}^*\circ\mathscr{A}_s^{-1}((z,z'))=(j(z),j(z'))$ for any $(z,z')\in D^\natural(s)\boxtimes \RP^1$.
\end{prop}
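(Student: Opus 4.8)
The plan is to show that the composite $\mathscr{A}_2\circ\iota_{\check s}^*\circ\mathscr{A}_s^{-1}$ agrees with the map $(z,z')\mapsto(j(z),j(z'))$ by unwinding the definitions of all four arrows and tracing an element through the diagram. The only subtle point is that $\iota_{\check s}^*$ is the transpose of the continuous $\GL_2(\BQ_p)$-equivariant inclusion $\iota_{\check s}:\widetilde\Sigma(\check\delta_2,\check\delta_1)\hookrightarrow\Pi(\check s)$, so that $\iota_{\check s}^*:\Pi(\check s)^*\to\widetilde\Sigma(\check\delta_2,\check\delta_1)^*$, and after applying $\mathscr{A}_{\delta_s,\delta_2}$ (Proposition \ref{prop:map-A}) the target becomes $\SR^+(\delta_2)\boxtimes_{\delta_s}\RP^1$. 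Here one must check that the characters match up: by definition $\mathscr{A}_{\delta,\eta}$ identifies $\widetilde\Sigma(\eta^{-1}\epsilon,\delta^{-1}\eta)^*$ with $\SR^+(\eta)\boxtimes_\delta\RP^1$, and with $\eta=\delta_2$, $\delta=\delta_s=\delta_1\delta_2^{-1}\epsilon^{-1}$ one computes $\eta^{-1}\epsilon=\delta_2^{-1}\epsilon=\check\delta_1\cdot(\delta_1^{-1}\delta_2)\cdot\ldots$; more cleanly, $\delta^{-1}\eta=\delta_s^{-1}\delta_2=\delta_1^{-1}\delta_2\epsilon\cdot\delta_2=\epsilon\delta_1^{-1}\delta_2^2$, which is not obviously $\check\delta_1=\epsilon\delta_2^{-1}$—so the right bookkeeping (cf.\ the relation $\check s=(\check\delta_2,\check\delta_1,\SL)$ and $\delta_{\check s}=\check\delta_1\check\delta_2^{-1}\epsilon^{-1}$) must be done carefully, and this character-matching is where I expect the main friction to lie.

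Concretely I would proceed as follows. First, recall from Theorem \ref{thm:Amice-rank2} that $\mathscr{A}_s$ sends $\mu\in\Pi(\check s)^*$ to the element $z\in D^\natural(s)\boxtimes\RP^1$ characterized by $\mathscr{A}^{(n)}(\mu)=(\delta_2(p))^{-n}z_2^{(n)}$, where $z_2^{(n)}e_2$ is the image of $\Res_{\BZ_p}(\wvec{p^n}{0}{0}{1}z)$ under $j$. Thus, \emph{by construction}, the second coordinate $z_2^{(n)}$ of $\mathscr{A}_s(\mu)$ is literally read off from the sequence of distributions $\mu^{(n)}=\mu|_{p^{-n}\BZ_p}(p^n\,\cdot\,)$ via the Amice transform. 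On the other hand, $\iota_{\check s}^*(\mu)$ is just the restriction of $\mu\in\Pi(\check s)^*$ to the dense subspace $\widetilde\Sigma(\check\delta_2,\check\delta_1)\subset\Pi(\check s)$ — so as a distribution on $\RP^1$ it \emph{is} $\mu$. Then $\mathscr{A}_2$ applied to $\iota_{\check s}^*(\mu)$ produces, in its $\Res_{\BZ_p}$-component, precisely $\mathscr{A}(\iota_{\check s}^*(\mu)|_{\BZ_p})\otimes e_{\delta_2}$, and in its $w$-component $\mathscr{A}(w\,\iota_{\check s}^*(\mu)|_{\BZ_p})\otimes e_{\delta_2}$. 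So both $\mathscr{A}_s^{-1}$-then-$j$ and $\iota_{\check s}^*$-then-$\mathscr{A}_2$ express the target through the same Amice transforms of the same distribution $\mu$; the content is that these two routes give the same answer.

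The heart of the argument is therefore the identity: for $z=\mathscr{A}_s(\mu)$, one has $j(\Res_{\BZ_p}(z)) = \mathscr{A}(\mu|_{\BZ_p})\otimes e_{\delta_2}$ and, under $w_{\delta_s}$, $j(\Res_{\BZ_p}(wz))=\mathscr{A}(w\mu|_{\BZ_p})\otimes e_{\delta_2}$ — i.e.\ the map $j$ on $D(s)\boxtimes\RP^1$ is intertwined with the Amice transform. The $n=0$ case of the first identity is immediate from the normalization $\mathscr{A}^{(0)}(\mu)=z_2^{(0)}$; the general $n\geq 1$ case follows because $j$ commutes with $\Res_{\BZ_p}\wvec{p^n}{0}{0}{1}$ and with $\varphi,\psi$, and because $\mathscr{A}$ is $\varphi,\psi$-equivariant on distributions on $\BQ_p$, giving $\mathscr{A}(\mu^{(n)})=\mathscr{A}(\psi^n\mu^{(0)\vee})=\ldots$; the factor $(\delta_2(p))^{-n}$ in Theorem \ref{thm:Amice-rank2} is exactly compensated by the $\varphi$-eigenvalue $\delta_2(p)$ of $e_2$. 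For the $w$-component one invokes Lemma \ref{lem:w=wdelta} (which says $\mathscr{A}(w\mu|_{\BZ_p^\times})\otimes e_\eta = w_\delta(\mathscr{A}(\mu|_{\BZ_p^\times})\otimes e_\eta)$) together with the fact that $j$ is $w_{\delta_s}$-equivariant on $\BZ_p^\times$-components (this uses that $w_{\delta_s}$ on $\SR(\delta_2)\boxtimes\BZ_p^\times$ is compatible, via $j$, with $w_{D(s)}$ on $D(s)\boxtimes\BZ_p^\times$, which in turn follows from the formula $(\ref{eq:wD})$ being functorial in the $(\varphi,\Gamma)$-module). Once both components match, the two maps $D^\natural(s)\boxtimes\RP^1\to\SR^+(\delta_2)\boxtimes\RP^1$ agree on the dense image of $\mathscr{A}_s$, hence everywhere; and since all maps involved are $\GL_2(\BQ_p)$-equivariant, the resulting map is $(z,z')\mapsto(j(z),j(z'))$ as claimed. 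The main obstacle, as flagged, is keeping the twisting characters and the $\delta_i(p)$-normalization factors straight throughout — there are no deep ideas here, only the danger of an off-by-a-twist error, which I would guard against by checking the central-character compatibility ($\delta_s$ on both sides) as a sanity check.
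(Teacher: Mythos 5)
Your overall strategy—unwind the definitions of all four arrows and read off the answer via the Amice transform—is the same as the paper's, and your first component works: $\mathscr{A}(\mu|_{\BZ_p})=z_2^{(0)}$ is literally the $n=0$ normalization in Theorem~\ref{thm:Amice-rank2}, which is all one needs (your discussion of the ``$n\geq 1$ case'' is a red herring; the first identity has no $n$ in it). You also correctly flag a genuine friction point in the character bookkeeping: with the paper's stated definition of $\delta_s$, the source of $\mathscr{A}_{\delta_s,\delta_2}$ does not literally equal $\widetilde\Sigma(\check\delta_1,\check\delta_2)^*$ unless $\delta_2^2=1$, which strongly suggests the subscript in $\mathscr{A}_{\delta_s,\delta_i}$ should be $\delta_{D(s)}=\delta_1\delta_2\epsilon^{-1}$ rather than $\delta_s=\delta_1\delta_2^{-1}\epsilon^{-1}$; the paper silently glosses this.

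However, your treatment of the $w$-component is circular. You invoke ``the fact that $j$ is $w_{\delta_s}$-equivariant on $\BZ_p^\times$-components,'' and justify it by saying that formula~(\ref{eq:wD}) is ``functorial in the $(\varphi,\Gamma)$-module.'' That justification is wrong: as the paper points out just before proving Proposition~\ref{prop:exact-sq}, formula~(\ref{eq:wD}) is in general \emph{not} convergent on $D^\dagger\boxtimes\BZ_p^\times$ or on $\SR(\delta_2)\boxtimes\BZ_p^\times$—the $w$-action on those spaces is defined by a density-and-antilinearity extension, not by the formula—so one cannot read off the compatibility of $j$ with $w$ from the formula itself. Worse, the statement you want ($j(w_D(x))=w_{\delta_s}(j(x))$) is precisely Corollary~\ref{prop:exact-sq-right}, whose proof in the paper \emph{uses} the present Proposition~\ref{prop:exact-sq-left}. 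So your argument is running in a circle. The fix is simple and is what the paper does: one never needs the $w$-compatibility of $j$. Instead, apply the $\GL_2(\BQ_p)$-equivariance of $\mathscr{A}_s$ (part of Theorem~\ref{thm:Amice-rank2}) directly: $\mathscr{A}_s(w\mu)=w\mathscr{A}_s(\mu)=w(z,z')=(z',z)$, so by the same $n=0$ normalization applied to the element $(z',z)$, $\mathscr{A}(w\mu|_{\BZ_p})$ equals the coefficient of $e_2$ in $j(z')$, which is $z_2'$. With both components handled this way, the two routes literally coincide as pairs $(z_2\otimes e_2, z_2'\otimes e_2)=(j(z),j(z'))$, and no appeal to Lemma~\ref{lem:w=wdelta} or to the $w$-compatibility of $j$ is needed.
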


\begin{proof}
Suppose the images of $z,z'$ in $\SR(\delta_2)$ are $z_2e_2,z_2'e_2$ where $e_2$ is the standard basis of $\SR(\delta_2)$ fixed in $\S4.2$. Suppose $\mathscr{A}_s^{-1}((z,z'))=\mu$. Then
by the definition of $\mathscr{A}_s$, we see that $\mathscr{A}(\mu|_{\BZ_p})=z_2, \mathscr{A}(w\mu|_{\BZ_p})=z'_2$. Hence
$\mathscr{A}_2\circ\iota_s^*\circ\mathscr{A}_s^{-1}((z,z'))=
(z_2\otimes e_2,z_2'\otimes e_2)=(j(z),j(z'))$.
\end{proof}

\begin{cor} \label{prop:exact-sq-right}
If $s$ is non-exceptional, then $j(w_D(x))=w_{\delta_s}(j(x))$ for any $x\in D(s)\boxtimes\BZ_p^\times$.
\end{cor}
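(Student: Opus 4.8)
The plan is to read the identity off Proposition~\ref{prop:exact-sq-left} by unwinding, on the source and on the target, the gluing condition that defines $\boxtimes\RP^1$. Two preliminary observations are needed. First, since $j\colon D(s)\to\SR(\delta_2)$ is a morphism of $(\varphi,\Gamma)$-modules over $\SR$, it is $\SR$-linear and commutes with $\varphi$ and with the $\Gamma$-action; writing $y=\sum_{i=0}^{p-1}(1+T)^i\varphi(y_i)$ and applying $j$ shows that $j$ commutes with $\psi$ as well, hence with $\Res_{\BZ_p^\times}=1-\varphi\psi$. Second, by definition $(z,z')\in D(s)\boxtimes\RP^1=D(s)\boxtimes_{\delta_D}\RP^1$ means precisely $\Res_{\BZ_p^\times}(z')=w_D(\Res_{\BZ_p^\times}(z))$, whereas $(y,y')\in\SR(\delta_2)\boxtimes_{\delta_s}\RP^1$ means $\Res_{\BZ_p^\times}(y')=w_{\delta_s}(\Res_{\BZ_p^\times}(y))$.

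Now I would take any $(z,z')\in D^\natural(s)\boxtimes\RP^1$. By Proposition~\ref{prop:exact-sq-left} the pair $(j(z),j(z'))$ lies in $\SR^+(\delta_2)\boxtimes\RP^1\subset\SR(\delta_2)\boxtimes_{\delta_s}\RP^1$, so $\Res_{\BZ_p^\times}(j(z'))=w_{\delta_s}(\Res_{\BZ_p^\times}(j(z)))$. Combining this with the gluing condition for $D(s)\boxtimes\RP^1$ and with the commutation of $j$ and $\Res_{\BZ_p^\times}$ yields
\begin{equation*}
w_{\delta_s}\bigl(j(\Res_{\BZ_p^\times}(z))\bigr)=w_{\delta_s}\bigl(\Res_{\BZ_p^\times}(j(z))\bigr)=\Res_{\BZ_p^\times}(j(z'))=j\bigl(\Res_{\BZ_p^\times}(z')\bigr)=j\bigl(w_D(\Res_{\BZ_p^\times}(z))\bigr).
\end{equation*}
Hence $j(w_D(x))=w_{\delta_s}(j(x))$ for every $x$ of the form $\Res_{\BZ_p^\times}(z)$ with $(z,z')\in D^\natural(s)\boxtimes\RP^1$. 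Using the $\GL_2(\BQ_p)$-stability of $D^\natural(s)\boxtimes\RP^1$ and the $w_D$-stability of $D^\natural(s)\boxtimes\BZ_p^\times$, one checks (by a short induction on $n$ built from Proposition~\ref{prop:GL2action}(iv)) that $(x,w_D(x))\in D^\natural(s)\boxtimes\RP^1$ for every $x\in D^\natural(s)\boxtimes\BZ_p^\times$, so in fact the identity holds on all of $D^\natural(s)\boxtimes\BZ_p^\times$.

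The remaining point — and the one I expect to be the main obstacle — is to upgrade the identity from $D^\natural(s)\boxtimes\BZ_p^\times$ to all of $D(s)\boxtimes\BZ_p^\times=D_\rig(s)\boxtimes\BZ_p^\times$. The mechanism is that $D_\rig(s)\boxtimes\BZ_p^\times$ is generated over $\SR(\Gamma)$ by $D^\natural(s)\boxtimes\BZ_p^\times$: this uses Colmez's structure theorem $D_\rig\boxtimes\BZ_p^\times=\SR(\Gamma)\otimes_{\SE^\dagger(\Gamma)}(D^\dagger\boxtimes\BZ_p^\times)$ together with the fact that the minimal treillis spans $D^\dagger(s)\boxtimes\BZ_p^\times$ over $\SE^\dagger(\Gamma)$. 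Since $j$ is $\SR(\Gamma)$-linear and both $w_D$ and $w_{\delta_s}$ satisfy the twisted $\Gamma$-equivariance of Lemma~\ref{lem:anti} (so are $\SR(\Gamma)$-semilinear for the relevant involutions, cf.\ \eqref{eq:anti2}), the identity propagates from the generating set to the whole module. Thus the formal deduction from Proposition~\ref{prop:exact-sq-left} is immediate; the genuine work is in these structural inputs — the surjectivity of $\Res_{\BZ_p^\times}$ on $D^\natural(s)\boxtimes\RP^1$ and the $\SR(\Gamma)$-generation of $D_\rig(s)\boxtimes\BZ_p^\times$ by the treillis part — and in checking that the semilinearities are compatible under the normalization of $\delta_D$ in force here.
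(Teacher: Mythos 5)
Your overall strategy matches the paper's: use Proposition~\ref{prop:exact-sq-left} to read off the identity on a generating subset of $D(s)\boxtimes\BZ_p^\times$, then propagate it to the whole module using the $\SR(\Gamma)$-antilinearity of $w_D$ and $w_{\delta_s}$ together with the $\SR(\Gamma)$-linearity of $j$. The first display in your argument is essentially the paper's computation verbatim. The divergence, and the gap, is in the reduction step.

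The paper takes its generating set to be $(1-\varphi)D(s)^{\psi=1}$. Two precise inputs from Colmez make this work: \cite[Corollaire V.1.13]{Colmez-Langlands} says $D(s)\boxtimes\BZ_p^\times$ is generated over $\SR(\Gamma)$ by $(1-\varphi)D(s)^{\psi=1}$, and the proof of \cite[Proposition V.2.1]{Colmez-Langlands} shows that each $x\in(1-\varphi)D(s)^{\psi=1}$ arises as $\Res_{\BZ_p^\times}(z_1)$ for \emph{some} $(z_1,z_2)\in D^\natural(s)\boxtimes\RP^1$ — with no claim that one can take $z_1=x$ or that the pair is $(x,w_D(x))$. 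You instead try to use $D^\natural(s)\boxtimes\BZ_p^\times$ as a generating set and assert that $(x,w_D(x))\in D^\natural(s)\boxtimes\RP^1$ for every $x\in D^\natural(s)\boxtimes\BZ_p^\times$, justified by a ``short induction built from Proposition~\ref{prop:GL2action}(iv).'' That induction actually computes
\[
\Res_{\BZ_p}\bigl(\wvec{p^n}{0}{0}{1}(x,w_D(x))\bigr)=\varphi^n(x)
\]
(since $\psi(w_D(x))=0$ forces the second component to vanish, hence $z'_1=\varphi(x)$ by the gluing condition), so membership in $D^\natural(s)\boxtimes\RP^1$ would require $\varphi^n(x)\in D^\natural(s)$ for all $n\geq0$. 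But $D^\natural$ is the minimal $\psi$-stable trellis and is not $\varphi$-stable in general, so this fails; the paper deliberately avoids the pair $(x,w_D(x))$ and only asks for a lift with $\Res_{\BZ_p^\times}(z_1)=x$. Likewise, your assertion that ``the minimal treillis spans $D^\dagger(s)\boxtimes\BZ_p^\times$ over $\SE^\dagger(\Gamma)$'' is not established anywhere in the paper and would itself need a proof; the citation the paper relies on is specifically about $(1-\varphi)D^{\psi=1}$, not about $D^\natural\boxtimes\BZ_p^\times$. In short: right skeleton, but the bridge from Proposition~\ref{prop:exact-sq-left} to all of $D(s)\boxtimes\BZ_p^\times$ rests on two unverified claims, one of which is false as stated. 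Replacing your generating set with $(1-\varphi)D(s)^{\psi=1}$ and invoking Colmez's V.1.13 and V.2.1 closes the gap.
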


\begin{proof}
If $x\in (1-\varphi)D(s)^{\psi=1}$, by the proof of \cite[Proposition V.2.1]{Colmez-Langlands}, there exists $z=(z_1,z_2)\in D^\natural(s)\boxtimes \RP^1$ such that $\Res_{\BZ_p^\times}z_1=x$.
Since $j$ commutes with $\Res_{\BZ_p^\times}$ and $(j(z_1),j(z_2))\in \SR^+(\delta_2) \boxtimes\RP^1$, we get
\[
w_{\delta_s}(j(x))=w_{\delta_s}(j(\Res_{\BZ_p^\times}(z_1)))
=w_{\delta_s}(\Res_{\BZ_p^\times}(j(z_1)))=\Res_{\BZ_p^\times}(j(z_2))=j(\Res_{\BZ_p^\times}(z_2))=j(w_D(x)).
\]
By \cite[Corollaire V.1.13]{Colmez-Langlands}, $D(s)\boxtimes\BZ_p^\times$ is generated by $(1-\varphi)D(s)^{\psi=1}$ as an $\SR(\Gamma)$-module. We conclude the corollary by the case $x\in (1-\varphi)D(s)^{\psi=1}$ and the fact that both $w_D,w_{\delta_s}$ are $\SR(\Gamma)$-antilinear.
\end{proof}

\begin{prop} \label{prop:exact-sq}
The maps
$$i_{\RP^1}: \SR(\delta_1)\boxtimes \RP^1 \rightarrow
D(s)\boxtimes \RP^1, \hskip 5pt (z_1,z_2)\mapsto
(i(z_1), i(z_2))$$ and $$j_{\RP^1}: D(s)\boxtimes \RP^1
\rightarrow \SR(\delta_2)\boxtimes\RP^1, \hskip 5pt
(z_1,z_2)\mapsto (j(z_1), j(z_2))$$ are well-defined morphisms of continuous $\GL_2(\BQ_p)$-representations. Moreover, we have the short exact sequence
\begin{equation} \label{eq:exact-sq-proof}
 0 \longrightarrow \SR(\delta_1)\boxtimes\RP^1
\stackrel{i_{\RP^1}}{\longrightarrow} D(s) \boxtimes \RP^1 \stackrel{j_{\RP^1}}{\longrightarrow}\SR(\delta_2)\boxtimes\RP^1
\longrightarrow 0.
\end{equation}
\end{prop}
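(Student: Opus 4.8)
The statement asks for three things about the coordinatewise maps: that $i_{\RP^1}$ and $j_{\RP^1}$ land in the asserted spaces (well-definedness), that they are continuous and $\GL_2(\BQ_p)$-equivariant, and that \eqref{eq:exact-sq-proof} is exact. My plan is to get well-definedness and equivariance out of Corollary~\ref{prop:exact-sq-right} and a companion statement on the sub-object side, and to obtain exactness by a diagram chase using the standard ``$\RP^1$-decomposition'' short exact sequences and the $3\times 3$ lemma.

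For $j_{\RP^1}$ this is nearly immediate. Since $j$ is $\SR$-linear and commutes with $\varphi$ and $\Gamma$, it commutes with $\psi$ and with $\Res_{\BZ_p^\times}=1-\varphi\psi$; so for $(z_1,z_2)\in D(s)\boxtimes\RP^1$ one has $\Res_{\BZ_p^\times}(z_2)=w_D(\Res_{\BZ_p^\times}(z_1))$, and applying $j$ together with the intertwining $j\circ w_D=w_{\delta_s}\circ j$ of Corollary~\ref{prop:exact-sq-right} gives $\Res_{\BZ_p^\times}(j(z_2))=w_{\delta_s}(\Res_{\BZ_p^\times}(j(z_1)))$, i.e.\ $(j(z_1),j(z_2))\in\SR(\delta_2)\boxtimes\RP^1$. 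Continuity is clear, and $\GL_2(\BQ_p)$-equivariance is checked on the generators in Proposition~\ref{prop:GL2action}, whose action is written entirely in terms of $\varphi,\psi,\Gamma$ and the operators $w$: the first three are intertwined by $j$ tautologically, and the $w$'s by Corollary~\ref{prop:exact-sq-right}.

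For $i_{\RP^1}$ the analogous input is the intertwining of the relevant $w$-operators by $i$, and this is the main point of the proof; I would deduce it by duality. First note $\check s=(\check\delta_1,\check\delta_2,\SL)$ again lies in $\SS_\irr$ and is non-exceptional (a routine check: $w(\check s)=w(s)$, $u(\check s)=u(s)$, and the ratio character is unchanged), so Corollary~\ref{prop:exact-sq-right} applies to $\check s$ and makes the coordinatewise quotient map $D(\check s)\boxtimes\RP^1\to\SR(\check\delta_2)\boxtimes\RP^1$ a continuous $\GL_2(\BQ_p)$-equivariant morphism; under $D(\check s)=\check D(s)$ this quotient map is exactly the Tate transpose of $i\colon\SR(\delta_1)\hookrightarrow D(s)$, and $\SR(\check\delta_2)=\check{\SR(\delta_1)}$. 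Transposing this morphism with respect to the perfect $\GL_2(\BQ_p)$-equivariant pairings $\{\cdot,\cdot\}_{\RP^1}$ of Proposition~\ref{prop:pairing,rank 1}, and unwinding the definition \eqref{eq:pairing} of $\{\cdot,\cdot\}_{\RP^1}$ together with the module-level adjunction between $i$ and its Tate transpose, one obtains a continuous $\GL_2(\BQ_p)$-equivariant map $\SR(\delta_1)\boxtimes\RP^1\to D(s)\boxtimes\RP^1$ whose effect on underlying pairs is $(y_1,y_2)\mapsto(i(y_1),i(y_2))$. In particular this pair lies in $D(s)\boxtimes\RP^1$, which is the desired well-definedness, and continuity and equivariance come for free. (One could instead try to prove the $w$-intertwining for $i$ by hand along the lines of the proof of Corollary~\ref{prop:exact-sq-right}, but $\SR(\delta_1)$ is not \'etale, so the generation statement used there is not directly available; the duality route seems cleaner.)

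It remains to prove exactness. Injectivity of $i_{\RP^1}$ is immediate from that of $i$, and $j_{\RP^1}\circ i_{\RP^1}=0$ since $j\circ i=0$. For each $\Delta$ among $\SR(\delta_1),D(s),\SR(\delta_2)$ one has a short exact sequence
\[
0\longrightarrow\Delta\boxtimes p\BZ_p\longrightarrow\Delta\boxtimes\RP^1\xrightarrow{\ \Res_{\BZ_p}\ }\Delta\longrightarrow0 ,
\]
in which $\Delta\boxtimes p\BZ_p=\varphi(\Delta)$ is embedded by $z\mapsto(0,z)$ and $\Res_{\BZ_p}(z_1,z_2)=z_1$: surjectivity holds since for given $z_1$ one may take $z_2$ to be any $\Res_{\BZ_p^\times}$-preimage of $w(\Res_{\BZ_p^\times}z_1)$, and the kernel consists of the pairs $(0,z_2)$ with $\Res_{\BZ_p^\times}(z_2)=0$, i.e.\ $z_2\in\varphi(\Delta)$. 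These three sequences fit, via $i_{\RP^1},j_{\RP^1}$ on the middle row, $i,j$ on the bottom row, and their restrictions to the $\varphi$-parts on the top row, into a commutative diagram with exact columns whose bottom row is the defining sequence $0\to\SR(\delta_1)\xrightarrow{i}D(s)\xrightarrow{j}\SR(\delta_2)\to0$ and whose top row is its image under $\varphi$, which is exact because $\varphi$ is injective on each of the three modules. The $3\times 3$ lemma then forces exactness of the middle row \eqref{eq:exact-sq-proof}. Thus the only genuinely delicate ingredient is the well-definedness of $i_{\RP^1}$, i.e.\ the $w$-intertwining on the sub-object side, which is why I would route that step through the duality argument above.
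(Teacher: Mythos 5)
Your strategy for $i_{\RP^1}$ --- deducing it by transposing the quotient-side statement for $\check s$ through duality --- is the same route the paper takes, and you correctly note the silent hypothesis that $\check s$ is again in $\SS_\irr$ and non-exceptional, which the paper does not spell out. The one place your write-up glosses a real point is the ``unwinding.'' The paper carries out the duality one level down, on $\BZ_p^\times$: from $j\circ w=w\circ j$ on $D(\check s)\boxtimes\BZ_p^\times$ and the $w$-invariance of $\{\cdot,\cdot\}$ it reads off $i\circ w=w\circ i$ on $\SR(\delta_1)\boxtimes\BZ_p^\times$, which \emph{immediately} gives well-definedness of $i_{\RP^1}$. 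Your $\RP^1$-level transposition does not quite do this directly: the pairing $\{(z_1,z_2),(z_1',z_2')\}_{\RP^1}=\{z_1,z_1'\}+\{\Res_{p\BZ_p}z_2,\Res_{p\BZ_p}z_2'\}$ only sees $z_1'$ and $\Res_{p\BZ_p}z_2'$, so unwinding the adjunction shows that the transpose $T$ of $j_{\RP^1}^{\check s}$ satisfies $T(y)_1=i(y_1)$ and $\Res_{p\BZ_p}T(y)_2=\Res_{p\BZ_p}i(y_2)$, but it says nothing directly about $\Res_{\BZ_p^\times}T(y)_2$ versus $\Res_{\BZ_p^\times}i(y_2)$; proving those agree is \emph{equivalent} to the $w$-intertwining of $i$ you are after, so as stated the step is circular. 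The shortest repair within your framework is to use the $\GL_2(\BQ_p)$-equivariance of $T$ once more, for $w=\wvec{0}{1}{1}{0}$: since $T(w(y))=w(T(y))$ and $w(y_1,y_2)=(y_2,y_1)$, the first-coordinate formula applied to $(y_2,y_1)$ gives $T(y)_2=T(y_2,y_1)_1=i(y_2)$, and well-definedness follows because $T(y)\in D(s)\boxtimes\RP^1$ by construction. Alternatively, run the duality at the $\BZ_p^\times$ level exactly as the paper does.

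For exactness you do something genuinely different from the paper, and it is correct. The paper exhibits an explicit preimage $(y,\,y'+w_D(\Res_{\BZ_p^\times}y))$ under $j_{\RP^1}$ of a given $(z,z')\in\SR(\delta_2)\boxtimes\RP^1$, while you fit the three $\boxtimes\RP^1$ objects into a $3\times3$ grid with columns $0\to\varphi(\Delta)\to\Delta\boxtimes\RP^1\xrightarrow{\Res_{\BZ_p}}\Delta\to 0$, check that the top and bottom rows (the image under $\varphi$ of the defining triangulation, and the triangulation itself) are exact, and invoke the nine lemma. Your version makes the structural reason for exactness transparent and avoids the explicit cocycle, at the cost of verifying commutativity of all the small squares; the paper's argument is shorter. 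Both are valid; the real content in either treatment is the well-definedness of $i_{\RP^1}$, which you have correctly identified as the delicate point.
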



\begin{proof}
By Propositions \ref{prop:GL2action}, \ref{prop:rank1-GL2action}, the $\GL_2(\BQ_p)$-actions on  $\SR(\delta_1)\boxtimes\RP^1,\SR(\delta_2)\boxtimes\RP^1$ and $D(s)\boxtimes \RP^1$ satisfy the same set of formulas. It thus follows that $i_{\RP^1}, j_{\RP^1}$ are
$\GL_2(\BQ_p)$-equivariant as long as they are well-defined. In general, the formula (\ref{eq:wD}) of $w_D$ is not convergent on $D^\dagger\boxtimes\BZ_p^\times$. Hence the well-defineness of $i_{\RP^1}$ and $j_{\RP^1}$ are not obvious from their definitions.

The well-defineness of $j_{\RP^1}$ follows from Corollary \ref{prop:exact-sq-right}. To show that $i_{\RP^1}$ is well-defined, we use the pairing $\{\cdot,\cdot\}$. First note that $i:\SR(\delta_1)\ra D(s)$ is dual to $j:D(\check{s})\ra\SR(\check{\delta}_1)$ with respect to $\langle\cdot,\cdot\rangle$. It therefore follows that $i:\SR(\delta_1)\boxtimes\BZ_p^\times\ra D(s)\boxtimes\BZ_p^\times$ is the dual of $j:D(\check{s})\boxtimes\BZ_p^\times\ra\SR(\check{\delta}_1)\boxtimes\BZ_p^\times$ with respect to $\langle\cdot,\cdot\rangle$.  Hence $i:\SR(\delta_1)\boxtimes\BZ_p^\times\ra D(s)\boxtimes\BZ_p^\times$ is the dual of $j:D(\check{s})\boxtimes\BZ_p^\times\ra\SR(\check{\delta}_1)\boxtimes\BZ_p^\times$ with respect to $\{\cdot,\cdot\}$. Since $\{\cdot,\cdot\}$ is $w$-invariant and we have already proved that $j$ commutes with $w$ on $D(\check{s})\boxtimes\BZ_p^\times$, we thus deduce that $i$ commutes with $w$ on $\SR(\delta_1)\boxtimes\BZ_p^\times$. Hence $i_{\RP^1}:\SR(\delta_1)\boxtimes \RP^1 \rightarrow
D(s)\boxtimes \RP^1$ is well-defined.

For $(\ref{eq:exact-sq-proof})$, the injectivity of $i_{\RP^1}$ and the exactness at $D(s)\boxtimes\RP^1$ is obvious. To show the surjectivity of $j_{\RP^1}$, for any $(z,z')\in \SR(\delta_2)\boxtimes\RP^1$, we pick $y\in D(s)$, $y'\in D(s)\boxtimes p\BZ_p$ which lift $z, \Res_{p\BZ_p}z'$ respectively. Then an easy computation shows that $j_{\RP^1}(y,y'+w_{D}(\Res_{\BZ_p^\times}y))=(z,z')$. This proves that $j_{\RP^1}$ is surjective.
\end{proof}

Henceforthe we identify $\SR(\delta_1)\boxtimes\RP^1$ with a submodule of $D(s)\boxtimes\RP^1$ via $i_{\RP^1}$.

\subsection{Proof of Emerton's conjecture}
We prove Conjecture
\ref{conj:Emerton-uni} for $p>2$ in this subsection. From now on, let $s\in \SS_\irr$ be
non-exceptional. Since $\mathscr{A}_s$ is a topological $\GL_2(\BQ_p)$-equivariant isomorphism between the contragredient representation $\Pi(\check{s})^*$ and $D^\natural(\check{s})\boxtimes \RP^1$, it induces an isomorphism
\[
\mathscr{A}_{s,\mathrm{an}}:
(\Pi(\check{s})_\an)^* \rightarrow ((D^\natural(s)\boxtimes\RP^1)_{\mathrm{an}})^*=D^\natural_\rig(s) \boxtimes \RP^1
\]
between coadmissible $D(\GL_2(\BZ_p))$-modules $(\Pi(\check{s})_\an)^*$ and $D^\natural_\rig(s) \boxtimes \RP^1$, where $D(\GL_2(\BZ_p))$ denotes the algebra of locally analytic distributions on $\GL_2(\BZ_p)$.

\begin{prop} \label{prop-comm-diag}
The diagram
\begin{equation} \label{eq:comm-diag}
\xymatrix{
(\Pi(\check{s})_\an)^* \ar[r]^{\mathscr{A}_{s,\an}} \ar[d]^{\iota_{\check{s}}^*} &
D^\natural_\rig(s) \boxtimes \RP^1 \ar[d]_{j_{\RP^1}} \\
\widetilde{\Sigma}(\check{\delta_2}, \check{\delta_1})^*
\ar[r]^{\mathscr{A}_2} & \SR(\delta_2)\boxtimes\RP^1 }
\end{equation}
is commutative. As a consequence, we have $j_{\RP^1}(D^\natural_\rig(s) \boxtimes \RP^1)=\mathscr{A}_{2}(\Sigma(\check{\delta_2}, \check{\delta_1})^*)$
\end{prop}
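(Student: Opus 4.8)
The plan is to obtain the commutativity of (\ref{eq:comm-diag}) by reducing to its Banach-level version --- Proposition \ref{prop:exact-sq-left} --- via a density argument, and then to deduce the image statement from this commutativity together with Proposition \ref{prop:injn-iotas}. First I would observe that all four maps of (\ref{eq:comm-diag}) are compatible with the canonical inclusions $\Pi(\check{s})^*\hookrightarrow(\Pi(\check{s})_\an)^*$ and $D^\natural(s)\boxtimes\RP^1\hookrightarrow D^\natural_\rig(s)\boxtimes\RP^1$. Indeed $\mathscr{A}_{s,\an}$ is by construction the continuous extension of $\mathscr{A}_s\colon\Pi(\check{s})^*\to D^\natural(s)\boxtimes\RP^1$; the map $j_{\RP^1}$ of Proposition \ref{prop:exact-sq} restricts from $D_\rig(s)\boxtimes\RP^1$ to $D^\natural_\rig(s)\boxtimes\RP^1$ and agrees on the subspace $D^\natural(s)\boxtimes\RP^1$ with the component-wise map $(z,z')\mapsto(j(z),j(z'))$; and $\iota_{\check{s}}$ factors through $\Pi(\check{s})_\an$ (its source being a locally analytic representation), so its analytic-level dual $\iota_{\check{s}}^*$ restricts on $\Pi(\check{s})^*$ to the Banach-level dual. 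Hence the restriction of (\ref{eq:comm-diag}) to $\Pi(\check{s})^*\subseteq(\Pi(\check{s})_\an)^*$ is exactly the commutative square of Proposition \ref{prop:exact-sq-left}.

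Next I would propagate this to all of $(\Pi(\check{s})_\an)^*$ by continuity: both $j_{\RP^1}\circ\mathscr{A}_{s,\an}$ and $\mathscr{A}_2\circ\iota_{\check{s}}^*$ are continuous, so it suffices that $\Pi(\check{s})^*$ be dense in $(\Pi(\check{s})_\an)^*$, equivalently that $D^\natural(s)\boxtimes\RP^1$ be dense in $D^\natural_\rig(s)\boxtimes\RP^1$. Since $(\Pi(\check{s})_\an)^*$ is a coadmissible $D(\GL_2(\BZ_p))$-module, it is reflexive with strong dual $\Pi(\check{s})_\an$; thus a subspace is dense exactly when its annihilator in $\Pi(\check{s})_\an$ is zero, and any $v\in\Pi(\check{s})_\an\subseteq\Pi(\check{s})$ killed by every element of $\Pi(\check{s})^*$ vanishes. (Alternatively, this density is built into Colmez's construction in \cite{Colmez-Langlands} of $D^\natural_\rig\boxtimes\RP^1$ as the natural extension of $D^\natural\boxtimes\RP^1$.) This proves that (\ref{eq:comm-diag}) commutes.

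For the consequence I would compute $\iota_{\check{s}}^*\big((\Pi(\check{s})_\an)^*\big)$. Tate duality preserves the type of $s$, so $\check{s}\in\SS_\irr$ is non-exceptional whenever $s$ is, and Proposition \ref{prop:injn-iotas} applied to $\check{s}$ yields an injection $\Sigma(\check{\delta_2},\check{\delta_1})\hookrightarrow\Pi(\check{s})_\an$; this is a closed embedding since morphisms of admissible locally analytic $\GL_2(\BQ_p)$-representations are strict. Dualizing the closed embedding, $\iota_{\check{s}}^*$ has image exactly $\Sigma(\check{\delta_2},\check{\delta_1})^*\subseteq\widetilde{\Sigma}(\check{\delta_2},\check{\delta_1})^*$. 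Since $\mathscr{A}_{s,\an}$ is an isomorphism and (\ref{eq:comm-diag}) commutes,
\[
j_{\RP^1}\big(D^\natural_\rig(s)\boxtimes\RP^1\big)=\mathscr{A}_2\big(\iota_{\check{s}}^*((\Pi(\check{s})_\an)^*)\big)=\mathscr{A}_2\big(\Sigma(\check{\delta_2},\check{\delta_1})^*\big).
\]
The two steps requiring genuine input --- beyond bookkeeping and the already-established Proposition \ref{prop:exact-sq-left} --- are the density of $\Pi(\check{s})^*$ in $(\Pi(\check{s})_\an)^*$ (which justifies the continuity argument) and the strictness used to pin down the image of $\iota_{\check{s}}^*$; I expect the density statement to be the point deserving the most care.
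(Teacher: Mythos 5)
Your proof follows the same route as the paper's: establish commutativity on the Banach dual $\Pi(\check{s})^*$ via Proposition \ref{prop:exact-sq-left}, extend by density to all of $(\Pi(\check{s})_\an)^*$, and then read off the image of $j_{\RP^1}$ from the injectivity of $\iota_{\check{s}}$ (Proposition \ref{prop:injn-iotas}). The one place you differ is minor and arguably clearer: the paper justifies density of $\Pi(\check{s})^*$ in $(\Pi(\check{s})_\an)^*$ by citing Schneider--Teitelbaum's theorem that $U_\an$ is dense in $U$ and then saying ``hence $U^*$ is dense in $U_\an^*$,'' whereas you supply the actual logical content of the ``hence'' --- reflexivity of the coadmissible dual plus the fact that functionals on the Banach space $\Pi(\check{s})$ separate points of the subspace $\Pi(\check{s})_\an$; you also make explicit the strictness argument behind the identification $\iota_{\check{s}}^*\bigl((\Pi(\check{s})_\an)^*\bigr)=\Sigma(\check{\delta_2},\check{\delta_1})^*$, which the paper simply asserts.
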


\begin{proof}
Recall that for an admissible Banach space representation $U$ of $\GL_2(\BQ_p)$, $U_{\mathrm{an}}$ is dense in $U$ (\cite[Theorem 7.1]{ST-an3}); hence $U^*$ is dense in $U_{\mathrm{an}}^*$.  The diagram (\ref{eq:comm-diag}) commutes on $\Pi(\check{s})^*\subset(\Pi(\check{s})_\an)^*$ following Proposition \ref{prop:exact-sq-left}. We thus conclude the commutativity of (\ref{eq:comm-diag}) by the density of $\Pi(\check{s})^*$ in $(\Pi(\check{s})_\an)^*$. It thus follows that $j_{\RP^1}(D^\natural_\rig(s) \boxtimes \RP^1)=\mathscr{A}_{2}(\iota_{\check{s}}^*((\Pi(\check{s})_\an)^*))=\mathscr{A}_{2}(\Sigma(\check{\delta_2}, \check{\delta_1})^*).$
\end{proof}

\begin{lem}\label{lem:SR+orthogonal}
$\SR^+(\check{\eta})\boxtimes_{\delta^{-1}}\RP^1$ and $\SR^+(\eta)\boxtimes_{\delta}\RP^1$ are orthogonal complements of each other under the pairing $\SR(\check{\eta})\boxtimes_{\delta^{-1}}\RP^1\times
\SR(\eta)\boxtimes_{\delta}\RP^1\rightarrow L$.
\end{lem}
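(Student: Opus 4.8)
The plan is to identify the two orthogonality statements at two different levels and to transport one to the other via the identification $\mathscr{A}_{\delta,\eta}$ of Proposition \ref{prop:map-A}. First I would recall that by Proposition \ref{prop:pairing,rank 1} the pairing $\{\cdot,\cdot\}_{\RP^1}:\SR(\check{\eta})\boxtimes_{\delta^{-1}}\RP^1\times\SR(\eta)\boxtimes_\delta\RP^1\to L$ is perfect and $\GL_2(\BQ_p)$-equivariant. Since $\SR^+(\eta)\boxtimes_\delta\RP^1=\mathscr{A}_{\delta,\eta}(\widetilde{\Sigma}(\eta^{-1}\epsilon,\delta^{-1}\eta)^*)$ is, by Proposition \ref{prop:map-A}, the dual of a locally analytic representation sitting inside $\SR(\eta)\boxtimes_\delta\RP^1$, it is a closed $\GL_2(\BQ_p)$-subrepresentation; similarly for the check-version. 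The orthogonal complement of a closed $\GL_2(\BQ_p)$-stable subspace under a perfect equivariant pairing is again closed and $\GL_2(\BQ_p)$-stable, so $(\SR^+(\eta)\boxtimes_\delta\RP^1)^\perp$ is a closed $\GL_2(\BQ_p)$-stable subspace of $\SR(\check{\eta})\boxtimes_{\delta^{-1}}\RP^1$; the goal is to show it equals $\SR^+(\check{\eta})\boxtimes_{\delta^{-1}}\RP^1$.

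The key containment $\SR^+(\check{\eta})\boxtimes_{\delta^{-1}}\RP^1\subseteq(\SR^+(\eta)\boxtimes_\delta\RP^1)^\perp$ I would prove by direct computation with formula (\ref{eq:pairing}). For $(z_1,z_2)$ in the check-plus side and $(z_1',z_2')$ in the plus side, one needs $\{z_1,z_1'\}+\{\Res_{p\BZ_p}(z_2),\Res_{p\BZ_p}(z_2')\}=0$. Here $z_1,z_1'\in\SR^+$ (tensored with standard bases) and the restriction-to-$p\BZ_p$ terms lie in $\varphi(\SR^+)$; the residue pairing $\res_0$ of two elements of $\SR^+$ against each other picks out only finitely many coefficients, and the point is that on $\SR^+\times\SR^+$ one in fact has $\{x,y\}=0$ whenever the total Hodge-Tate weight forces it — more precisely, $\SR^+(\eta)$ and $\SR^+(\check\eta)=\SR^+(\eta^{-1}\epsilon)$ pair to give a residue of something in $\SR^+\frac{dT}{1+T}$, whose residue $\res_0$ vanishes. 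So each of the two terms is separately zero, giving the containment. I would phrase this cleanly by noting that the restriction of $\{\cdot,\cdot\}_{\RP^1}$ to $\SR^+(\check\eta)\boxtimes_{\delta^{-1}}\RP^1\times\SR^+(\eta)\boxtimes_\delta\RP^1$ is identically zero.

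For the reverse containment (equivalently, that the annihilator is no larger), I would use a dimension/duality count: $\SR(\eta)\boxtimes_\delta\RP^1$ is the dual of $\widetilde{\Sigma}(\delta^{-1}\eta,\eta^{-1}\epsilon)$ (the "other" induction), $\SR^+(\eta)\boxtimes_\delta\RP^1$ is the dual of $\widetilde{\Sigma}(\eta^{-1}\epsilon,\delta^{-1}\eta)$, and the quotient $\SR(\eta)\boxtimes_\delta\RP^1/\SR^+(\eta)\boxtimes_\delta\RP^1$ should be identified with (the dual of) the remaining induction; applying the perfect pairing, the annihilator of $\SR^+(\eta)\boxtimes_\delta\RP^1$ has the "right size" and must therefore coincide with $\SR^+(\check\eta)\boxtimes_{\delta^{-1}}\RP^1$ once we know the latter is contained in it and both are closed with the same cokernel. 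Concretely: since $\{\cdot,\cdot\}_{\RP^1}$ is perfect, $(\SR^+(\eta)\boxtimes_\delta\RP^1)^\perp$ is naturally the continuous dual of the quotient $\SR(\eta)\boxtimes_\delta\RP^1/\SR^+(\eta)\boxtimes_\delta\RP^1$; I would identify this quotient, via $\mathscr{A}_{\delta,\eta}$ composed with the inclusion $\LA\subset$ the relevant distribution space, with $\widetilde{\Sigma}(\delta^{-1}\eta,\eta^{-1}\epsilon)/\widetilde{\Sigma}(\eta^{-1}\epsilon,\delta^{-1}\eta)^{*\vee}$, and match this up with $\SR^+(\check\eta)\boxtimes_{\delta^{-1}}\RP^1$ on the other side.

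The main obstacle I expect is the reverse containment: proving that nothing outside $\SR^+(\check\eta)\boxtimes_{\delta^{-1}}\RP^1$ annihilates $\SR^+(\eta)\boxtimes_\delta\RP^1$. The cleanest route is probably to avoid identifying the quotient abstractly and instead argue symmetrically: by the vanishing computation above applied with $(\delta,\eta)$ replaced by $(\delta^{-1},\check\eta)$ (noting $\check{\check\eta}=\eta$, $(\delta^{-1})^{-1}=\delta$), we also get $\SR^+(\eta)\boxtimes_\delta\RP^1\subseteq(\SR^+(\check\eta)\boxtimes_{\delta^{-1}}\RP^1)^\perp$. Combined with perfectness of the pairing, the two subspaces $A=\SR^+(\check\eta)\boxtimes_{\delta^{-1}}\RP^1$ and $B=\SR^+(\eta)\boxtimes_\delta\RP^1$ satisfy $A\subseteq B^\perp$ and $B\subseteq A^\perp$; to upgrade these inclusions to equalities I would invoke that $B^\perp/A$ and $A^\perp/B$ are dual to each other (again by perfectness) together with the fact, provable on the Robba-ring side, that $A^\perp$ has no proper closed $\GL_2(\BQ_p)$-stable subspace properly containing $B$ — this last point reduces to the structure of $\SR(\eta)/\SR^+(\eta)$ as a $(\varphi,\Gamma)$-module, where $\SR^+$ is saturated. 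If one prefers to be fully explicit, one computes directly: given $(z_1,z_2)\in A^\perp$, testing against all $(z_1',0)$ with $z_1'\in(\SR^+)^{\psi=0}$ forces $z_1\in\SR^+(\check\eta)$ by the perfectness of the residue pairing between $\SR^+$ and $\SR/\SR^+$, and similarly $\Res_{p\BZ_p}(z_2)$, hence $z_2$, lies in the plus part; this direct computation is routine but the bookkeeping of standard bases and twists is where care is needed.
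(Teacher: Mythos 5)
Your final ``fully explicit'' paragraph is essentially the paper's proof, but the preceding two routes contain gaps, and the explicit computation has a real error that needs fixing.

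The paper's argument is much shorter than you suggest: it simply reduces to showing that $\SR^+$ is its own orthogonal complement under the pairing $\{f,g\}=\res_0(\sigma_{-1}(f)g)$ on $\SR\times\SR$. Vanishing of $\{\SR^+,\SR^+\}$ is immediate; for the converse one pairs $f=\sum_i a_i T^i\in(\SR^+)^\perp$ against $\sigma_{-1}(T^j)$, $j\in\BN$, and reads off $a_{-j-1}=0$. Lifting this to the $\boxtimes\RP^1$ level is then routine: for $(z_1,z_2)\in(\SR^+(\eta)\boxtimes_\delta\RP^1)^\perp$, pair against $(z_1',w_\delta(\Res_{\BZ_p^\times}z_1'))$ with $z_1'\in\SR^+(\eta)$ arbitrary --- the second term of $\{\cdot,\cdot\}_{\RP^1}$ drops out since $\Res_{p\BZ_p}$ kills $\SR^+\boxtimes\BZ_p^\times$ --- to get $z_1\in(\SR^+)^\perp=\SR^+$; then $\Res_{\BZ_p^\times}z_2=w_{\delta^{-1}}(\Res_{\BZ_p^\times}z_1)\in\SR^+$, and pairing against $(0,z_2')$ with $z_2'\in\SR^+\boxtimes p\BZ_p$ handles $\Res_{p\BZ_p}z_2$. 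This is precisely the content of your ``perfectness of the residue pairing between $\SR^+$ and $\SR/\SR^+$'', but the paper \emph{proves} it in two lines rather than invoking it.

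Two specific problems with your write-up. First, the element $(z_1',0)$ with $z_1'\in(\SR^+)^{\psi=0}$ does \emph{not} lie in $\SR^+(\eta)\boxtimes_\delta\RP^1$ unless $z_1'=0$: such a $z_1'$ satisfies $\Res_{\BZ_p^\times}z_1'=z_1'$, and membership in $\boxtimes_\delta\RP^1$ would force $w_\delta(z_1')=0$, hence $z_1'=0$ since $w_\delta$ is an involution. You want $z_1'\in\SR^+\boxtimes p\BZ_p=\varphi(\SR^+)$ (for which $\Res_{\BZ_p^\times}z_1'=0$), not $(\SR^+)^{\psi=0}$; or better, use the elements $(z_1',w_\delta(\Res_{\BZ_p^\times}z_1'))$ as above. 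Second, your ``cleanest route'' is circular: $A\subseteq B^\perp$ and $B\subseteq A^\perp$ are the \emph{same} statement (both are equivalent to $\{A,B\}_{\RP^1}=0$), so the symmetric application of the vanishing computation buys you nothing new, and the maximality claim you then need (``$A^\perp$ has no proper closed $\GL_2$-stable subspace properly containing $B$'') is neither proved nor obviously true. The dimension/duality route similarly requires identifying the quotient $\SR(\eta)\boxtimes_\delta\RP^1/\SR^+(\eta)\boxtimes_\delta\RP^1$, which is more work than the direct residue computation. Drop the first two routes and fix the test vectors in the third, and you recover the paper's argument.
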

\begin{proof}
It suffices to show that $\SR^+$ is the orthogonal complements of itself under the pairing $\{\cdot,\cdot\}:\SR\times\SR\ra L$. It is obvious that $\{\SR^+,\SR^+\}=0$. On the other hand, if $f=\sum_{i\in\BZ}a_iT^i\in (\SR^+)^{\bot}$, then for any $j\in\BN$, $\{\sigma_{-1}(T^j),f\}=a_{-j-1}$ implies $a_{-j-1}=0$, yielding $f\in\SR^+$.
\end{proof}

\begin{lem} \label{lem:orthogonal}
$j_{\RP^1}( D^\natural_\rig(s)
\boxtimes \RP^1)\subset\SR(\delta_2) \boxtimes \RP^1$
and $D^\natural_\rig(\check{s}) \boxtimes \RP^1 \cap
\SR(\check{\delta}_2) \boxtimes \RP^1$ are
orthogonal complements of each other under $\{ \cdot ,   \cdot\}_{\RP^1}:\SR(\delta_2)\boxtimes\RP^1\times\SR(\check{\delta}_2) \boxtimes \RP^1\ra L$.
\end{lem}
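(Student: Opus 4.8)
The plan is to derive the lemma from a short formal argument about orthogonal complements, fed by three inputs that are already available: the exact sequence $(\ref{eq:exact-sq-proof})$ (and its analogue for $\check s$), the global orthogonality of Proposition~\ref{prop:Drig-orthogonal}, and Proposition~\ref{prop-comm-diag}. Write $V=D(s)\boxtimes\RP^1$, $W=D(\check s)\boxtimes\RP^1$, $A=D^\natural_\rig(s)\boxtimes\RP^1\subseteq V$, and $B=D^\natural_\rig(\check s)\boxtimes\RP^1\subseteq W$; by Proposition~\ref{prop:Drig-orthogonal}, $A$ and $B$ are mutual orthogonal complements under the perfect, $\GL_2(\BQ_p)$-equivariant pairing $\{\cdot,\cdot\}_{\RP^1}\colon V\times W\to L$. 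By $(\ref{eq:exact-sq-proof})$, $V':=\SR(\delta_1)\boxtimes\RP^1$ is a closed submodule of $V$ with quotient map $j_{\RP^1}\colon V\to\SR(\delta_2)\boxtimes\RP^1$, and dually $\SR(\check\delta_2)\boxtimes\RP^1$ is the sub of $W$, via an inclusion $\check i_{\RP^1}$. Since $j_{\RP^1}$ is defined on all of $V$, the inclusion $j_{\RP^1}(A)\subseteq\SR(\delta_2)\boxtimes\RP^1$ — the first assertion — is immediate, and the pairing in the statement is the one of Proposition~\ref{prop:pairing,rank 1} on the rank $1$ objects.

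First I would record the adjointness of the maps with the pairings. As in the proof of Proposition~\ref{prop:exact-sq}, $i$ is adjoint to the quotient map of the $\check s$-sequence and $j$ is adjoint to the inclusion $\check i$, with respect to $\{\cdot,\cdot\}$; the same holds after $\boxtimes\RP^1$, because $\{\cdot,\cdot\}_{\RP^1}$ is built from $\{\cdot,\cdot\}$ via formula $(\ref{eq:pairing})$ and $i_{\RP^1},j_{\RP^1}$ act componentwise. Hence for $a\in A$ and $n\in\SR(\check\delta_2)\boxtimes\RP^1$ one has $\{j_{\RP^1}(a),n\}_{\RP^1}=\{a,\check i_{\RP^1}(n)\}_{\RP^1}$, so $n$ annihilates $j_{\RP^1}(A)$ if and only if $\check i_{\RP^1}(n)\in A^{\perp}=B$, that is, if and only if $n$ lies in $D^\natural_\rig(\check s)\boxtimes\RP^1\cap\SR(\check\delta_2)\boxtimes\RP^1$. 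In other words, the orthogonal complement of $j_{\RP^1}(A)$ inside $\SR(\check\delta_2)\boxtimes\RP^1$ is exactly $D^\natural_\rig(\check s)\boxtimes\RP^1\cap\SR(\check\delta_2)\boxtimes\RP^1$. (One should also note that the perfect pairing of Proposition~\ref{prop:pairing,rank 1} on these rank $1$ objects coincides with the one obtained by descending $\{\cdot,\cdot\}_{\RP^1}$ modulo $V'$ under the identification $\SR(\check\delta_2)\boxtimes\RP^1=(V')^{\perp}$; this again follows from $(\ref{eq:pairing})$ and the adjointness just used.)

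To conclude, one passes to bi-orthogonals: the pairing $\SR(\delta_2)\boxtimes\RP^1\times\SR(\check\delta_2)\boxtimes\RP^1\to L$ is perfect and the spaces involved are reflexive, so a closed subspace is its own bi-orthogonal, and the previous step then shows that $D^\natural_\rig(\check s)\boxtimes\RP^1\cap\SR(\check\delta_2)\boxtimes\RP^1$ and $\overline{j_{\RP^1}(A)}$ are mutual orthogonal complements. Thus the lemma reduces to the single assertion that $j_{\RP^1}(D^\natural_\rig(s)\boxtimes\RP^1)$ is \emph{closed} in $\SR(\delta_2)\boxtimes\RP^1$; this is the part of the argument I expect to be the real obstacle.

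That closedness is precisely what Proposition~\ref{prop-comm-diag} provides, since it identifies $j_{\RP^1}(D^\natural_\rig(s)\boxtimes\RP^1)$ with $\mathscr{A}_2(\Sigma(\check\delta_2,\check\delta_1)^*)$. By $(\ref{eq:Sigma})$, $\Sigma(\check\delta_2,\check\delta_1)$ is either $\widetilde{\Sigma}(\check\delta_2,\check\delta_1)$ or its quotient by a closed finite-dimensional invariant subspace, so $\Sigma(\check\delta_2,\check\delta_1)^*$ is a closed submodule of $\widetilde{\Sigma}(\check\delta_2,\check\delta_1)^*$; by Proposition~\ref{prop:map-A}, $\mathscr{A}_2=\mathscr{A}_{\delta_s,\delta_2}$ carries $\widetilde{\Sigma}(\check\delta_2,\check\delta_1)^*$ topologically isomorphically onto $\SR^+(\delta_2)\boxtimes\RP^1$; and $\SR^+(\delta_2)\boxtimes\RP^1$ is closed in $\SR(\delta_2)\boxtimes\RP^1$ because $\SR^+$ is closed in $\SR$ (a Laurent series lying in some $\SE^{]0,r]}$ with vanishing negative part already belongs to $\SE^{]0,+\infty]}=\SR^+$, and the vanishing of the negative coefficients is a closed condition on each $\SE^{]0,r]}$). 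Combining these, $j_{\RP^1}(D^\natural_\rig(s)\boxtimes\RP^1)$ is closed, which finishes the proof. The points that genuinely need care are this closedness step and the reflexivity/duality facts invoked when passing between closed subspaces and bi-orthogonals in these non-Banach (coadmissible, nuclear Fréchet, or $LF$) spaces; both are standard, but should be stated explicitly.
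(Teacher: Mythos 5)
Your proof is correct, but it takes a genuinely different route from the paper's for the hard direction. You establish the clean equivalence $(j_{\RP^1}(A))^{\perp}=B\cap\SR(\check\delta_2)\boxtimes\RP^1$ from the adjointness of $i_{\RP^1}$ and $j_{\RP^1}$ together with Proposition~\ref{prop:Drig-orthogonal}, and then pass to bi-orthogonals via the bipolar theorem, reducing everything to the closedness of $j_{\RP^1}(D^\natural_\rig(s)\boxtimes\RP^1)$. The paper instead proves only the easy inclusion $j_{\RP^1}(A)\subseteq(B\cap\SR(\check\delta_2)\boxtimes\RP^1)^{\perp}$ from the same adjointness, and then obtains the reverse inclusion by a hands-on Hahn--Banach argument: it lifts a given $x$ in the orthogonal complement to $\tilde x\in D(s)\boxtimes\RP^1$, extends the induced functional on $j_{\RP^1}(D^\natural_\rig(\check s)\boxtimes\RP^1)$, represents it by some $x'\in\SR(\delta_1)\boxtimes\RP^1$, and checks that $\tilde x-x'\in D^\natural_\rig(s)\boxtimes\RP^1$ with $j_{\RP^1}(\tilde x-x')=x$; this requires the closedness of $j_{\RP^1}(D^\natural_\rig(\check s)\boxtimes\RP^1)$ together with the open mapping theorem to ensure that the quotient and subspace topologies agree. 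For the closedness itself, the paper invokes coadmissibility of the duals as $D(\GL_2(\BZ_p))$-modules (Schneider--Teitelbaum's \cite[Lemma 3.6]{ST-an3}), whereas you give the more elementary argument that $\Sigma(\check\delta_2,\check\delta_1)^*\hookrightarrow\widetilde\Sigma(\check\delta_2,\check\delta_1)^*$ is the annihilator of a finite-dimensional (hence closed) kernel, hence closed, and that $\SR^+(\delta_2)\boxtimes\RP^1$ is closed in $\SR(\delta_2)\boxtimes\RP^1$. Both approaches ultimately rest on the same underlying duality facts (that the pairing on $\SR(\delta_i)\boxtimes\RP^1$ realizes the topological dual, not merely an injection into the algebraic dual), which you rightly flag as needing care; the paper's Hahn--Banach route makes the use of these facts somewhat more explicit, while your bipolar route is structurally cleaner. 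In short: your proof is a valid alternative, with a stronger first step, a bipolar argument in place of the paper's direct lifting argument, and a more elementary justification of closedness.
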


\begin{proof}
By the constructions of $i_{\RP^1}$ and $j_{\RP^1}$, one easily checks that
$i_{\RP^1}: \SR(\check{\delta_2})\boxtimes \RP^1\rightarrow
D(\check{s})\boxtimes \RP^1$ is dual to
$j_{\RP^1}: D(s)\boxtimes\RP^1\rightarrow \SR(\delta_2)\boxtimes
\RP^1$ with respect to $\{\cdot,\cdot\}_{\RP^1}$. Thus by Proposition \ref{prop:Drig-orthogonal}, we deduce
that
\[
\{j_{\RP^1}(x),y \}_{\RP^1}=\{x,y\}_{\RP^1}=0
\]
for any $x \in D^\natural_\rig(s) \boxtimes \RP^1$ and $y\in D^\natural_\rig(\check{s}) \boxtimes \RP^1 \cap
\SR(\check{\delta_2}) \boxtimes\RP^1$. This proves $j_{\RP^1}(D^\natural_\rig(s) \boxtimes \RP^1)\subseteq (D^\natural_\rig(\check{s}) \boxtimes \RP^1 \cap
\SR(\check{\delta_2}) \boxtimes\RP^1)^{\bot}$.

On the other hand, since $\Sigma(\check{\delta_1}, \check{\delta_2})$ and $\widetilde{\Sigma}(\check{\delta_1}, \check{\delta_2})$ are admissible locally analytic representations, $\Sigma(\check{\delta_1}, \check{\delta_2})^*$ and $\widetilde{\Sigma}(\check{\delta_1}, \check{\delta_2})^*$ are coadmissible $D(\GL_2(\BZ_p))$-modules. Therefore $\Sigma(\check{\delta_1}, \check{\delta_2})^*$ is a closed subspace of $\widetilde{\Sigma}(\check{\delta_1}, \check{\delta_2})^*$ by \cite[Lemma 3.6]{ST-an3}. This implies that $j_{\RP^1}(D^\natural_\rig(\check{s}) \boxtimes \RP^1)=\mathscr{A}_2(\Sigma(\check{\delta_1}, \check{\delta_2})^*)$ is a closed subspace of $\SR^+(\check{\delta}_1)\boxtimes\RP^1$ by Proposition \ref{prop-comm-diag}; hence $j_{\RP^1}(D^\natural_\rig(\check{s}) \boxtimes \RP^1)$ is Fr\'echet complete with the subspace topology of $\SR(\check{\delta}_1)\boxtimes\RP^1$. By the open mapping theorem for Fr\'echet type spaces (\cite[Proposition 8.8]{Schneider}), we deduce that $j_{\RP^1}:D^\natural_\rig(\check{s}) \boxtimes \RP^1\ra j_{\RP^1}(D^\natural_\rig(\check{s}) \boxtimes \RP^1)$ is open. Therefore the quotient topology and the subspace topology on $j_{\RP^1}(D^\natural_\rig(\check{s}) \boxtimes \RP^1)$ coincide.

Now for any $x\in(D^\natural_\rig(\check{s}) \boxtimes \RP^1 \cap
\SR(\check{\delta_2}) \boxtimes\RP^1)^{\bot}\subset\SR(\delta_2)\boxtimes\RP^1$,  we pick $\tilde{x}\in D(s) \boxtimes
\RP^1$ lifting $x$. The continuous linear functional $f(y)=\{\tilde{x},y
\}_{\RP^1}$ on $D_\rig^\natural(\check{s}) \boxtimes \RP^1$
induces a continuous linear functional $\bar{f}$ on $j_{\RP^1}(D_\rig^\natural(\check{s}) \boxtimes \RP^1)$. Applying Hahn-Banach theorem for Fr\'echet type spaces (\cite[Corollary 9.4]{Schneider}), we extend $\bar{f}$ to a continuous linear functional on
$\SR(\check{\delta_1})\boxtimes\RP^1$. Since the pairing
$\SR(\check{\delta_1})\boxtimes\RP^1\times\SR(\delta_1)\boxtimes\RP^1\ra L$ is perfect, we may suppose that the extension of $\bar{f}$ is defined by some $x'\in \SR(\delta_1)\boxtimes
\RP^1$. It therefore follows that for any $y\in D^\natural_\rig(\check{s}) \boxtimes \RP^1  $,
\[
\{\tilde{x}-x', y\}_{\RP^1} = \{\tilde{x},y\}_{\RP^1}-\{x',j_{\RP^1}(y)\}_{\RP^1}=\bar{f}(j_{\RP^1}(y))-\{x',j_{\RP^1}(y)\}_{\RP^1}=0,
\]
 yielding $\tilde{x}-x'\in D^\natural_\rig(s) \boxtimes
\RP^1$. We thus conclude that
$x\in j_{\RP^1}(D^\natural_\rig(s) \boxtimes \RP^1)$ because
$j_{\RP^1}(\tilde{x}-x')=x$. This proves $(D^\natural_\rig(\check{s}) \boxtimes \RP^1 \cap
\SR(\check{\delta_2}) \boxtimes\RP^1)^{\bot}\subseteq j_{\RP^1}(D^\natural_\rig(s) \boxtimes \RP^1)$.
\end{proof}

\begin{prop} \label{prop:analytic-vector-0} The following are true:
\begin{enumerate}
\item \label{it:has-L} if $\delta_s=x^{k-1}$, where $k$ is an integer $\geq 2$,
then $\SR(\delta_1)\boxtimes \RP^1 \cap
D^\natural_\rig(s) \boxtimes \RP^1$ contains
$\SR^+(\delta_1)\boxtimes\RP^1$ as a closed subspace of
codimension  $k$, and $j_{\RP^1}(D^\natural_\rig(s) \boxtimes
\RP^1)$ is a closed subspace of
$\SR^+(\delta_2)\boxtimes\RP^1$ of codimension $k$;
\item \label{it:no-L} otherwise, $\SR(\delta_1)\boxtimes \RP^1 \cap
D^\natural_\rig(s) \boxtimes \RP^1=\SR^+(\delta_1)\boxtimes
\RP^1$ and $j_{\RP^1}(D^\natural_\rig(s) \boxtimes \RP^1)
=\SR^+(\delta_2)\boxtimes \RP^1$.
\end{enumerate}
\end{prop}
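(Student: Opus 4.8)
The plan is to analyze the image $j_{\RP^1}(D^\natural_\rig(s)\boxtimes\RP^1)$ inside $\SR(\delta_2)\boxtimes\RP^1$ directly, and deduce the statement about $\SR(\delta_1)\boxtimes\RP^1\cap D^\natural_\rig(s)\boxtimes\RP^1$ by duality. By Proposition \ref{prop-comm-diag} we have $j_{\RP^1}(D^\natural_\rig(s)\boxtimes\RP^1)=\mathscr{A}_2(\widetilde{\Sigma}(\check{\delta_2},\check{\delta_1})^*)$; but more precisely, since $\iota_{\check s}^*$ factors through the quotient by $\widetilde{\Sigma}(\check\delta_2,\check\delta_1)^*/\Sigma(\check\delta_2,\check\delta_1)^*$ and $\Sigma(\check\delta_1,\check\delta_2)^* = \iota_{\check s}^*((\Pi(\check s)_\an)^*)$ in the relevant cases, the key point is to identify $j_{\RP^1}(D^\natural_\rig(s)\boxtimes\RP^1)$ with $\mathscr{A}_2(\Sigma(\check\delta_1,\check\delta_2)^*)$ via the injectivity of $\iota_{\check s}$ established in Proposition \ref{prop:injn-iotas}. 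Using Proposition \ref{prop:map-A}, $\mathscr{A}_2$ identifies $\widetilde{\Sigma}(\check\delta_1,\check\delta_2)^* = \widetilde{\Sigma}(\delta_2\epsilon^{-1}\cdots)^*$ with $\SR^+(\delta_2)\boxtimes\RP^1$; hence $j_{\RP^1}(D^\natural_\rig(s)\boxtimes\RP^1)$ lands in $\SR^+(\delta_2)\boxtimes\RP^1$, and its codimension there equals the dimension of the cokernel of $\iota_{\check s}:\Sigma(\check\delta_1,\check\delta_2)\hookrightarrow\widetilde{\Sigma}(\check\delta_1,\check\delta_2)$ — wait, one must be careful about which side the finite-dimensional piece sits on.

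Concretely, first I would observe that $w(\delta_s) = w(\delta_1)-w(\delta_2) = w(s)$, so $\delta_s = x^{k-1}$ (case (i), with $k\ge 2$) is exactly the condition $w(s)=k$ a positive integer together with $\delta_s$ of that special form; in case (ii) either $w(\delta_s)\notin\BN$ or $w(\delta_s)\in\BN$ but $\delta_s\ne x^{k-1}$. In case (ii), $\widetilde\Sigma(\check\delta_2,\check\delta_1)^*/\Sigma(\check\delta_2,\check\delta_1)^*$ is zero (if $w\notin\BN$) or finite-dimensional in a way that does not affect $\iota_{\check s}$, and by Proposition \ref{prop:injn-iotas} the map $\iota_{\check s}$ restricted to the locally analytic vectors is such that $\Sigma(\check\delta_1,\check\delta_2)\to\Pi(\check s)$ is injective with dense image; tracing through Proposition \ref{prop-comm-diag}, $\iota_{\check s}^*((\Pi(\check s)_\an)^*) = \widetilde\Sigma(\check\delta_2,\check\delta_1)^*$ fully, so $j_{\RP^1}(D^\natural_\rig(s)\boxtimes\RP^1) = \mathscr{A}_2(\widetilde\Sigma(\check\delta_1,\check\delta_2)^*) = \SR^+(\delta_2)\boxtimes\RP^1$. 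In case (i), the image $\iota_{\check s}^*((\Pi(\check s)_\an)^*)$ is the annihilator of $\ker\iota_{\check s}$; by Proposition \ref{prop:injn-iotas} the map $\iota_{\check s}$ on $\widetilde\Sigma(\check\delta_1,\check\delta_2)$ has kernel the $k$-dimensional locally algebraic subrepresentation $(\check\delta_2\circ\det)\otimes\Sym^{k-1}L^2$ contained in $M(\check s)$, so $\iota_{\check s}^*((\Pi(\check s)_\an)^*)$ is a closed subspace of $\widetilde\Sigma(\check\delta_1,\check\delta_2)^*$ of codimension $k$, whence $j_{\RP^1}(D^\natural_\rig(s)\boxtimes\RP^1)$ is a closed subspace of $\SR^+(\delta_2)\boxtimes\RP^1$ of codimension $k$. (Here closedness uses the open mapping / coadmissibility argument exactly as in the proof of Lemma \ref{lem:orthogonal}.)

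For the statement on $\SR(\delta_1)\boxtimes\RP^1\cap D^\natural_\rig(s)\boxtimes\RP^1$, I would invoke Lemma \ref{lem:orthogonal} (applied with roles of $s$ and $\check s$, i.e., to $\delta_1$ in place of $\delta_2$): $\SR(\delta_1)\boxtimes\RP^1\cap D^\natural_\rig(s)\boxtimes\RP^1$ is the orthogonal complement of $j_{\RP^1}(D^\natural_\rig(\check s)\boxtimes\RP^1)$ under $\{\cdot,\cdot\}_{\RP^1}:\SR(\delta_1)\boxtimes\RP^1\times\SR(\check\delta_1)\boxtimes\RP^1\to L$. Applying the already-proved first half to $\check s$ in place of $s$: in case (ii), $j_{\RP^1}(D^\natural_\rig(\check s)\boxtimes\RP^1) = \SR^+(\check\delta_1)\boxtimes\RP^1$, and by Lemma \ref{lem:SR+orthogonal} its orthogonal complement is $\SR^+(\delta_1)\boxtimes\RP^1$, giving (ii). In case (i), $j_{\RP^1}(D^\natural_\rig(\check s)\boxtimes\RP^1)$ is a closed codimension-$k$ subspace of $\SR^+(\check\delta_1)\boxtimes\RP^1$; its orthogonal complement therefore contains the orthogonal complement of $\SR^+(\check\delta_1)\boxtimes\RP^1$, namely $\SR^+(\delta_1)\boxtimes\RP^1$, as a closed subspace of codimension $k$ (the codimension being preserved by perfectness of the pairing and the finite-dimensionality of the gap). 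This yields (i). I expect the main obstacle to be the bookkeeping in case (i): correctly matching up which $k$-dimensional locally algebraic constituent is killed by which map, keeping track of twists between $\Sigma(s)$, $\Sigma(\delta_1,\delta_2)$, $\widetilde\Sigma(\delta_1,\delta_2)$ and their duals under Tate/Cartan duality, and verifying that the codimension-$k$ closed subspace claim is compatible on both sides of the pairing; the functional-analytic inputs (closedness, open mapping, Hahn–Banach) are all already packaged in Lemmas \ref{lem:orthogonal} and \ref{lem:SR+orthogonal} and their proofs.
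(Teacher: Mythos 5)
Your proposal is correct and follows essentially the same route as the paper: identify $j_{\RP^1}(D^\natural_\rig(s)\boxtimes\RP^1)$ with $\mathscr{A}_2(\Sigma(\cdot,\cdot)^*)$ via Proposition \ref{prop-comm-diag}, use the fact that $\Sigma$ differs from $\widetilde{\Sigma}$ by the $k$-dimensional locally algebraic piece precisely when $\delta_s=x^{k-1}$ (so $\Sigma^*$ sits closed of codimension $k$ inside $\widetilde{\Sigma}^* \cong \SR^+(\delta_2)\boxtimes\RP^1$), then transport the statement to $\SR(\delta_1)\boxtimes\RP^1\cap D^\natural_\rig(s)\boxtimes\RP^1$ by applying Lemmas \ref{lem:orthogonal} and \ref{lem:SR+orthogonal} to $\check s$. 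The one point where you spend extra words re-deriving the annihilator-of-$\ker\iota_{\check s}$ description is already packaged inside Proposition \ref{prop-comm-diag}, so your argument and the paper's are the same in substance; the twist/duality bookkeeping you flag as a worry is indeed the only place to be careful, and neither your write-up nor the paper's display keeps it entirely straight notationally, but the logic is sound.
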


\begin{proof}
We prove $(\mathrm{i})$ only. The proof of $(\mathrm{ii})$ is similar. For $(\mathrm{i})$, it follows from Proposition \ref{prop-comm-diag} that $j_{\RP^1}(D^\natural_\rig(s) \boxtimes \RP^1)=\mathscr{A}_2(\Sigma(\check{\delta_2}, \check{\delta_1})^*)$. Recall that $\Sigma(\check{\delta_2}, \check{\delta_1})$ is a quotient of $\widetilde{\Sigma}(\check{\delta_2},\check{\delta_1})$
 by a $k$-dimensional subrepresentation. Hence $\Sigma(\check{\delta_2}, \check{\delta_1})^*$ is a closed subspace of
$\widetilde{\Sigma}(\check{\delta_2}, \check{\delta_1})^*$ of codimension $k$, yielding that $j_{\RP^1}(D^\natural_\rig(s) \boxtimes
\RP^1)$ is a closed subspace of
$\SR^+(\delta_2)\boxtimes\RP^1$ of codimension $k$.

On the other hand, as
\[
 \SR(\delta_1)\boxtimes \RP^1 \cap
D^\natural_\rig(s) \boxtimes \RP^1 =j_{\RP^1}(D^\natural_\rig(\check{s}) \boxtimes \RP^1)^{\bot} \quad\text{and}\quad
\SR^+(\delta_1)\boxtimes\RP^1=(\SR^+(\check{\delta}_1) \boxtimes \RP^1)^{\bot}
\]
by Lemmas \ref{lem:orthogonal}, \ref{lem:SR+orthogonal}, we deduce that
$\SR^+(\delta_1)\boxtimes\RP^1$ is a codimension $k$ closed subspace of
 $\SR(\delta_1)\boxtimes \RP^1 \cap
D^\natural_\rig(s)$.
\end{proof}

\begin{thm}\label{thm:emerton}
Conjecture \ref{conj:Emerton-uni} is true for $p>2$.
\end{thm}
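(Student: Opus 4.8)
The plan is to combine the exact sequence (\ref{eq:exact-sq-proof}), its $\GL_2(\BQ_p)$-equivariance, Colmez's identification of locally analytic vectors, and the duality/orthogonality statements established in \S6.2--\S6.3, and then apply the known description of $\Pi(s)_\an$-duals via $\SR^+(\eta)\boxtimes_\delta\RP^1$. First I would record that by Colmez's formula $(\Pi(\check s)_\an)^* = D^\natural_\rig(s)\boxtimes\RP^1$, so $\Pi(\check s)_\an$ fits (by dualizing) into an exact sequence determined by the inclusion $\SR(\delta_1)\boxtimes\RP^1 \cap D^\natural_\rig(s)\boxtimes\RP^1 \subset D^\natural_\rig(s)\boxtimes\RP^1$ and the surjection $j_{\RP^1}: D^\natural_\rig(s)\boxtimes\RP^1 \to j_{\RP^1}(D^\natural_\rig(s)\boxtimes\RP^1)$. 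The content of Proposition \ref{prop:analytic-vector-0} is precisely that these two pieces are (in case (ii)) $\SR^+(\delta_1)\boxtimes\RP^1$ and $\SR^+(\delta_2)\boxtimes\RP^1$, and (in case (i)) closed subspaces related to these by codimension $k$.

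In case (ii), i.e. $\delta_s\ne x^{k-1}$ for any $k\ge2$, dualizing the short exact sequence $0\to\SR^+(\delta_1)\boxtimes\RP^1 \to D^\natural_\rig(s)\boxtimes\RP^1 \to \SR^+(\delta_2)\boxtimes\RP^1 \to 0$ and using Proposition \ref{prop:map-A} (which identifies $\SR^+(\eta)\boxtimes_\delta\RP^1$ with $\widetilde\Sigma(\eta^{-1}\epsilon,\delta^{-1}\eta)^*$) together with the perfect pairings of Proposition \ref{prop:pairing,rank 1}, I would obtain a $\GL_2(\BQ_p)$-equivariant exact sequence $0 \to \widetilde\Sigma(\delta_2,\delta_1) \to \Pi(\check s)_\an \to \widetilde\Sigma(\delta_1^{-1}\epsilon\cdot(\text{twist}),\dots) \to 0$; after the bookkeeping with $\check s = (\check\delta_2,\check\delta_1,\SL) = (\epsilon\delta_1^{-1},\epsilon\delta_2^{-1},\SL)$ and replacing $s$ by $\check s$ (legitimate since $\check{\check s}=s$ and exceptionality is preserved under Tate duality outside the $\SS_*^\cris$ flagged case), this becomes exactly (\ref{eq:exact-Emerton}) with $\Sigma(s)=\widetilde\Sigma(\delta_1,\delta_2)$, as required by (\ref{eq:Sigma(s)}) in this case. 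I would need to check that the map $\Sigma(s)\to\Pi(s)_\an$ coming out of this duality coincides with $\iota_s$; this follows by tracing through Proposition \ref{prop-comm-diag}, whose commutative square exhibits $\mathscr{A}_2\circ\iota_{\check s}^*$ as the dual of the natural inclusion.

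In case (i), $\delta_s = x^{k-1}$, the same duality argument applied to the codimension-$k$ refinements in Proposition \ref{prop:analytic-vector-0}(i) yields an exact sequence in which $\Pi(\check s)_\an$ has a subobject that is an extension of $\widetilde\Sigma(\delta_2,\delta_1)$-type pieces and a $k$-dimensional quotient, and here I would invoke Schneider--Teitelbaum's Jordan--H\"older description (Proposition \ref{prop:ST-an}(iii)) to identify the $k$-dimensional locally algebraic piece $(\delta_2\circ\det)\otimes\Sym^{k-1}L^2$ and recognize the subobject as $\Sigma(s)$ via its geometric model $C(x^{k-1},\SL)$ and the non-split extension (\ref{eq:exact-sq-Sigma(s)}). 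The injectivity of $\iota_s:\Sigma(s)\to\Pi(s)_\an$ is already Proposition \ref{prop:injn-iotas}, and matching the cokernel with $\widetilde\Sigma(\delta_2,\delta_1)$ is forced by counting Jordan--H\"older constituents once the sub- and quotient-spaces of Proposition \ref{prop:analytic-vector-0}(i) are dualized.

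The main obstacle I anticipate is the careful identification, in case (i), of the \emph{extension class}: the duality argument produces \emph{some} extension of $\widetilde\Sigma(\delta_2,\delta_1)$ by $\Sigma(s)$ sitting inside $\Pi(s)_\an$, but to conclude it equals the whole of $\Pi(s)_\an$ (rather than a proper subrepresentation) one must verify that $\Pi(s)_\an$ has no constituents beyond those accounted for. This is where I would lean on the fact that $D^\natural_\rig(s)\boxtimes\RP^1$ and $\check D^\natural_\rig(s)\boxtimes\RP^1$ are \emph{exact} orthogonal complements (Proposition \ref{prop:Drig-orthogonal}), so the dual exact sequence has no hidden terms, combined with the perfectness in Lemma \ref{lem:orthogonal} and Proposition \ref{prop:analytic-vector-0} pinning down the two flanking pieces \emph{on the nose}. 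A secondary technical point is ensuring all the identifications are topological (closed embeddings, open mapping for Fr\'echet spaces), but Lemma \ref{lem:orthogonal}'s proof already supplies the Hahn--Banach/open-mapping input, so this should be routine.
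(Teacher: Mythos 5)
Your plan is essentially the paper's own proof, and case (ii) is carried out exactly as in the paper: set $\Sigma^*=D^\natural_\rig(\check s)\boxtimes\RP^1/\SR^+(\check\delta_2)\boxtimes\RP^1$, dualize to get $0\to\Sigma\to\Pi(s)_\an\to\widetilde\Sigma(\delta_2,\delta_1)\to 0$, and when $\delta_s\ne x^{k-1}$ conclude $\Sigma\cong\widetilde\Sigma(\delta_1,\delta_2)=\Sigma(s)$ directly from Proposition \ref{prop:analytic-vector-0}(\ref{it:no-L}) and Proposition \ref{prop:map-A}.

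In case (i), however, your sketch has a genuine gap, and the concern you raise at the end is aimed at the wrong point. There is no danger of ``hidden constituents'' in $\Pi(s)_\an$: the exact sequence $0\to\Sigma\to\Pi(s)_\an\to\widetilde\Sigma(\delta_2,\delta_1)\to0$ is complete as soon as $\Sigma$ is defined as the dual of $D^\natural_\rig(\check s)\boxtimes\RP^1/\SR^+(\check\delta_2)\boxtimes\RP^1$, since Proposition \ref{prop:Drig-orthogonal} and Lemma \ref{lem:orthogonal} already supply the perfectness of the relevant pairings. What actually requires proof is the identification of the concrete subspace $\iota_s(\Sigma(s))\subset\Pi(s)_\an$ with $\Sigma$, not merely an abstract isomorphism $\Sigma\cong\Sigma(s)$ (and Conjecture \ref{conj:Emerton-uni} is phrased precisely in terms of the cokernel of the specific map $\iota_s$). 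Your proposed route---identifying the $k$-dimensional piece from Proposition \ref{prop:ST-an}(iii) and ``counting Jordan--H\"older constituents''---cannot do this: the split extension $\Sigma(\delta_1,\delta_2)\oplus(\delta_2\circ\det)\otimes\Sym^{k-1}L^2$ has the same constituents as $\Sigma(s)$, and JH-multiset considerations alone leave open the possibility that $\iota_s(\Sigma(s))$ projects nontrivially to the quotient $\widetilde\Sigma(\delta_2,\delta_1)$. The paper closes this gap in two steps that your proposal does not supply. First, $\widetilde\Sigma(\delta_2,\delta_1)$ is topologically irreducible and is not isomorphic to any topologically irreducible subquotient of $\Sigma(s)$ (both facts are read off from Proposition \ref{prop:ST-an}); hence the composite $\iota_2\circ\iota_s$ vanishes on $\Sigma(s)$, so $\iota_s(\Sigma(s))\subseteq\ker\iota_2=\iota_1(\Sigma)$. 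Second, by Proposition \ref{prop:analytic-vector-0}(\ref{it:has-L}) the subrepresentation $\Sigma$ contains $\Sigma(\delta_1,\delta_2)$ with codimension exactly $k$, and $\Sigma(s)$ also contains $\Sigma(\delta_1,\delta_2)$ with codimension $k$ by (\ref{eq:exact-sq-Sigma(s)}); combined with the injectivity of $\iota_s$ (Proposition \ref{prop:injn-iotas}), the inclusion $\iota_s(\Sigma(s))\subseteq\iota_1(\Sigma)$ is therefore an equality. Without the first step---the irreducibility/non-isomorphism input giving $\iota_2\circ\iota_s=0$---the argument in case (i) is incomplete.
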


\begin{proof}
By Proposition \ref{prop:analytic-vector-0},
$\SR^+(\check{\delta_2})\boxtimes\RP^1$
is contained in
$D^\natural_\rig(\check{s})\boxtimes \RP^1$. Let
$\Sigma$ be the locally analytic representation such that $\Sigma^*$ is isomorphic to
$D^\natural_\rig(\check{s}) \boxtimes \RP^1/
\SR^+(\check{\delta_2})\boxtimes_{\check{\delta}}\RP^1$. Since
$\SR^+(\check{\delta_2})\boxtimes\RP^1$ is isomorphic to $\widetilde{\Sigma}(\delta_2,\delta_1)^*$, we thus have
an exact sequence of locally analytic $L$-representations of
$\GL_2(\BQ_p)$
\begin{equation} \label{equation:pre-conj}
0\longrightarrow \Sigma \stackrel{\iota_1}{\longrightarrow}\Pi(s)_\an \stackrel{\iota_2}{\longrightarrow}
\widetilde{\Sigma}(\delta_2,\delta_1)\longrightarrow 0.
\end{equation}
If $\delta_s$ is not of the form $x^{k-1}$ for any $k\in\BZ_+$,
then $\Sigma^*\cong\SR^+(\check{\delta_1})\boxtimes\RP^1$ by Proposition \ref{prop:analytic-vector-0}(\ref{it:no-L}), which in turn is isomorphic to the dual of $\widetilde{\Sigma}(\delta_1,\delta_2)$.
We thus have that $\Sigma$ is isomorphic to $\widetilde{\Sigma}(\delta_1,\delta_2)=\Sigma(s)$, yielding $(\ref{eq:exact-Emerton})$ in this case. Now suppose $\delta_s=x^{k-1}$ for some integer $k\geq 1$.
Since $\widetilde{\Sigma}(\delta_2,\delta_1)$ is
topologically irreducible, and it is not isomorphic to any topological irreducible subquotients of $\Sigma(s)$ by Proposition \ref{prop:ST-an}, we deduce that $\iota_2(\iota_s(\Sigma(s)))=0$. Hence $\iota_s(\Sigma(s))\subseteq\iota_1(\Sigma)$.
On the other hand, by Proposition \ref{prop:analytic-vector-0} (\ref{it:has-L}), we see
that $\Sigma^*$ is an extension of $\Sigma(\delta_1, \delta_2)^*$ by
a $k$-dimensional $L$-vector space. Hence $\Sigma$ contains
$\Sigma(\delta_1,\delta_2)$ as a subrepresentation of codimension $k$. Since $\Sigma(\delta_1,\delta_2)$ is a subrepresentation of $\Sigma(s)$ of codimension $k$ as well, we conclude that $\iota_s(\Sigma(s)) = \iota_1(\Sigma)$.
\end{proof}
\begin{rem}\label{rem:JH-dual}
As a consequence of Theorem \ref{thm:emerton} and Proposition \ref{prop:analytic-vector-0}, we see that in case $\delta_s=x^{k-1}$ for some $k\in\BZ_+$, the dual of the quotient $\Pi(s)_\an/\Sigma(\delta_1,\delta_2)$, which is an extension of $\widetilde{\Sigma}(\delta_2,\delta_1)$ by $(\delta_2\circ\det)\otimes\Sym^{k-1}L^2$, is isomorphic to $\SR(\check{\delta}_2)\boxtimes\RP^1\cap D^\natural_\rig(\check{s})\boxtimes\RP^1$.
\end{rem}


\begin{thebibliography}{99}


\bibitem{Berger} L.Berger, {\it Repr\'esentations $p$-adiques et \'equations
diff\'erentielles,} Invent. Math. 148 (2002), no. 2, 219--284

\bibitem{Berger06} L. Berger, {\it \'Equations diff\'erentielles
  $p$-adiques et $(\varphi,N)$-modules filtr\'es}, Ast\'erisque 319 (2008), 13--38.

\bibitem{BB} L. Berger, C. Breuil, {\it Sur quelques repr\'esentations potentiellement cristallines
de $\GL_2(\BQ_p)$,} Ast\'erisque 330, 2010, 155-211.


\bibitem{Breuil-DS} C. Breuil, {\it Invariant $\calL$ et s\'erie sp\'eciale
$p$-adique,} Ann. Sci. \'Ecole Norm. Sup. (4) 37 (2004), no. 4,
559--610.

\bibitem{Breuil-com} C. Breuil, {\it S\'erie sp\'eciale $p$-adique et cohomologie \'etale
compl\'et\'ee}, Ast\'erisque 331, 2010, 65-115.



\bibitem{CC98} F. Cherbonnier, P. Colmez, {\it Repr\'esentaions $p$-adiques
surconvergentes}, Invent. Math. 133 (1998), 581-611.



\bibitem{Colmez08} P. Colmez, {\it Repr\'esentations triangulines de dimension
2,} Ast\'erisque No. 319 (2008), 213--258.


\bibitem{Colmez-function} P. Colmez, {\it Fonctions d'une variable $p$-adic},  Ast\'erisque No. 330 (2010), 13--59.

\bibitem{Treillis} P. Colmez, {\it $(\varphi,\Gamma)$-modules et repr\'esentations du mirabolique de
$\GL_2(\BQ_p)$,} Ast\'erisque No. 330 (2010), 61--153.


\bibitem{Colmez-principal} P. Colmez, {\it La s\'erie principale unitaire de
$\GL_2(\BQ_p)$,} Ast\'erisque No. 330 (2010), 213--262.

\bibitem{Colmez-Langlands} P. Colmez, {\it Repr\'esentations de $\GL_2(\BQ_p)$ et
$(\varphi,\Gamma)$-modules,} Ast\'erisque No. 330 (2010), 281--509.

\bibitem{Colmez-analytic} P. Colmez, {\it La s\'erie principale unitaire de
$\GL_2(\BQ_p)$: vecteurs localement analytiques,} preprint, 2010.

\bibitem{Emerton-lg} M. Emerton, {\it A local-global compatibility conjecture
in the $p$-adic Langlands programme for $\GL_{2/\BQ}$,} Pure and
Applied Math. Quarterly 2 (2006), no 2, 353-392.

\bibitem{Fontaine90} J.-M., Fontaine, {\it Repr\'esentations $p$-adiques des corps locaux.
I,} The Grothendieck Festschrift, Vol. II, 249--309, Progr. Math.,
87, Birkh\"auser Boston, Boston, MA, 1990.







\bibitem{Kedlaya08} K. S. Kedlaya, {\it Slope filtrations for relative
Frobenius,} Asterisque 319, (2008), 259-301.


\bibitem{Liu} R. Liu, {\it Locally analytic vectors of some crystabelian representations of $\GL_2(\BQ_p)$,} to appear in compositio.


\bibitem{P09} V. Paskunas {\it On some crystalline representations of $\GL_2(\BQ_p)$},
Algebra and Number Theory 3 (2009) no. 4, 411--421.

\bibitem{P00}B. Perrin-Riou, {\it Th\'eorie d'Iwasawa des repr\'esentations p-adiques semi-stables,} M\'em. Soc. Math. Fr. (N.S.) No. 84 (2001), vi+111pp.

\bibitem{Schneider} P. Schneider, {\it Nonarchimedean Functional
analysis,} Springer Monographs in Mathematics. Springer-Verlag,
Berlin, 2002.

\bibitem{ST-an2} P. Schneider, J. Teitelbaum, {\it $U(\mathfrak{g})$-finite locally analytic
representations,} Representation theory 5 (2001), 111-128.

\bibitem{ST-an} P. Schneider, J. Teitelbaum,
{\it Locally analytic distributions and $p$-adic representation
theory, with application to $\GL_2$}, J. Amer. Math. Soc. 15 (2002),
443-468.

\bibitem{ST-an3} P. Schneider, J. Teitelbaum, {\it Algebras of $p$-adic distributions and admissible
representations}, Invent. Math. 153 (2003), 145--196.



\end{thebibliography}
\end{document}